\documentclass[final,11pt,english]{article}

\usepackage{amsfonts,amsmath,amsthm,amscd,amssymb,latexsym,amsbsy,pb-diagram,cite,caption2}

\usepackage{ifpdf}
\ifpdf
\usepackage[pdftex]{graphicx}
\usepackage[pdftex,dvipsnames,usenames]{color}
\else
\usepackage{graphicx}
\usepackage[dvips,dvipsnames,usenames]{color}
\fi

\usepackage[mathscr]{eucal}

\usepackage[notref,notcite]{showkeys}

\newtheorem{definition}{Definition}[section]
\newtheorem{theorem}[definition]{Theorem}
\newtheorem{lemma}[definition]{Lemma}
\newtheorem{proposition}[definition]{Proposition}
\newtheorem{corollary}[definition]{Corollary}

\theoremstyle{definition}
\newtheorem{remark}[definition]{Remark}
\newtheorem{example}[definition]{Example}

\numberwithin{equation}{section}

\DeclareMathOperator{\dist}{dist}

\begin{document}

\title{\bf Tangent metric spaces to starlike\\ sets on the plane}

\author{{\bf Oleksiy Dovgoshey:$(\boxtimes)$} \\\smallskip\\  Institute of Applied Mathematics and
Mechanics of NASU,\\ R.~Luxemburg str. 74, Donetsk 83114, Ukraine
\\ {\it aleksdov@mail.ru}\\\bigskip\\
{\bf Fahreddin Abdullayev} and {\bf Mehmet K\"{u}\c{c}\"{u}kaslan:}\\\smallskip\\
Mersin University Faculty of Literature and Science,\\ Department
of Mathematics,\\ 33342 Mersin, Turkey\\{\it fabdul@mersin.edu.tr}
and {\it mkucukaslan@mersin.edu.tr}}

\date{}

\maketitle
\begin{abstract}
Let $A\subseteq\mathbb C$ be a starlike set with a center $a$. We
prove that every tangent space to $A$ at the point $a$ is
isometric to the smallest closed cone,
 with the vertex $a$, which includes $A$. A partial converse to this result is
obtained. The tangent space to convex sets is also discussed.
 \\\\
{\bf Key words:} Metric spaces; Tangent spaces to metric spaces;
Tangent space to convex sets; Tangent space to starlike sets.\\\\
{\bf 2000 Mathematic Subject Classification:} 54E35
\end{abstract}

\section{Introduction and  main results}

Analysis on metric spaces with no a priory smooth structure has
been rapidly developed resently. This development is closely
related to some generalizations of the differentiability.
Important examples of such generalizations and even an axiomatics
of so-called ``pseudo-gradients'' can be found in
\cite{Am,AmKi1,AmKi2,Ch,Ha,He,HeKo,Sh} and respectively in
\cite{Am1}. In almost all above-mentioned books and papers the
generalized differentiations involve an induced linear structure
 that makes
possible to use the classical differentiations in the linear
normed spaces. A new {\it intrinsic} approach to the introduction
of the ``smooth'' structure by means of the construction of
``tangent spaces'' for general metric spaces was proposed by
O.~Martio and by the first author of the present paper in
\cite{DM}.

In the present paper we prove that for every starlike set
$A\subseteq\mathbb C$ with a center $a$ all tangent spaces to $A$
at the point $a$ are isometric to the smallest closed cone  which
includes $A$  and has the vertex $a$. A partial converse to this
result is also obtained. Important particular cases $A=\mathbb R,\
A=\mathbb R^+$ and $A=\mathbb C$ are considered in details. The
results of the paper were partly published in the preprint from
\cite{DAK1}.

For convenience we recall the main notions from
\cite{DM,DAK1,Dov}.

Let $(X,d)$ be a metric space and let $a$ be a point of $X$. Fix a
sequence $\tilde r$ of positive real numbers $r_n$ which tend to
zero. In what follows this sequence $\tilde r$ is called a
\textit{normalizing sequence}. Let us denote by  ${\tilde X}$ the
set of all sequences of points from $X$.

\begin{definition}
\label{1:d1.1} Two sequences $\tilde{x},\tilde{y}\in \tilde X$,
$\tilde x=\{ x_n\}_{n\in \mathbb{N}}$ and $\tilde y=\{ y_n\}_{n\in
\mathbb{N}}$,  are mutually stable (with respect to a normalizing
sequence $\tilde r=\{ r_n\}_{n\in \mathbb{N}}$) if there is a
finite limit
\begin{equation}  \label{1:eq1.1}
\lim_{n\to\infty} \frac{d(x_n, y_n)}{r_n} := \tilde d_{\tilde
r}(\tilde x, \tilde y)=\tilde d(\tilde x, \tilde y).
\end{equation}
\end{definition}

We shall say that a family $\tilde F\subseteq \tilde X$ is
\textit{self-stable} (w.r.t.  $\tilde r$) if every two $\tilde
x,\tilde y \in \tilde F$ are mutually stable. A family $\tilde F
\subseteq\tilde X$ is \emph{maximal self-stable} if $\tilde{F}$ is
self-stable and for an arbitrary $\tilde z\in \tilde X$ either
$\tilde z\in \tilde F$ or there is $\tilde x\in \tilde F$ such
that $\tilde x$ and $\tilde z$ are not mutually stable.

A standard application of Zorn's Lemma leads to the following

\begin{proposition}
\label{1:p1.2} Let $(X, d)$ be a metric space and let $a\in X$.
Then for every normalizing sequence $\tilde r=\{ r_n\}_{n\in
\mathbb{N}}$ there exists a maximal self-stable family $\tilde
X_a=\tilde X_{a, \tilde r}$ such that $\tilde a:=\{ a, a,
\dots\}\in \tilde X_a$.
\end{proposition}

Note that the condition $\tilde a \in \tilde X_a$ implies the
equality
\begin{equation*}
\lim_{n\to\infty} d(x_n, a)=0
\end{equation*}
for every $\tilde x=\{ x_n\}_{n\in \mathbb{N}}\in\tilde X_a$.

Consider a function $\tilde d : \tilde X_a\times \tilde X_a \to
\mathbb{R}$ where $\tilde d(\tilde x, \tilde y)=\tilde d_{\tilde
r}(\tilde x, \tilde y)$ is defined by \eqref{1:eq1.1}. Obviously,
$\tilde d$ is symmetric and nonnegative. Moreover, the triangle
inequality for the original metric $d$ implies
\begin{equation*}
\tilde d (\tilde x, \tilde y) \leq \tilde d(\tilde x, \tilde
z)+\tilde d(\tilde z, \tilde y)
\end{equation*}
for all $\tilde x, \tilde y, \tilde z\in\tilde X_a$. Hence
$(\tilde X_a, \tilde d)$ is a pseudometric space.

\begin{definition}
\label{1:d1.3} The pretangent space to the space $X$ at the point
$a$ w.r.t.  $\tilde r$ is the metric identification of the
pseudometric space $(\tilde X_{a, \tilde r}, \tilde d)$.
\end{definition}

Since the notion of pretangent space is basic for the present
paper, we remind this metric identification construction.

Define a relation $\sim$ on $\tilde X_a$ by $\tilde x \sim \tilde
y$ if and only if $\tilde d(\tilde x, \tilde y)=0$. Then $\sim $
is an equivalence relation. Let us denote by
$\Omega_{a}=\Omega_{a, \tilde r}=\Omega_{a,\tilde r}^X$ the set of
equivalence classes in $\tilde X_a$ under the equivalence relation
$\sim$. It follows from general properties of pseudometric spaces,
see, for example, \cite[Chapter 4, Th.~15]{Kell}, that if $\rho$
is defined on $\Omega_a$ by
\begin{equation}  \label{1:eq1.2}
\rho(\alpha, \beta) :=\tilde d(\tilde x, \tilde y)
\end{equation}
where $\tilde x\in \alpha$ and $\tilde y \in \beta$, then $\rho$
is the well-defined metric on $\Omega_a$. The metric
identification of $(\tilde X_a, \tilde d)$ is, by definition, the
metric space $(\Omega_a, \rho)$.

Remark that $\Omega_{a, \tilde r} \neq\emptyset$  because the
constant sequence $\tilde a$ belongs to $\tilde X_{a,\tilde r}$,
see Proposition \ref{1:p1.2}.

Let $\{n_k\}_{k\in \mathbb{N}}$ be an infinite, strictly
increasing sequence of natural numbers. Let us denote by $\tilde
r^{\prime}$ a subsequence $\{ r_{n_k}\}_{k\in\mathbb{N}}$ of the
normalizing sequence $\tilde r=\{ r_n\}_{n\in \mathbb{N}}$ and let
$\tilde x^{\prime}:=\{ x_{n_k}\}_{k\in \mathbb{N}}$ for every
$\tilde x=\{x_n\}_{n\in \mathbb{N}}\in\tilde{X}$.  It is clear
that if $\tilde x$ and $\tilde y $ are mutually stable w.r.t.
$\tilde r$, then $\tilde x^{\prime}$ and $\tilde y^{\prime}$ are
mutually stable w.r.t. $\tilde r^{\prime}$ and
\begin{equation}  \label{1:eq1.3}
\tilde d_{\tilde r} (\tilde x, \tilde y)=\tilde d_{\tilde
r^{\prime}}( \tilde x^{\prime},\tilde y^{\prime}).
\end{equation}
If $\tilde X_{a, \tilde r}$ is a maximal self-stable (w.r.t.
$\tilde r$) family, then, by Zorn's Lemma, there exists a maximal
self-stable (w.r.t. $\tilde r^{\prime}$) family $\tilde X_{a,
\tilde r^{\prime}}$ such that
\begin{equation}\label{1.3"}
\{ \tilde x^{\prime}:\tilde x \in \tilde X_{a, \tilde
r}\}\subseteq \tilde X_{a, \tilde r^{\prime}}.
\end{equation}
Denote by $\mathrm{in}_{\tilde r^{\prime}}$ the mapping from
$\tilde X_{a, \tilde r}$ to $\tilde X_{a, \tilde r^{\prime}}$ with
$\mathrm{in}_{\tilde r^{\prime}}(\tilde x)=\tilde x^{\prime}$ for
all $\tilde x\in \tilde X_{a, \tilde r}$. If follows from
\eqref{1:eq1.3} that after the metric identifications
$\mathrm{in}_{\tilde r^{\prime}}$ passes to an isometric embedding
$\mathrm{em}^{\prime}$: $\Omega_{a, \tilde r}\to \Omega_{a, \tilde
r^{\prime}}$ under which the diagram
\begin{equation}  \label{1:eq1.4}
\begin{array}{ccc}
\tilde X_{a, \tilde r} & \xrightarrow{\ \ \mbox{in}_{\tilde r'}\ \
} &
\tilde X_{a, \tilde r^{\prime}} \\
\!\! \!\! \!\! \!\! \! p\Bigg\downarrow &  & \! \!\Bigg\downarrow
p^{\prime}
\\
\Omega_{a, \tilde r} & \xrightarrow{\ \ \mbox{em}'\ \ \ } &
\Omega_{a, \tilde r^{\prime}}
\end{array}
\end{equation}
is commutative. Here $p$ and $p^{\prime}$ are metric
identification mappings, $p(\tilde x):=\{\tilde y\in \tilde X_{a,
\tilde r}:\tilde d_{\tilde r}(\tilde x,\tilde y)=0\}$,
$p^{\prime}(\tilde x):=\{ \tilde y\in\tilde X_{a,\tilde
r^{\prime}}:\tilde d_{\tilde r^{\prime}}(\tilde x,\tilde y)=0\}$.

Let $X$ and $Y$ be two metric spaces. Recall that a map $f:X\to Y$
is called an \textit{isometry} if $f$ is distance-preserving and
onto.

\begin{definition}
\label{1:d1.4} A pretangent $\Omega_{a, \tilde r}$ is tangent if
$\mathnormal{em}^{\prime}$: $\Omega_{a, \tilde r}\to\Omega_{a,
\tilde r^{\prime}}$ is an isometry for every $\tilde r^{\prime}$.
\end{definition}
Note that the property to be tangent does not depend on the choice
of $\tilde X_{a,\tilde r'}$ in \eqref{1.3"}, see Proposition
\ref{1:p1.5} in the present paper.

Let $X$ be a metric space with a marked point $a,$ $%
\tilde{r}$ a normalizing sequence, $\tilde{X}_{a,\tilde{r}}$ a
maximal self-stable family and $\Omega _{a,\tilde{r}}$ the
corresponding pretangent space.
\begin{definition}\label{8:d2.14}
 The pretangent space
$\Omega_{a,\tilde r}$ lies in a tangent space if there is a
maximal self-stable family $\tilde{X}_{a,\tilde r'}$ such that
\eqref{1.3"} holds and if $\Omega _{a,\tilde{r}^{\prime }},$ the
metric identification of $\ \tilde{X}_{a,\tilde r'}$, is tangent.
\end{definition}
Let $(X,d)$ be a metric space  with a marked point $a$, let $Y$
and $Z$ be subspaces of $X$ such that $a\in Y\cap Z$ and let
$\tilde r=\{r_n\}_{n\in\mathbb N}$ be a normalizing sequence.
\begin{definition}\label{6:d5.1}
The subspaces $Y$ and $Z$ are {\it tangent equivalent} at the
point $a$ w.r.t.  $\tilde r$ if for every $\tilde
y_1=\{y_n^{(1)}\}_{n\in\mathbb N}\in\tilde Y$ and for every
$\tilde z_1=\{z_n^{(1)}\}_{n\in\mathbb N}\in\tilde Z$ with finite
limits
$$
\tilde d_{\tilde r}(\tilde a, \tilde
y_1)=\lim_{n\to\infty}\frac{d(y_n^{(1)},a)}{r_n}\quad\text{and}\quad
\tilde d_{\tilde r}(\tilde a, \tilde
z_1)=\lim_{n\to\infty}\frac{d(z_n^{(1)},a)}{r_n}
$$
there exist $\tilde y_2=\{y_n^{(2)}\}_{n\in\mathbb N}\in\tilde Y$
and $\tilde z_2=\{z_n^{(2)}\}_{n\in\mathbb N}\in\tilde Z$ such
that
$$
\lim_{n\to\infty}\frac{d(y_n^{(1)},z_n^{(2)})}{r_n}=\lim_{n\to\infty}
\frac{d(y_n^{(2)},z_n^{(1)})}{r_n}=0.
$$
\end{definition}
We shall say that $Y$ and $Z$ are {\it strongly tangent
equivalent} at $a$ if $Y$ and $Z$ are tangent equivalent at $a$
for all normalizing sequences $\tilde r$.

Let $A$ be a  set in a linear topological space $X$ over
$\mathbb{R}$. The set $A$ is termed starlike with a center $a$ if
$$
[a,b]=\{x\in X:x=a+t(b-a),\quad t\in[0,1]\}\subseteq A
$$
for all $b\in A$. Moreover, $A$ is a \textit{cone } with the {\it
vertex} $a$ if the ray
\begin{equation}
l_{a}(b):=\left\{ x\in X:x=a+t(b-a),t\in \mathbb{R} ^{+}\right\}
\label{8:eq22.34}
\end{equation}%
lies in $A$ for every $ b\in A$.  For  nonvoid sets
$X\subseteq\mathbb C$ and  $a\in X$ define  $Con_{a}X\ (Conv_aX)$
as the intersection of all closed (closed convex)  cones
$A\supseteq X$ with the common vertex $a$.

Now we are ready to formulate the first result of the   paper.
\begin{theorem}\label{t:1.7}
Let $X\subseteq\mathbb C$ be a set with a marked point $a$. If $X$
is  starlike  with the center $a$, then for each tangent space
$\Omega_{a,\tilde r}^X$ there is an isometry
$$
\psi:\Omega_{a,\tilde r}^X\to Con_a(X),\qquad \psi(\alpha)=a,
$$
where $\alpha=p(\tilde a)$, see \eqref{1:eq1.4}, and, moreover,
every pretangent space $\Omega_{a,\tilde r}^X$ lies in some
tangent space $\Omega_{a,\tilde r'}^X$.
\end{theorem}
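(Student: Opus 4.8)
The plan is to work with $a=0$ (translating if necessary) and the Euclidean metric $d=|\cdot|$, and to set $C:=Con_0 X$, which is a closed cone satisfying $tC=C$ for $t>0$ and containing $X$. First I would show that \emph{every} pretangent space embeds isometrically into $C$. From self-stability of the family, together with stability against the constant sequence $\tilde a$ (which gives $|x_n|/r_n\to\rho_x$), polarizing $|x_n-y_n|^2$ shows that the limits $g_{xy}:=\lim_n\langle x_n,y_n\rangle/r_n^2$ all exist and assemble into a positive semidefinite form of rank $\le 2$ on the family (each finite section is a limit of genuine Gram matrices in $\mathbb R^2$). Realizing this form linearly in $\mathbb R^2=\mathbb C$ yields an isometry of $\Omega_{0,\tilde r}^X$ onto a subset of $\mathbb C$ with $\alpha\mapsto 0$. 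To locate the image inside $C$, note that along any subsequence on which finitely many $u_n:=x_n/r_n$ converge, their limits lie in the closed cone $C$ (since $u_n\in\frac1{r_n}X\subseteq C$) and realize the same Gram data; composing with the orthogonal map matching the two realizations produces an isometric embedding $\psi_0:\Omega_{0,\tilde r}^X\hookrightarrow C$.

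The geometric heart—and the step I expect to be the main obstacle—is a radial approximation lemma: \emph{for every $w\in C$ there is a sequence $y^w=\{y^w_n\}$ with $y^w_n\in X$ and $y^w_n/r_n\to w$.} Writing points by modulus and argument and letting $f(\theta):=\sup\{r:re^{i\theta}\in X\}$ be the starlike radial gauge, $C$ consists exactly of the closed set of directions $\overline{\{f>0\}}$. Given $w=\rho e^{i\theta_0}$ with $\theta_0\in\overline{\{f>0\}}$, I would choose $\theta_n\to\theta_0$ with $f(\theta_n)>\rho r_n$ and put $y^w_n:=\rho r_n e^{i\theta_n}\in X$, so that $|y^w_n-r_n w|=\rho r_n\,|e^{i\theta_n}-e^{i\theta_0}|=o(r_n)$. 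The delicate case is $\theta_0$ on the boundary of the direction set, where $f$ may collapse to $0$; it still goes through because only $\theta_n\to\theta_0$ is required (no lower bound on the modulus), and the needed modulus $\rho r_n\to 0$ so every neighborhood of $\theta_0$ eventually admits points of $X$ of that small modulus. Crucially, the family $\{y^w\}_{w\in C}$ is self-stable with $\tilde d(y^w,y^{w'})=|w-w'|$, and all of this survives passage to any subsequence $\tilde r'$.

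Next I would build a surjective tangent space and simultaneously settle the clause ``$\Omega_{0,\tilde r}^X$ lies in a tangent space''. Fix $\tilde r$ and a maximal family; by Step~1 the pretangent is separable, so I choose a subsequence $\tilde r'$ along which the directions $u_n$ of a countable dense subfamily converge, and by Zorn (Proposition~\ref{1:p1.2}) take a maximal self-stable $\tilde X_{0,\tilde r'}$ containing both the images under $\mathrm{in}_{\tilde r'}$ and all $y^w$, $w\in C$. Since $\psi_0(p'(y^w))=w$, the map $\psi_0:\Omega_{0,\tilde r'}\to C$ is onto, hence by Step~1 an isometry. For any further subsequence $\tilde r''$ the sequences $y^w{}''$ persist in $\tilde X_{0,\tilde r''}$ and still satisfy $y^w_{n_k}/r_{n_k}\to w$, so $\psi_0$ again identifies $\Omega_{0,\tilde r''}$ with $C$ and $\mathrm{em}''$ corresponds to $\mathrm{id}_C$; thus $\Omega_{0,\tilde r'}$ is tangent in the sense of Definition~\ref{1:d1.4}. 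As $\mathrm{em}'$ embeds $\Omega_{0,\tilde r}$ into this tangent $\Omega_{0,\tilde r'}$, the given pretangent lies in a tangent space (Definition~\ref{8:d2.14}).

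Finally, suppose $\Omega_{0,\tilde r}^X$ is itself tangent. Applying Definition~\ref{1:d1.4} to the subsequence $\tilde r'$ just constructed makes $\mathrm{em}'$ an isometry \emph{onto} $\Omega_{0,\tilde r'}\cong C$, so $\psi:=\psi_0\circ\mathrm{em}'$ is an isometry $\Omega_{0,\tilde r}^X\to C=Con_0 X$ with $\psi(\alpha)=\psi_0(\mathrm{em}'(\alpha))=0$; undoing the translation gives $\psi(\alpha)=a$ for general $a$. In this scheme the linear-algebra embedding and the diagonal/separability bookkeeping are routine; the real content is the radial approximation lemma—that a starlike set fills its cone at vanishing scale even along directions where its radial gauge degenerates—together with the persistence of the realizing sequences $y^w$ under subsequencing, which is precisely what tames the argument-oscillation that would otherwise block surjectivity.
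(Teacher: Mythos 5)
Your proposal is correct in substance, but it takes a genuinely different route from the paper's. The paper factors everything through the machinery of tangent-equivalent subspaces: Lemma \ref{l:4.6} shows a starlike set and $Con_a(X)$ are \emph{strongly tangent equivalent} (via the uniform criterion $\lim_{t\to0}\varepsilon_a(t)/t=0$ of Proposition \ref{6:t5.4}, proved with a finite net on the unit sphere of the cone), then Propositions \ref{p:2.4} and \ref{6:p5.2} transport both the ``lies in a tangent space'' clause and the isometry from the cone back to $X$; the model case for closed cones (Lemmas \ref{8:l2.16}, \ref{8:l2.17}, Proposition \ref{8:p2.18}) obtains convergence of the rescalings $z_n/r_n$ by Cramer's rule on cosine relations, assuming a nondegenerate triangle in the family, with the collinear and one-point configurations treated separately via Proposition \ref{4:p2.4}. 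You instead never leave $X$: your radial approximation lemma ($y^w_n\in X$, $y^w_n/r_n\to w$ for every $w\in Con_a X$) is the pointwise shadow of Lemma \ref{l:4.6}, and your gauge-function argument for it is correct even at boundary directions, exactly because the required modulus $\rho r_n\to 0$; adjoining the entire family $\{y^w\}_{w\in C}$ to a diagonally refined maximal family then replaces both the transfer machinery and Lemma \ref{8:l2.17} and makes surjectivity of the tangent space canonical, while the rank-$\le 2$ Gram-form embedding handles the degenerate collinear cases uniformly instead of by separate inspection. What each buys: the paper's strong tangent equivalence is a reusable uniform statement (it is what Theorem \ref{t:1.11} runs on), whereas your construction is more self-contained and more elementary. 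Two steps you wave at do each need a line in a full writeup: (i) self-stability of the union of the in-mapped family with $\{y^w\}$ requires an $\varepsilon/3$ argument off the dense subfamily whose rescalings you made convergent, to see that \emph{every} in-mapped sequence has convergent rescaling along $\tilde r'$; and (ii) the normalization $\psi_0(p'(y^w))=w$ (equivalently, that $\mathrm{em}''$ acts as $\mathrm{id}_C$) silently uses that a distance-preserving self-map of $C$ fixing $0$ is onto --- true, since it extends to an orthogonal map whose induced circle isometry maps the closed direction set of $C$ into itself and hence onto it (by involutivity or periodicity for reflections and rational rotations, by minimality of irrational rotations otherwise, and by inspection when $C$ lies in a line), but this rigidity is a genuine small lemma; it can be avoided by noting that mutual stability with three noncollinear $y^w$'s already forces $z_n/r_n$ to converge to a point of the closed cone, which is precisely the trilateration the paper performs inside Lemma \ref{8:l2.16}.
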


This theorem can be rewritten in a slightly more general form.
\begin{theorem}\label{t:1.11}
Let $X\subseteq\mathbb C$ be a set with a marked point $a$.
Suppose that $X$ is strongly tangent equivalent (at the point $a$)
to a starlike set with the center $a$. Then all pretangent spaces
to $X$ at the point $a$ lie in tangent spaces and there is a
closed cone $B\subseteq\mathbb C$ with a vertex $b$ such that for
every tangent space $\Omega_{a,\tilde r}^X$ there exists an
isometry $\psi:\Omega^X_{a,\tilde r}\to B,\ \psi(\alpha)=b$, where
$\alpha=p(\tilde a)$, see \eqref{1:eq1.4}.
\end{theorem}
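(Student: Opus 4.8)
The plan is to reduce Theorem~\ref{t:1.11} to Theorem~\ref{t:1.7} by transporting the tangential structure along the strong tangent equivalence. I would let $S\subseteq\mathbb C$ be the starlike set with center $a$ to which $X$ is strongly tangent equivalent, and set $B:=Con_a(S)$, $b:=a$; by Theorem~\ref{t:1.7}, $B$ is then a closed cone with vertex $a$ and every tangent space $\Omega_{a,\tilde r}^S$ is isometric to $B$ via an isometry carrying $p(\tilde a)$ to $a$. The heart of the argument is the claim that if $X$ and $S$ are tangent equivalent at $a$ w.r.t.\ a \emph{fixed} normalizing sequence $\tilde r$, then the pretangent spaces $\Omega_{a,\tilde r}^X$ and $\Omega_{a,\tilde r}^S$ are isometric through a map $\Phi_{\tilde r}$ with $\Phi_{\tilde r}(p(\tilde a))=p(\tilde a)$.

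To build $\Phi_{\tilde r}$ I would fix maximal self-stable families $\tilde X_{a,\tilde r}$ and $\tilde S_{a,\tilde r}$. Given $\tilde x\in\tilde X_{a,\tilde r}$, applying Definition~\ref{6:d5.1} with $\tilde y_1=\tilde x$ and $\tilde z_1=\tilde a\in\tilde S$ produces $\tilde s=\{s_n\}\in\tilde S$ with $\lim_{n\to\infty} d(x_n,s_n)/r_n=0$. The triangle inequality then gives
\[
\Bigl|\frac{d(s_n,s_n')}{r_n}-\frac{d(x_n,x_n')}{r_n}\Bigr|\le\frac{d(x_n,s_n)}{r_n}+\frac{d(x_n',s_n')}{r_n}\longrightarrow 0,
\]
so $\tilde x\mapsto\tilde s$ sends $\tilde X_{a,\tilde r}$ to a self-stable family with $\tilde d(\tilde s,\tilde s')=\tilde d(\tilde x,\tilde x')$; after metric identification and an embedding of this family into $\tilde S_{a,\tilde r}$ by Zorn's Lemma, one obtains a distance-preserving $\Phi_{\tilde r}\colon\Omega_{a,\tilde r}^X\to\Omega_{a,\tilde r}^S$ fixing $p(\tilde a)$. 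Surjectivity is where maximality enters: for $\tilde s\in\tilde S_{a,\tilde r}$, tangent equivalence in the other direction yields $\tilde x\in\tilde X$ with $\lim d(x_n,s_n)/r_n=0$; the same estimate shows $\tilde x$ is mutually stable with every member of $\tilde X_{a,\tilde r}$, whence $\tilde x\in\tilde X_{a,\tilde r}$ by maximality, and its image lies at zero distance from $p(\tilde s)$. Thus $\Phi_{\tilde r}$ is an isometry.

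It then remains to transfer the properties ``tangent'' and ``lies in a tangent space'' across $\Phi$, and this is where strong tangent equivalence is essential: it guarantees that $X$ and $S$ are tangent equivalent w.r.t.\ every subsequence $\tilde r'$ of $\tilde r$, so $\Phi_{\tilde r'}$ is defined as well. Since a subsequence of a sequence tending to $0$ again tends to $0$, a partner $\tilde s$ of $\tilde x$ has the subsequence $\tilde s'$ as a partner of $\tilde x'$; hence $\Phi$ commutes with passage to subsequences, i.e.\ $\mathrm{em}'_S\circ\Phi_{\tilde r}=\Phi_{\tilde r'}\circ\mathrm{em}'_X$ in the notation of \eqref{1:eq1.4}. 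As $\Phi_{\tilde r}$ and $\Phi_{\tilde r'}$ are isometries, $\mathrm{em}'_X$ is an isometry exactly when $\mathrm{em}'_S$ is; therefore $\Omega_{a,\tilde r}^X$ is tangent iff $\Omega_{a,\tilde r}^S$ is, and lies in a tangent space iff $\Omega_{a,\tilde r}^S$ does. By Theorem~\ref{t:1.7} every pretangent space of the starlike $S$ lies in a tangent space, so the same holds for $X$; and whenever $\Omega_{a,\tilde r}^X$ is tangent, composing $\Phi_{\tilde r}$ with the isometry $\psi_S\colon\Omega_{a,\tilde r}^S\to B$ of Theorem~\ref{t:1.7} produces the required $\psi\colon\Omega_{a,\tilde r}^X\to B$ with $\psi(\alpha)=b=a$.

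I expect the main obstacle to be the compatibility of $\Phi$ with subsequences, i.e.\ the commutative square above, since the correspondence $\tilde x\mapsto\tilde s$ and the embeddings into maximal self-stable families rest on Zorn's-Lemma choices; the delicate point is to check either that these choices can be arranged coherently for $\tilde r$ and $\tilde r'$ at once or, invoking Proposition~\ref{1:p1.5}, that the tangency conclusion is independent of them. Everything else reduces to the routine triangle-inequality estimates indicated above.
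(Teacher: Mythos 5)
Your proposal is correct and takes essentially the same route as the paper: the paper proves Theorem~\ref{t:1.11} by repeating the proof of Theorem~\ref{t:4.1} with the starlike set in place of $Con_a(Y)$ and Theorem~\ref{t:4.1} in place of Proposition~\ref{8:p2.18}, i.e., by transporting the conclusions of Theorem~\ref{t:1.7} across the strong tangent equivalence exactly as you do, with $B=Con_a(S)$ and $b=a$. The transfer machinery you rebuild by hand --- your isometry $\Phi_{\tilde r}$ and its compatibility with passage to subsequences, which you flag as the main obstacle --- is precisely Proposition~\ref{6:p5.2}(ii) together with Lemma~\ref{l:2.3} and Proposition~\ref{p:2.4} of the paper (where the coherence of the Zorn's-Lemma choices is handled by taking $\tilde X_{a,\tilde r'}:=[\tilde S_{a,\tilde r'}]_{X,\tilde r'}$ and using the splicing construction \eqref{eq2.8}, for which strong tangent equivalence is exactly what is needed), so your ``delicate point'' is already settled by results the paper quotes.
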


Theorem \ref{t:1.7} admits a partial converse.

Let $l=l_a(b)$ be a ray with a vertex $a$, let $X\subseteq\mathbb
C,\ a\in X$ and let $\beta>0$. Consider the two-sided angular
sector
\begin{equation}\label{1.5*}
\Gamma(a,l,\beta):=\{z\in\mathbb C:\dist(z,l)\leq\beta|z-a|\}
\end{equation}
where, as usual,
$$
\dist(z,l)=\inf_{w\in l}|z-w|.
$$
Write
\begin{equation}\label{eq1.6}
R(X,l,\beta):=\{|z-a|:z\in X\cap\Gamma(a,l,\beta)\},
\end{equation}
i.e., a positive number $t$ belongs to $R(X,l,\beta)$ if and only
if the sphere $S(a,t)=\{z\in X:|z-a|=t\}$ with the center $a$ and
the radius $t$ and the sector $\Gamma(a,l,\beta)$ have a nonvoid
intersection. In what follows we will use a porosity of the set
$R(X,l,\beta)$, so recall a definition.
\begin{definition}
\label{9:d1} Let $A\subseteq\mathbb{R}$ and let $x\in A$. The
right-side porosity of $A$ at the point $x$ is the quantity
\begin{equation}
p(A):=\limsup_{h\rightarrow 0}\frac{l(x,h,A)}{h} \label{9:eq1}
\end{equation}%
where $l(x,h,A)$ is the length of the longest interval in $\left[ x,x+h%
\right] \backslash A.$
\end{definition}
\begin{theorem}\label{t:1.9}
Let $X\subseteq\mathbb C$ be a set with a marked point $a$.
Suppose that all pretangent spaces to $X$ at the point $a$ lie in
tangent spaces and there is a closed cone $B\subseteq\mathbb C$
with a vertex $b$ such that for every tangent space
$\Omega_{a,\tilde r}^X$ there exists an isometry
$\psi:\Omega_{a,\tilde t}^X\to B,\ \psi(\alpha)=b$, where
$\alpha=p(\tilde a)$, see \eqref{1:eq1.4}.

Then for every ray $l$ with the vertex $a$ we have either
\begin{equation}\label{eq1.9}
\lim_{\beta\to0}p(R(X,l,\beta))=0 \quad \text{or} \quad
\lim_{\beta\to0}p(R(X,l,\beta))=1.
\end{equation}
 where $p(R(X,l,\beta))$ is the right-side porosity of
$R(X,l,\beta)$ at the point $0$.
\end{theorem}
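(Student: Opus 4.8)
The plan is to first observe that the limit in \eqref{eq1.9} always exists, and then to exclude the intermediate values. Since $\beta_1<\beta_2$ gives $\Gamma(a,l,\beta_1)\subseteq\Gamma(a,l,\beta_2)$, hence $R(X,l,\beta_1)\subseteq R(X,l,\beta_2)$, enlarging $\beta$ can only shrink the complementary gaps, so $\beta\mapsto p(R(X,l,\beta))$ is nonincreasing and $L:=\lim_{\beta\to0}p(R(X,l,\beta))=\sup_{\beta>0}p(R(X,l,\beta))\in[0,1]$ exists. It therefore suffices to assume $0<L<1$ and to contradict the hypothesis that every tangent space is isometric to the fixed cone $B$. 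I will use two elementary consequences of $0<L<1$. Since $p(R(X,l,\beta))\le L<1$ for every $\beta$, each set $R(X,l,\beta)$ has $0$ as an accumulation point and all its sufficiently small-scale gaps $(c,d)$ have ratio $d/c$ bounded by some $\Lambda<\infty$; since $p(R(X,l,\beta))\to L>0$, every sufficiently thin sector carries gaps $(c_k,d_k)$ with $d_k\to0$ and $d_k/c_k\ge\lambda$ for a fixed $\lambda>1$.

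Fix two widths $\beta_0<\beta_1$ small enough that both $R(X,l,\beta_0)$ and $R(X,l,\beta_1)$ have positive porosity. In the \emph{larger} sector choose gaps $(c_k,d_k)\subseteq\mathbb R^+\setminus R(X,l,\beta_1)$ with $d_k\to0$ and $d_k/c_k\ge\lambda>1$; crucially, because $R(X,l,\beta)\subseteq R(X,l,\beta_1)$ whenever $\beta\le\beta_1$, each interval $(c_k,d_k)$ is also free of points of $R(X,l,\beta)$ for \emph{all} smaller $\beta$. In the \emph{smaller} sector I use that gaps have ratio at most $\Lambda$ to pick, for each $k$, a point $w_k\in X\cap\Gamma(a,l,\beta_0)$ with $\rho_k:=|w_k-a|\in[d_k,\Lambda d_k]$. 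Passing to a subsequence I may assume that $\rho_k/d_k$ and $c_k/d_k$ converge, so that the normalized gaps $(c_k/\rho_k,d_k/\rho_k)$ converge to a nondegenerate subinterval $J\subseteq(0,1)$ (nondegenerate since $c_k/d_k\le1/\lambda<1$ and $\rho_k\le\Lambda d_k$). Taking $\tilde r=\{\rho_k\}$ and invoking the hypothesis that every pretangent space lies in a tangent space, I obtain, after a further subsequence, a tangent space isometric via $\psi$ to $B$ and containing the class $u:=p(\tilde w)$, for which $\rho(\alpha,u)=\lim|w_k-a|/\rho_k=1$.

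Now I exploit that $B$ is a cone. As $\psi(u)\ne b$, the ray from $b$ through $\psi(u)$ lies in $B$, and for each fixed $s\in(0,1)$ the point $b+s(\psi(u)-b)\in B$ pulls back to $v_s:=\psi^{-1}(b+s(\psi(u)-b))$ with $\rho(\alpha,v_s)=s$ and $\rho(u,v_s)=1-s$. Choosing a representative $\tilde z=\{z_k\}$ of $v_s$, these read $|z_k-a|/\rho_k\to s$ and $|z_k-w_k|/\rho_k\to1-s$, and since the limits sum to $1=\lim|w_k-a|/\rho_k$ the triangle inequality is asymptotically an equality; hence $z_k$ lies within $o(\rho_k)$ of the segment $[a,w_k]$, so for large $k$ one has $|z_k-a|=s\rho_k+o(\rho_k)$ and $\dist(z_k,l)\le\beta_0|z_k-a|+o(\rho_k)\le\beta_1|z_k-a|$. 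Finally, choosing $s$ in the interior of the limiting gap $J$ gives $s\rho_k\in(c_k,d_k)$ with a margin of order $\rho_k$, whence $|z_k-a|\in(c_k,d_k)$ while $z_k\in X\cap\Gamma(a,l,\beta_1)$. This contradicts $(c_k,d_k)\cap R(X,l,\beta_1)=\varnothing$, and the contradiction proves the dichotomy.

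The main obstacle is precisely the interaction of the two sector widths. The points $z_k$ furnished by the cone structure lie only \emph{approximately} on $[a,w_k]$, so they fall into a sector slightly wider than the one containing $w_k$, and a naive single-sector argument therefore breaks down. The resolution is to place the forbidden gap in the \emph{larger} sector $\beta_1$, where emptiness is automatically inherited by all smaller sectors, while placing the source points $w_k$ in the \emph{smaller} sector $\beta_0$, and to use the uniform bound $\Lambda$ on gap ratios (coming from $L<1$) to put $\rho_k$ at the scale of $d_k$. This interplay relies essentially on the monotonicity of $p(R(X,l,\beta))$ in $\beta$ established at the outset.
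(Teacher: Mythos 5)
Your proof is correct, and while it follows the paper's overall strategy---assume $0<\lim_{\beta\to0}p(R(X,l,\beta))<1$, build a normalizing sequence out of porosity gaps, and use the cone isometry of a tangent space to manufacture a point of $X$ whose distance from $a$ lands in a forbidden gap---the implementation is genuinely different and in several respects leaner. The paper fixes a single width $\beta_0$ for the gap $]r_n,t_n[\,\subseteq[0,h_n]\setminus R(X,l,\beta_0)$ and takes its marker points $x_n$ in \emph{shrinking} sectors $\Gamma(a,l,\beta_n)$, $\beta_n\to0$, at the extremal distance $\tau_n$ \emph{below} the gap; attaining this supremum forces the preliminary reduction to closed $X$ (via Corollary \ref{c:2.5*} and Lemma \ref{l:4.6*}), the two-sided estimates \eqref{a16}--\eqref{a31} with the delicate choice $k>\frac12(\gamma_0+1)$ in \eqref{a25}, and then a point \emph{beyond} the marker whose membership in $\Gamma(a,l,\beta_0)$ is secured by the betweenness Lemma \ref{l:4.7}. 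You instead use two fixed widths $\beta_0<\beta_1$: the gap lives in $R(X,l,\beta_1)$, the marker $w_k$ sits just \emph{above} the gap in the smaller sector with $\rho_k\in[d_k,\Lambda d_k]$ (available because $p(R(X,l,\beta_0))\le L<1$ bounds gap ratios at small scales), and the cone structure---in fact only starlikeness of $B$ with respect to $b$---supplies the \emph{interior} point $v_s$ on the segment from $b$ to $\psi(u)$; asymptotic equality in the triangle inequality (the ellipse estimate) then places representatives $z_k$ within $o(\rho_k)$ of the segment $[a,w_k]\subseteq\Gamma(a,l,\beta_0)$, hence in $\Gamma(a,l,\beta_1)$ for large $k$, with $|z_k-a|\in\,]c_k,d_k[$, contradicting the gap. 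This trades Lemma \ref{l:4.7} and the paper's constant chase for elementary plane geometry, and it eliminates the closedness reduction entirely, since you never need a supremum over $X$ to be attained: $w_k$ exists by the very definition \eqref{eq1.6} of $R(X,l,\beta_0)$, and $z_k\in X$ automatically. Two small points to tighten: the right endpoint of your limit interval $J$ may equal $1$ (when $\rho_k=d_k$), which is harmless since you take $s$ in the interior of $J$; and the selection of $\rho_k\in[d_k,\Lambda d_k]$ should be justified by running the porosity estimate with $h=\Lambda d_k\to0$ (if the window missed $R(X,l,\beta_0)$, then $l(0,h,R(X,l,\beta_0))\geq(\Lambda-1)d_k$ would give porosity at least $1-1/\Lambda>L$), rather than by appealing to a ratio bound for maximal gaps, since the gap covering the window could a priori extend far to the right---though this is the same one-line computation.
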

Since every convex set $X$  is starlike,  Theorem \ref{t:1.7}
implies the following
\begin{corollary}
\label{8:t2.15} Let $Y$ be a convex subset of $%
\mathbb{C} $ with a marked point $a$ and let $\tilde{r}$ be a
normalizing sequence. The following statements hold for every
pretangent space $\Omega_{a,\tilde r}^Y$.

(i) If the space $\Omega_{a,\tilde{r}}^Y$  is tangent, then
$\Omega_{a,\tilde{r}}^Y$ and $Conv_{a}(Y)$ are isometric.

(ii) If $\Omega_{a,\tilde{r}}^Y$ is pretangent, then $\Omega_{a,
\tilde{r}}^Y$ lies in some tangent
$\Omega_{a,\tilde{r}^{\prime}}^Y$.
\end{corollary}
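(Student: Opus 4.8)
The plan is to reduce everything to Theorem \ref{t:1.7}, once I establish that for a convex set the two cone-hulls $Con_a(Y)$ and $Conv_a(Y)$ coincide. First I would record the elementary observation that a convex set $Y$ is automatically starlike with respect to each of its points, in particular with center $a$: for every $b\in Y$ the segment $[a,b]$ consists of convex combinations of $a,b\in Y$ and hence lies in $Y$. This makes Theorem \ref{t:1.7} applicable with $X=Y$, and it yields at once statement (ii) --- every pretangent space $\Omega_{a,\tilde r}^Y$ lies in some tangent space --- together with, for each tangent $\Omega_{a,\tilde r}^Y$, an isometry onto $Con_a(Y)$ fixing the distinguished point $\alpha=p(\tilde a)$. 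Thus (i) will follow as soon as I show $Con_a(Y)=Conv_a(Y)$.

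For the identification of the two cones I would argue as follows. Let $C_0:=\{a+t(y-a):y\in Y,\ t\ge 0\}$ be the cone with vertex $a$ generated by $Y$. Every closed cone with vertex $a$ containing $Y$ must contain $C_0$, hence also its closure, so $Con_a(Y)=\overline{C_0}$. Since every closed convex cone is in particular a closed cone, the intersection defining $Con_a(Y)$ runs over a larger family than the one defining $Conv_a(Y)$, whence $Con_a(Y)\subseteq Conv_a(Y)$. The reverse inclusion is the crux, and for it I would verify that $C_0$ is convex: given $p_i=a+t_i(y_i-a)\in C_0$ ($i=1,2$) and $s\in[0,1]$, put $t=st_1+(1-s)t_2$; if $t>0$ then $sp_1+(1-s)p_2=a+t(y-a)$ with $y=\frac{st_1}{t}y_1+\frac{(1-s)t_2}{t}y_2\in Y$ by convexity, the coefficients being nonnegative and summing to $1$, while if $t=0$ the combination reduces to the vertex $a\in C_0$. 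Hence $C_0$ is convex, so $\overline{C_0}$ is a closed convex cone containing $Y$, forcing $Conv_a(Y)\subseteq\overline{C_0}=Con_a(Y)$. Combining the two inclusions gives $Con_a(Y)=Conv_a(Y)$; since the isometry of Theorem \ref{t:1.7} maps $\Omega_{a,\tilde r}^Y$ onto $Con_a(Y)$, it is then an isometry onto $Conv_a(Y)$, which proves (i).

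The main obstacle I anticipate is the convexity of the generated cone $C_0$ --- in particular the bookkeeping of the degenerate case $t=0$, and the care required because $C_0$ need not itself be closed, so that one must pass to $\overline{C_0}$ and invoke that the closure of a convex set is convex and that the closure of a cone with vertex $a$ is again a cone with vertex $a$. Everything else is a direct transcription of Theorem \ref{t:1.7}.
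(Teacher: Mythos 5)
Your proposal is correct and follows essentially the same route as the paper: reduce both statements to Theorem \ref{t:1.7} via the observation that a convex set is starlike with center $a$, then establish $Con_a(Y)=Conv_a(Y)$ by showing the cone generated by $Y$ is convex, its closure is a closed convex cone, and $Con_a(Y)$ equals that closure. The only difference is cosmetic: where you verify the convexity of the generated cone $C_0$ by an explicit computation with convex combinations, the paper simply cites Rockafellar for this fact, so your version is marginally more self-contained.
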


For $X=\mathbb R,\ X=\mathbb R^+=[0,\infty[$ or $X=\mathbb C$ all
pretangent spaces $\Omega_{a,\tilde r}^X$  are tangent, see
Section 3 of the present paper, but for an arbitrary convex
$X\subseteq\mathbb C$ pretangent spaces can cease to be tangent.

Recall that convex set $X$ is termed a convex body if $Int\,
X\ne\emptyset$.
\begin{proposition}\label{p:1.12}
Let $X$ be a convex body in the plane and let $a\in\partial X$.
Then for every normalizing sequence $\tilde r$ there is a maximal
self-stable family $\tilde X_{a,\tilde r}$ such that the
corresponding space $\Omega_{a,\tilde r}$ is not tangent.
\end{proposition}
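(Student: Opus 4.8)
The plan is to reduce the assertion to a statement about the \emph{shape} of the metric identification. Since $X$ is convex and $a\in X$, it is starlike with center $a$, so Theorem~\ref{t:1.7} applies: every \emph{tangent} space $\Omega^X_{a,\tilde r}$ is isometric to $C:=Con_a(X)$ by an isometry sending $\alpha=p(\tilde a)$ to the vertex $a$. Hence it will be enough to produce, for the given $\tilde r$, one maximal self-stable family whose identification is \emph{not} isometric to $C$. First I would normalize $a=0$. As $a\in\partial X$ there is a supporting line, so $C$ lies in a half-plane, and since $X$ has interior $C$ is a closed convex cone of some angular measure $\theta\in(0,\pi]$; after a rotation I may assume $C=\{\rho e^{i\phi}:\rho\ge0,\ |\phi|\le\theta/2\}$.

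Next I would fix the rotation $T$ of the plane about $0$ by the angle $\alpha:=\theta/2\in(0,\theta)$ and set $D:=C\cap T^{-1}C$, a closed convex cone of angular measure $\theta-\alpha=\theta/2>0$ with $\operatorname{int}D\neq\emptyset$. For $P\in\operatorname{int}D$ I define a sequence $\tilde z_P$ by
\[
(z_P)_n:=a+r_nP\ \text{ for even }n,\qquad (z_P)_n:=a+r_n(TP)\ \text{ for odd }n,
\]
putting $(z_P)_n:=a$ for the finitely many $n$ for which this lies outside $X$; since $P\in\operatorname{int}C$ and $TP\in\operatorname{int}C$ this affects only small $n$ and no limit. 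A short computation gives $\tilde d(\tilde z_P,\tilde z_{P'})=|P-P'|$, the odd terms contributing $|TP-TP'|=|P-P'|$ as well, so $\mathcal F:=\{\tilde z_P:P\in\operatorname{int}D\}\cup\{\tilde a\}$ is self-stable with identification isometric to $\operatorname{int}D$ via $\tilde z_P\mapsto P$. I then extend $\mathcal F$ to a maximal self-stable family $\tilde X_{a,\tilde r}$ by Zorn's Lemma.

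The main step, and the one I expect to require the most care, is a rigidity claim: the extension adds nothing new, i.e.\ every $\tilde s\in\tilde X_{a,\tilde r}$ is equivalent to some $\tilde z_{P_0}$. Writing $\xi_n:=(s_n-a)/r_n$, stability with $\tilde a$ gives $|\xi_n|\to\rho<\infty$. Stability with the two-parameter family $\{\tilde z_P\}$ forces, separately along even and along odd indices, the existence of $\lim|\xi_n-P|$ and $\lim|\xi_n-TP|$ for all $P$; as $\operatorname{int}D$ is two-dimensional this pins down limits $\xi_n\to P_0$ (even) and $\xi_n\to Q_0$ (odd). Equality of the even and odd limits in $\tilde d(\tilde s,\tilde z_P)$ then reads $|P_0-P|=|Q_0-TP|$ for all $P\in\operatorname{int}D$, and—$TD$ being two-dimensional—this equidistance identity forces $Q_0=TP_0$. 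Hence $\tilde s\sim\tilde z_{P_0}$ with $P_0\in C$ and $TP_0\in C$, i.e.\ $P_0\in D$, so $\Omega_{a,\tilde r}$ is isometric to the sector $D$ of angle $\theta/2$. The delicate obstacle is precisely this rigidity: one must rule out that a maximal extension secretly realizes the \emph{reflection} $P\mapsto\bar P$ (which would restore all of $C$ and make the space tangent) rather than the rotation $T$; the two-dimensionality of the anchor set $\mathcal F$ is exactly what excludes the reflection branch.

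Finally I would argue by contradiction. If $\Omega_{a,\tilde r}$ were tangent, Theorem~\ref{t:1.7} would supply a \emph{vertex-preserving} isometry $\psi:\Omega_{a,\tilde r}\to C$ with $\psi(\alpha)=a$, that is, an isometry of a sector of angle $\theta/2$ onto a sector of angle $\theta$ carrying vertex to vertex. Such a map preserves the diameter of the unit sphere about the vertex, which equals $2\sin(\theta/4)$ in the domain and $2\sin(\theta/2)$ in the target; since $0<\theta/4<\theta/2\le\pi/2$ and $\sin$ is strictly increasing there, these are unequal, a contradiction. Thus the maximal self-stable family $\tilde X_{a,\tilde r}$ has non-tangent $\Omega_{a,\tilde r}$, which is what the proposition asserts.
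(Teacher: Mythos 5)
Your proof is correct, but it follows a genuinely different route from the paper's. The paper deduces Proposition \ref{p:1.12} from Example \ref{8:e2.20} together with the tangent-equivalence machinery: there a \emph{single} oscillating sequence $\tilde z$, alternating between a boundary direction $e^{i\theta_1}$ and an interior direction $e^{i\theta_3}$ of the model cone, is placed in an \emph{arbitrary} maximal self-stable family; if the identification were tangent, Proposition \ref{1:p1.5} makes the even- and odd-indexed subfamilies maximal self-stable, Lemma \ref{8:l2.16}\,(iii) converts each into an isometry onto the cone, and composing one with the inverse of the other produces a self-isometry of the cone carrying an interior point to a boundary point, contradicting Brouwer's invariance of domain; the conclusion is then transported from $Con_a(X)$ to $X$ itself via strong tangent equivalence (Lemma \ref{l:4.6} and Proposition \ref{6:p5.2}\,(ii)). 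You instead build one \emph{specific} maximal family, rigid enough that its identification embeds isometrically into the half-angle sector $D=C\cap T^{-1}C$ with $\alpha$ at the vertex, and you contradict Theorem \ref{t:1.7} with the elementary invariant ``diameter of the unit sphere about the vertex'' ($2\sin(\theta/4)$ versus $2\sin(\theta/2)$, unequal since $0<\theta/4<\theta/2\le\pi/2$). The trade-off is that you must prove the rigidity classification, which is the hardest step, and your argument for it is sound: existence of the full limit along the alternating comparison sequence automatically yields equal even and odd subsequential limits; two-dimensionality of $\operatorname{int}D$ pins down $\xi_n\to P_0$ (even) and $\xi_n\to Q_0$ (odd) by the perpendicular-bisector argument; and $|P_0-P|=|Q_0-TP|=|T^{-1}Q_0-P|$ on an open set forces $Q_0=TP_0$, which in particular excludes the reflection branch you worried about. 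The paper's approach avoids classifying the family altogether, at the price of invoking Brouwer's theorem and Lemma \ref{8:l2.16}; yours works directly in $X$ (the replacement of finitely many terms by $a$ is harmless because $\operatorname{int}D\subseteq\operatorname{int}Con_a(X)$, which for a convex body equals the interior of the non-closed cone spanned by $X$ over $a$, so $a+r_nP\in X$ for all large $n$), thereby skipping the tangent-equivalence transfer. One small inaccuracy: your claim that $\Omega_{a,\tilde r}$ is isometric \emph{onto} all of $D$ is not fully justified, since for $P_0\in\partial D$ the sequence $\tilde z_{P_0}$ may leave $X$ and an approximation/diagonal argument would be needed; but this does not matter, because your final contradiction only uses the isometric embedding into $D$ --- the unit sphere of the image has diameter at most $2\sin(\theta/4)$, which already rules out a vertex-preserving isometry onto $C$.
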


\section{Auxiliary results}

In this section we collect some results related to pretangent and
tangent spaces of general metric spaces.

\begin{proposition}
\label{1:p1.5} Let $X$ be a metric space with a marked point $a$,
$ \tilde{r}$ a normalizing sequence and $\tilde{X}_{a,\tilde{r}}$
a maximal self-stable family with the corresponding pretangent
space $\Omega _{a,\tilde{r}}$. The following statements are
equivalent.

(i) $\Omega_{a,\tilde{r}}$ is tangent.

(ii) For every subsequence $\tilde{r}^{\prime }$ of the sequence $
\tilde{r}$ the family $\left\{ \tilde{x}^{\prime }:\tilde{x}\in
\tilde{X}_{a,\tilde{r}}\right\}$ is maximal self-stable w.r.t.
$\tilde{r}^{\prime}$.

(iii) A function $em^{\prime }:\Omega_{a,\tilde{r}}\longrightarrow
\Omega _{a,\tilde{r}^{\prime }}$ is surjective for every
$\tilde{r} ^{\prime}$.

(iv) A function $in_r':\,\tilde{X}_{a,\tilde r} \longrightarrow
\tilde{X}_{a,\tilde{r}^{\prime}}$ is surjective for every
$\tilde{r}'$.
\end{proposition}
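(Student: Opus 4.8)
The plan is to prove the cycle (i) $\Leftrightarrow$ (iii), (ii) $\Leftrightarrow$ (iv), (iv) $\Rightarrow$ (iii) and (iii) $\Rightarrow$ (ii), which makes all four statements equivalent. Three of these links are formal. For (i) $\Leftrightarrow$ (iii): by the construction preceding Definition \ref{1:d1.4} the map $\mathrm{em}'$ is always an isometric embedding, distance preservation being exactly \eqref{1:eq1.3}; hence $\mathrm{em}'$ is an isometry if and only if it is surjective, which is the reformulation in (iii). For (ii) $\Leftrightarrow$ (iv): write $F:=\{\tilde x':\tilde x\in\tilde X_{a,\tilde r}\}$ for the image of $\mathrm{in}_{\tilde r'}$. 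By \eqref{1:eq1.3} the family $F$ is self-stable w.r.t. $\tilde r'$, and by \eqref{1.3"} we have $F\subseteq\tilde X_{a,\tilde r'}$ with the latter maximal self-stable. If $F=\tilde X_{a,\tilde r'}$ (statement (iv)) then $F$ inherits maximality, giving (ii); conversely a maximal self-stable $F$ cannot be a proper subfamily of the self-stable $\tilde X_{a,\tilde r'}$, so $F=\tilde X_{a,\tilde r'}$, giving (iv). Finally (iv) $\Rightarrow$ (iii) follows from the commutative diagram \eqref{1:eq1.4}: since $p'\circ\mathrm{in}_{\tilde r'}=\mathrm{em}'\circ p$ and the identification maps $p,p'$ are onto, surjectivity of $\mathrm{in}_{\tilde r'}$ forces $\mathrm{em}'\circ p$, and hence $\mathrm{em}'$, to be onto.

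The only substantial link is (iii) $\Rightarrow$ (ii), which I would prove by contraposition. The key ingredient is a saturation property of $F$ inherited from the maximality of $\tilde X_{a,\tilde r}$ with respect to the \emph{full} sequence $\tilde r$: if a sequence $\{z_k\}\in\tilde X$ satisfies $\tilde d_{\tilde r'}(\{z_k\},\tilde x')=0$ for some $\tilde x=\{x_n\}\in\tilde X_{a,\tilde r}$, then $\{z_k\}\in F$. Indeed, set $\tilde w=\{w_n\}$ with $w_{n_k}:=z_k$ and $w_n:=x_n$ for $n\notin\{n_k\}$; then $d(w_n,x_n)/r_n\to0$, so $\tilde d_{\tilde r}(\tilde w,\tilde x)=0$. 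A routine triangle-inequality estimate shows that a sequence at $\tilde d_{\tilde r}$-distance zero from a member of a self-stable family is mutually stable with every member of that family, so by maximality of $\tilde X_{a,\tilde r}$ we get $\tilde w\in\tilde X_{a,\tilde r}$; since $\tilde w'=\{z_k\}$, this yields $\{z_k\}\in F$.

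Now assume (ii) fails for some $\tilde r'$, so that the self-stable $F$ is not maximal: there is $\tilde w\in\tilde X\setminus F$ mutually stable with all of $F$. By the saturation property $\tilde d_{\tilde r'}(\tilde w,\tilde x')\neq0$ for every $\tilde x\in\tilde X_{a,\tilde r}$, i.e. $\tilde w$ lies at \emph{positive} distance from every member of $F$. Extend $F\cup\{\tilde w\}$ by Zorn's Lemma to a maximal self-stable family $\tilde X_{a,\tilde r'}$. Using $\mathrm{em}'(p(\tilde x))=p'(\tilde x')$ from \eqref{1:eq1.4}, the class $p'(\tilde w)$ then satisfies $\rho\bigl(p'(\tilde w),\mathrm{em}'(p(\tilde x))\bigr)=\tilde d_{\tilde r'}(\tilde w,\tilde x')>0$ for all $\tilde x$, so $p'(\tilde w)\notin\mathrm{em}'(\Omega_{a,\tilde r})$ and $\mathrm{em}'$ is not surjective for this choice of $\tilde X_{a,\tilde r'}$. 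Hence (iii) (equivalently (i)) fails, completing the contrapositive.

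I expect the saturation property to be the main obstacle. It is precisely the place where passing to the subsequence $\tilde r'$ could a priori create new distance-zero sequences lying outside $F$; the point is that maximality with respect to $\tilde r$ (not merely $\tilde r'$) lets one reabsorb every such sequence into $F$, so that failure of the set-level surjectivity in (iv) always becomes visible at the metric level in (iii) rather than being hidden inside a distance-zero class. As a byproduct, since (ii) is phrased solely in terms of $F$, which is independent of the choice made in \eqref{1.3"}, the equivalence also yields the independence of the tangent property from that choice, as promised in the remark following Definition \ref{1:d1.4}.
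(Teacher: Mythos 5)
Your proof is correct, and it is essentially the canonical argument: the paper gives no internal proof of Proposition \ref{1:p1.5} (it defers to \cite[Proposition 1.2]{Dov} and \cite[Proposition 1.5]{DM}), but the interleaving construction behind your saturation property --- splice $z_k$ into the positions $n_k$ and keep $x_n$ elsewhere, then use maximality w.r.t.\ the full sequence $\tilde r$ to reabsorb --- is exactly the device this paper itself deploys in \eqref{eq2.8} of Lemma~\ref{l:2.3} and in the sequence $\tilde y$ at the end of the proof of Proposition~\ref{4:p2.4}, so your route coincides with the one the authors rely on. The formal links (i)$\Leftrightarrow$(iii), (ii)$\Leftrightarrow$(iv) and (iv)$\Rightarrow$(iii) are handled correctly, and your saturation lemma is sound: $\tilde d_{\tilde r}(\tilde w,\tilde x)=0$ does propagate mutual stability to all of $\tilde X_{a,\tilde r}$ by the triangle inequality, forcing $\tilde w\in\tilde X_{a,\tilde r}$ and hence $\tilde w'\in F$. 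One refinement worth making explicit: your saturation lemma actually yields non-surjectivity of $em^{\prime}$ for \emph{every} admissible choice of $\tilde X_{a,\tilde r^{\prime}}$ in \eqref{1.3"}, not just the Zorn extension of $F\cup\{\tilde w\}$ you construct --- any maximal self-stable family containing a non-maximal $F$ must strictly contain it, and saturation puts each element of the difference at positive $\tilde d_{\tilde r^{\prime}}$-distance from all of $F$ --- which removes the quantifier ambiguity over choices in (iii) and (iv) and, as you observe, substantiates the remark after Definition~\ref{1:d1.4} that tangency is independent of that choice.
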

For the proof see \cite[Proposition 1.2]{Dov} or \cite[Proposition
1.5]{DM}.

Let  $\tilde F\subseteq\tilde X$. For a normalizing sequence
$\tilde r$ we define a family $[\tilde F]_Y=[\tilde F]_{Y,\tilde
r}$ by the rule
\begin{equation}\label{6:eq5.1}
(\tilde y\in[\tilde F]_Y)\Leftrightarrow((\tilde y\in\tilde
Y)\&(\exists\,\tilde x\in\tilde F:\tilde d_{\tilde r}(\tilde
x,\tilde y)=0)).
\end{equation}

\begin{proposition}[\!\!\cite{Dov}]\label{6:p5.2}
Let $Y$ and $Z$ be subspaces of a metric space $X$ and let $\tilde
r$ be a normalizing sequence. Suppose that $Y$ and $Z$ are tangent
equivalent (w.r.t. $\tilde r$) at a point $a\in Y\cap Z$. Then
following statements hold for every maximal self-stable (in
$\tilde Z$) family $\tilde Z_{a,\tilde r}$.
\begin{itemize}
\item[$(i)$] The family $[\tilde Z_{a,\tilde r}]_Y$ is maximal
self-stable (in $\tilde Y$) and we have the equalities
\begin{equation}\label{6:eq5.2}
[[\tilde Z_{a,\tilde r}]_Y]_Z=\tilde Z_{a,\tilde r}=[\tilde
Z_{a,\tilde r}]_Z.
\end{equation}

\item[$(ii)$] If $\Omega^Z_{a,\tilde r}$ and $\Omega^Y_{a,\tilde
r}$ are metric identifications of $\tilde Z_{a,\tilde r}$ and,
respectively, of $\tilde Y_{a,\tilde r}:=[\tilde Z_{a,\tilde
r}]_Y$, then the mapping
\begin{equation}\label{6:eq5.3}
\Omega_{a,\tilde
r}^Z\ni\alpha\longmapsto[\alpha]_Y\in\Omega_{a,\tilde r}^Y
\end{equation}
is an isometry. Furthermore, if $\Omega_{a,\tilde r}^Z$ is
tangent, then $\Omega^Y_{a,\tilde r}$ also is tangent.
\end{itemize}
\end{proposition}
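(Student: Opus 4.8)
The plan is to reduce every assertion to one elementary ``transfer'' estimate and then push it through the definitions. The workhorse is
$$\bigl|d(y_n,y_n')-d(z_n,z_n')\bigr|\le d(y_n,z_n)+d(y_n',z_n'),$$
valid by two applications of the triangle inequality in $X$. Dividing by $r_n$ and letting $n\to\infty$ shows: if $\tilde d_{\tilde r}(\tilde z,\tilde y)=0$ and $\tilde d_{\tilde r}(\tilde z',\tilde y')=0$, then the pairs $(\tilde y,\tilde y')$ and $(\tilde z,\tilde z')$ are simultaneously mutually stable and, when they are, $\tilde d_{\tilde r}(\tilde y,\tilde y')=\tilde d_{\tilde r}(\tilde z,\tilde z')$. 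Thus the relation ``$\tilde d_{\tilde r}=0$'' sets up a distance‑preserving correspondence between $\tilde Z_{a,\tilde r}$ and $[\tilde Z_{a,\tilde r}]_Y$, and the whole proposition is bookkeeping around this fact.

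For part $(i)$ I would first check that $[\tilde Z_{a,\tilde r}]_Y$ is self-stable: given $\tilde y,\tilde y'$ in it, pick witnesses $\tilde z,\tilde z'\in\tilde Z_{a,\tilde r}$ with $\tilde d_{\tilde r}(\tilde z,\tilde y)=\tilde d_{\tilde r}(\tilde z',\tilde y')=0$; self-stability of $\tilde Z_{a,\tilde r}$ and the transfer estimate hand mutual stability to $(\tilde y,\tilde y')$. Clearly $\tilde a\in[\tilde Z_{a,\tilde r}]_Y$. For maximality, take $\tilde y\in\tilde Y$ mutually stable with all of $[\tilde Z_{a,\tilde r}]_Y$; being mutually stable with $\tilde a$ it has a finite limit $\tilde d_{\tilde r}(\tilde a,\tilde y)$, so Definition \ref{6:d5.1} (with $\tilde z_1=\tilde a$) yields $\tilde z_2\in\tilde Z$ with $\tilde d_{\tilde r}(\tilde y,\tilde z_2)=0$. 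Crossing each $\tilde z\in\tilde Z_{a,\tilde r}$ over to a partner in $[\tilde Z_{a,\tilde r}]_Y$ and applying the transfer estimate shows $\tilde z_2$ is mutually stable with all of $\tilde Z_{a,\tilde r}$; maximality forces $\tilde z_2\in\tilde Z_{a,\tilde r}$, whence $\tilde y\in[\tilde Z_{a,\tilde r}]_Y$. The equality $[\tilde Z_{a,\tilde r}]_Z=\tilde Z_{a,\tilde r}$ is immediate from maximality, and $[[\tilde Z_{a,\tilde r}]_Y]_Z=\tilde Z_{a,\tilde r}$ follows by crossing over twice (out via tangent equivalence, back via the correspondence) and using transitivity of ``$\tilde d_{\tilde r}=0$''.

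For part $(ii)$ the map $\alpha\mapsto[\alpha]_Y$ is well defined: if $\tilde z\in\alpha$ and $\tilde y\in[\tilde Z_{a,\tilde r}]_Y$ satisfies $\tilde d_{\tilde r}(\tilde z,\tilde y)=0$, then $[\alpha]_Y$ is exactly the $\sim$-class of $\tilde y$ (two witnesses are $\tilde d_{\tilde r}$-equivalent by the triangle inequality, and the class is $\sim$-saturated). It is distance preserving and injective straight from the transfer estimate, since the $\Omega^Y$-distance of $[\alpha]_Y,[\beta]_Y$ equals $\tilde d_{\tilde r}(\tilde y,\tilde y')=\tilde d_{\tilde r}(\tilde z,\tilde z')$, the $\Omega^Z$-distance of $\alpha,\beta$; it is surjective because every $\tilde y\in[\tilde Z_{a,\tilde r}]_Y$ is by definition $\tilde d_{\tilde r}$-equivalent to some $\tilde z\in\tilde Z_{a,\tilde r}$.

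For tangency I would invoke Proposition \ref{1:p1.5}: tangency of $\Omega^Z_{a,\tilde r}$ means that for each subsequence $\tilde r'$ the family $\{\tilde z':\tilde z\in\tilde Z_{a,\tilde r}\}$ is maximal self-stable w.r.t. $\tilde r'$, and by the remark after Definition \ref{1:d1.4} it is enough to verify surjectivity of $em'$ for $\Omega^Y$ for a convenient choice of $\tilde Y_{a,\tilde r'}$. Since \eqref{1:eq1.3} preserves $\tilde d$ under projection, the correspondence of part $(i)$ projects to an intertwining of $\{\tilde y':\tilde y\in[\tilde Z_{a,\tilde r}]_Y\}$ with $\{\tilde z':\tilde z\in\tilde Z_{a,\tilde r}\}$ at scale $\tilde r'$, so any candidate $\tilde w$ mutually stable (w.r.t. $\tilde r'$) with all projected $\tilde y'$ is automatically mutually stable with all projected $\tilde z'$. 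The remaining step is to \emph{realize} such a $\tilde w$ by a genuine projection, i.e.\ to cross $\tilde w$ from $Y$ into $Z$ at scale $\tilde r'$, absorb it into the maximal family $\{\tilde z'\}$, and transfer back. This crossing-over is the main obstacle: tangent equivalence w.r.t. $\tilde r$ need not survive passage to $\tilde r'$ (a bounded-$\tilde r'$ sequence need not extend to a bounded-$\tilde r$ one, so Definition \ref{6:d5.1} cannot simply be reapplied at $\tilde r'$). I would handle it by leaning on the maximality of $\{\tilde z':\tilde z\in\tilde Z_{a,\tilde r}\}$ furnished by the tangency of $\Omega^Z_{a,\tilde r}$ to absorb $\tilde w$ after crossing over; the crossing itself is trivial in the infinitesimal regime $\tilde d_{\tilde r'}(\tilde a,\tilde w)=0$ (fill the absent indices with $a$), and in the setting of the applications it is automatic because there $Y$ and $Z$ are \emph{strongly} tangent equivalent, so Definition \ref{6:d5.1} holds at every scale, in particular at $\tilde r'$.
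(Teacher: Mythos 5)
Your handling of part $(i)$ and of the isometry claim in part $(ii)$ is correct. Note that the paper itself offers no proof of this proposition --- it is imported from \cite{Dov} --- so there is nothing in-paper to compare against; on its own terms your transfer estimate does the job cleanly, because every crossing between $\tilde Y$ and $\tilde Z$ that those parts require happens at the scale $\tilde r$ itself, where Definition \ref{6:d5.1} applies verbatim (with $\tilde a$ as the second input), and your maximality and saturation bookkeeping is sound.

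The obstacle you flag in the tangency transfer, however, is not a technicality that maximality of $\{\tilde z':\tilde z\in\tilde Z_{a,\tilde r}\}$ can absorb: under the hypotheses as literally stated the final sentence of $(ii)$ cannot be proved, because it is false. Take $X=\mathbb R$, $a=0$, $r_n=1/n!$, $Z=\{0\}$ and $Y=\{0\}\cup\{r_{2k}:k\in\mathbb N\}$. For odd $n$ and any $y_n\in Y$ one has $y_n/r_n\in[0,1/(n+1)]\cup[n,\infty)$, so every $\tilde y\in\tilde Y$ with finite $\tilde d_{\tilde r}(\tilde a,\tilde y)$ in fact has $\tilde d_{\tilde r}(\tilde a,\tilde y)=0$; consequently $Y$ and $Z$ are tangent equivalent w.r.t.\ this $\tilde r$ (cross everything over to $\tilde a$). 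Here $\tilde Z_{0,\tilde r}=\{\tilde a\}$, so $\Omega^Z_{0,\tilde r}$ is a one-point space and trivially tangent, while $\tilde Y_{0,\tilde r}=[\tilde Z_{0,\tilde r}]_Y=\{\tilde y\in\tilde Y:\lim_{n\to\infty}y_n/r_n=0\}$. But along $\tilde r'=\{r_{2k}\}_{k\in\mathbb N}$ the sequence $\tilde w=\{r_{2k}\}_{k\in\mathbb N}\in\tilde Y$ is mutually stable w.r.t.\ $\tilde r'$ (at distance $1$) with every projection $\tilde y'$, yet is not itself a projection; by Proposition \ref{1:p1.5} the space $\Omega^Y_{0,\tilde r}$ is not tangent. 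This is exactly the failure mode you predicted: a sequence bounded at scale $\tilde r'$ that extends to no sequence bounded at scale $\tilde r$, so Definition \ref{6:d5.1} is never triggered at $\tilde r'$, and your infinitesimal-regime fix (filling with $a$) only covers $\tilde d_{\tilde r'}(\tilde a,\tilde w)=0$. The claim becomes true --- and your outline closes completely --- once ``tangent equivalent'' is strengthened to ``strongly tangent equivalent'': then $\tilde w$ may legitimately be crossed into $\tilde Z$ at scale $\tilde r'$, maximality of $\{\tilde z'\}$ (via Proposition \ref{1:p1.5}) identifies the crossing with some $\tilde z_0'$, its $Y$-partner $\tilde y_0\in\tilde Y_{a,\tilde r}$ satisfies $\tilde d_{\tilde r'}(\tilde w,\tilde y_0')=0$, and the filling device of \eqref{eq2.8} promotes $\tilde w$ to a genuine projection. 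The paper tacitly concedes this: Lemma \ref{l:2.3} and Proposition \ref{p:2.4}, built precisely to manage the passage to subsequences, assume strong tangent equivalence, and every invocation of the tangency clause of Proposition \ref{6:p5.2} in the paper occurs in a strongly tangent equivalent situation (Lemma \ref{l:4.6}, Corollary \ref{c:2.5*}). So your hedge was not a weakness of your argument but an accurate diagnosis of a defect in the statement; you should record the strengthened hypothesis rather than search for a cleverer crossing.
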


The following lemma is a partial generalization of
Proposition~\ref{6:p5.2} (i).
\begin{lemma}\label{l:2.3}
Let $Z$ and $Y$ be subspaces of a metric space $(X,d)$, $a\in
X\cap Y$, $\tilde r$ a normalizing sequence, $\tilde Z_{a,\tilde
r}$ and $\tilde Y_{a,\tilde r}$ maximal self-stable families such
that
\begin{equation}\label{eq2.4}
\tilde Y_{a,\tilde r}=[\tilde Z_{a,\tilde r}]_{Y,\tilde r}.
\end{equation}
Suppose  $Y$  and $Z$ are strongly tangent equivalent at the point
$a$. Then the equality
\begin{equation}\label{eq2.5}
\{\tilde z':\tilde z\in\tilde Z_{a,\tilde r}\}=[\{\tilde
y':y\in\tilde Y_{a,\tilde r}\}]_{Z,\tilde r'}
\end{equation}
holds for every subsequence $\tilde r'$ of the sequence $\tilde
r$.
\end{lemma}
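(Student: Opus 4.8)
The plan is to prove the set equality \eqref{eq2.5} by establishing both inclusions, after first recording the consequence of Proposition~\ref{6:p5.2} that will carry most of the weight. Since $Y$ and $Z$ are strongly tangent equivalent at $a$, they are in particular tangent equivalent w.r.t. $\tilde r$, so Proposition~\ref{6:p5.2}(i) applies to the maximal self-stable family $\tilde Z_{a,\tilde r}$; combined with the hypothesis \eqref{eq2.4} this yields
\begin{equation*}
[\tilde Y_{a,\tilde r}]_{Z,\tilde r}=[[\tilde Z_{a,\tilde r}]_{Y,\tilde r}]_{Z,\tilde r}=\tilde Z_{a,\tilde r}.
\end{equation*}
I will abbreviate $\tilde Z'_{a}:=\{\tilde z':\tilde z\in\tilde Z_{a,\tilde r}\}$ and $\tilde Y'_{a}:=\{\tilde y':\tilde y\in\tilde Y_{a,\tilde r}\}$, so the goal reads $\tilde Z'_a=[\tilde Y'_a]_{Z,\tilde r'}$. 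Throughout I use that passing to a subsequence preserves mutual stability and the value of the limit, i.e.\ \eqref{1:eq1.3}, that membership in $\tilde Z$ is inherited by subsequences, and the definition \eqref{6:eq5.1} of the bracket operation.

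For $\tilde Z'_a\subseteq[\tilde Y'_a]_{Z,\tilde r'}$, fix $\tilde z\in\tilde Z_{a,\tilde r}$. As $\tilde z$ is mutually stable with $\tilde a\in\tilde Z_{a,\tilde r}$, the limit $\tilde d_{\tilde r}(\tilde a,\tilde z)$ is finite, so tangent equivalence w.r.t. $\tilde r$ (applied with $\tilde z_1=\tilde z$) produces $\tilde y_2\in\tilde Y$ with $\tilde d_{\tilde r}(\tilde y_2,\tilde z)=0$. Then $\tilde y_2\in[\tilde Z_{a,\tilde r}]_{Y,\tilde r}=\tilde Y_{a,\tilde r}$ by \eqref{eq2.4}, whence $\tilde y_2'\in\tilde Y'_a$, and \eqref{1:eq1.3} gives $\tilde d_{\tilde r'}(\tilde y_2',\tilde z')=\tilde d_{\tilde r}(\tilde y_2,\tilde z)=0$. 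Since $\tilde z'\in\tilde Z$, this exhibits $\tilde z'$ as a member of $[\tilde Y'_a]_{Z,\tilde r'}$.

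The substantial inclusion is $[\tilde Y'_a]_{Z,\tilde r'}\subseteq\tilde Z'_a$, and this is where I expect the real work. Take $\tilde w\in[\tilde Y'_a]_{Z,\tilde r'}$; then $\tilde w\in\tilde Z$ and there is $\tilde y\in\tilde Y_{a,\tilde r}$ with $\tilde d_{\tilde r'}(\tilde y',\tilde w)=0$. Applying tangent equivalence w.r.t. $\tilde r$ to $\tilde y$ gives $\tilde z_2\in\tilde Z$ with $\tilde d_{\tilde r}(\tilde y,\tilde z_2)=0$; since $\tilde y\in\tilde Y_{a,\tilde r}$, the displayed identity $[\tilde Y_{a,\tilde r}]_{Z,\tilde r}=\tilde Z_{a,\tilde r}$ forces $\tilde z_2\in\tilde Z_{a,\tilde r}$. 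Passing to $\tilde r'$ and using the triangle inequality along the subsequence gives $\tilde d_{\tilde r'}(\tilde z_2',\tilde w)=0$. The idea is now to \emph{splice}: define $\tilde z=\{z_n\}$ by $z_{n_k}:=w_k$ on the subsequence indices and $z_n:=z_{2,n}$ elsewhere. By construction $\tilde z\in\tilde Z$ and $\tilde z'=\tilde w$, so it remains only to check that $\tilde z\in\tilde Z_{a,\tilde r}$.

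This last verification is the crux, and it is settled by maximality. For any $\tilde v\in\tilde Z_{a,\tilde r}$ the limit $L:=\tilde d_{\tilde r}(\tilde z_2,\tilde v)$ exists; off the subsequence $d(z_n,v_n)/r_n=d(z_{2,n},v_n)/r_n\to L$, while on the subsequence the estimate $d(w_k,v_{n_k})\le d(w_k,z_{2,n_k})+d(z_{2,n_k},v_{n_k})$ together with $\tilde d_{\tilde r'}(\tilde z_2',\tilde w)=0$ shows $d(z_{n_k},v_{n_k})/r_{n_k}\to L$ as well. Hence $\tilde d_{\tilde r}(\tilde z,\tilde v)=L$ exists for every $\tilde v\in\tilde Z_{a,\tilde r}$, so $\tilde z$ is mutually stable w.r.t. $\tilde r$ with every member of the maximal self-stable family $\tilde Z_{a,\tilde r}$; maximality then forces $\tilde z\in\tilde Z_{a,\tilde r}$, and therefore $\tilde w=\tilde z'\in\tilde Z'_a$. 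The one delicate point is that the spliced sequence must remain stable against \emph{all} test sequences $\tilde v$ simultaneously, which is exactly what the agreement with $\tilde z_2$ off the subsequence and the vanishing $\tilde r'$-distance to $\tilde z_2'$ on the subsequence guarantee.
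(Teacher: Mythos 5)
Your proof is correct and takes essentially the same route as the paper's: both first derive $\tilde Z_{a,\tilde r}=[\tilde Y_{a,\tilde r}]_{Z,\tilde r}$ from \eqref{eq2.4} and Proposition \ref{6:p5.2}, handle the easy inclusion by restricting a zero-distance pair to the subsequence via \eqref{1:eq1.3}, and settle the reverse inclusion with the identical splicing construction \eqref{eq2.8}. The only (cosmetic) difference is in the final step: where the paper concludes membership of the spliced sequence through the identity $[\tilde Z_{a,\tilde r}]_{Z}=\tilde Z_{a,\tilde r}$ from \eqref{6:eq5.2}, you re-derive that fact by hand, checking mutual stability against every $\tilde v\in\tilde Z_{a,\tilde r}$ and invoking maximality.
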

\begin{proof}
Let $\tilde r'=\{r_{n_k}\}_{k\in\mathbb N}$ be a subsequence of
$\tilde r=\{r_n\}_{n\in\mathbb N}$. We first note that
\eqref{eq2.4} and \eqref{6:eq5.2} imply the equality
$$
\tilde Z_{a,r}=[Y_{a,\tilde r}]_{Z,\tilde r}.
$$
Consequently, if $\tilde z'=\{z_{n_k}\}_{k\in\mathbb N}$ belongs
to the set in the left-hand side of \eqref{eq2.5}, then there is
$\tilde y=\{y_n\}\in\tilde Y_{a,\tilde r}$ such that
$$
\lim_{n\to\infty}\frac{d(y_n,z_n)}{r_n}=0.
$$
Hence
\begin{equation}\label{eq2.6}
\lim_{k\to\infty}\frac{d(y_{n_k},z_{n_k})}{r_{n_k}}=0.
\end{equation}
The last equality means that $\tilde z'$ belongs to the set in the
right-hand side of \eqref{eq2.5}. Conversely, if
$$
\tilde z'=\{z_{n_k}\}_{k\in\mathbb N}\in[\{\tilde y':\tilde
y\in\tilde Y_{a,\tilde r}\}]_{Z,\tilde r'},
$$
then \eqref{eq2.6} holds with some $\tilde y=\{y_n\}\in\tilde
Y_{a,\tilde r}$. Hence, by \eqref{eq2.4}, there is $\tilde
z_1=\{\tilde z_n^{(1)}\}_{n\in\mathbb N}$ such that
\begin{equation}\label{eq2.7}
\lim_{n\to\infty}\frac{d(y_n,z_n^{(1)})}{r_n}=0.
\end{equation}
Let us define $\tilde z_2=\{z_n^{(2)}\}_{n\in\mathbb N}\in\tilde
Z$ by the rule
\begin{equation}\label{eq2.8}
z_n^{(2)}:=\begin{cases} z_{n_k}&\text{if $n=n_k$ for some $k$}\\
z_n^{(1)}&\text{otherwise.}
\end{cases}
\end{equation}
Limit relations \eqref{eq2.6} and \eqref{eq2.7} imply that $\tilde
d_{\tilde r}(\tilde z_1,\tilde z_2)=0$. Moreover, by the second
equality in \eqref{6:eq5.2}, we have $\tilde z_2\in Z_{a,\tilde
r}$. Consequently, by \eqref{eq2.8}, we have
$$
\tilde z'=\tilde z_2'\in\{\tilde z':\tilde z\in\tilde Z_{a,\tilde
r}\}.
$$
\end{proof}
\begin{proposition}\label{p:2.4}
Let $Y$ and $Z$ be subspaces of a metric space $X$ and let $a$ be
a point in $Y\cap Z$. Suppose that $Y$ and $Z$ are strongly
tangent equivalent at the point $a$ and that each  pretangent
space $\Omega_{a,\tilde r}^Y$ lies in some tangent space
$\Omega_{a,\tilde r'}^Y$. Then each pretangent space
$\Omega_{a,\tilde r}^Z$ lies in some tangent space
$\Omega_{a,\tilde r'}^Z$.
\end{proposition}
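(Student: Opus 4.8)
The plan is to fix an arbitrary maximal self-stable family $\tilde Z_{a,\tilde r}$ in $\tilde Z$ with metric identification $\Omega_{a,\tilde r}^Z$, and to manufacture from it a subsequence $\tilde r'$ together with a maximal self-stable family $\tilde Z_{a,\tilde r'}$ whose metric identification is tangent and which contains $\{\tilde z':\tilde z\in\tilde Z_{a,\tilde r}\}$; by Definition \ref{8:d2.14} this is exactly what it means for $\Omega_{a,\tilde r}^Z$ to lie in a tangent space. The route passes through the companion subspace $Y$, exploiting that tangent equivalence in the sense of Definition \ref{6:d5.1} is symmetric in $Y$ and $Z$.

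First I would set $\tilde Y_{a,\tilde r}:=[\tilde Z_{a,\tilde r}]_{Y,\tilde r}$. Since $Y$ and $Z$ are strongly, hence in particular w.r.t.\ $\tilde r$, tangent equivalent, Proposition \ref{6:p5.2}(i) guarantees that $\tilde Y_{a,\tilde r}$ is maximal self-stable in $\tilde Y$, so it has a well-defined pretangent space $\Omega_{a,\tilde r}^Y$. By hypothesis this pretangent space lies in some tangent space; thus there exist a subsequence $\tilde r'$ of $\tilde r$ and a maximal self-stable family $\tilde Y_{a,\tilde r'}$ in $\tilde Y$ with $\{\tilde y':\tilde y\in\tilde Y_{a,\tilde r}\}\subseteq\tilde Y_{a,\tilde r'}$ and with $\Omega_{a,\tilde r'}^Y$ tangent.

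Next I would transport tangency back to $Z$ along $\tilde r'$. Because $\tilde r'$ is again a normalizing sequence, strong tangent equivalence yields that $Y$ and $Z$ are tangent equivalent w.r.t.\ $\tilde r'$; applying Proposition \ref{6:p5.2} with the roles of $Y$ and $Z$ interchanged to the family $\tilde Y_{a,\tilde r'}$ shows that $\tilde Z_{a,\tilde r'}:=[\tilde Y_{a,\tilde r'}]_{Z,\tilde r'}$ is maximal self-stable in $\tilde Z$, and that its metric identification $\Omega_{a,\tilde r'}^Z$ is tangent by part (ii), using that $\Omega_{a,\tilde r'}^Y$ is tangent. It then remains to check the inclusion $\{\tilde z':\tilde z\in\tilde Z_{a,\tilde r}\}\subseteq\tilde Z_{a,\tilde r'}$. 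Here Lemma \ref{l:2.3} applies verbatim, since its hypothesis \eqref{eq2.4} and strong tangent equivalence both hold, giving $\{\tilde z':\tilde z\in\tilde Z_{a,\tilde r}\}=[\{\tilde y':\tilde y\in\tilde Y_{a,\tilde r}\}]_{Z,\tilde r'}$; combined with the monotonicity of the operation $[\,\cdot\,]_{Z,\tilde r'}$, immediate from \eqref{6:eq5.1}, and the inclusion $\{\tilde y':\tilde y\in\tilde Y_{a,\tilde r}\}\subseteq\tilde Y_{a,\tilde r'}$, this forces $\{\tilde z':\tilde z\in\tilde Z_{a,\tilde r}\}\subseteq[\tilde Y_{a,\tilde r'}]_{Z,\tilde r'}=\tilde Z_{a,\tilde r'}$, which is precisely \eqref{1.3"} for the subspace $Z$.

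Definition \ref{8:d2.14} then certifies that $\Omega_{a,\tilde r}^Z$ lies in the tangent space $\Omega_{a,\tilde r'}^Z$, completing the argument. The main obstacle I anticipate is not the tangency transfer, which is a symmetric application of Proposition \ref{6:p5.2}, but rather the verification of the containment \eqref{1.3"}: one must ensure that the particular tangent family $\tilde Z_{a,\tilde r'}$ produced by the detour through $Y$ is compatible with the canonical subsequence inclusion of the original family $\tilde Z_{a,\tilde r}$. This compatibility is exactly the content of Lemma \ref{l:2.3}, so the whole proof hinges on having that lemma available; without it one would know that a tangent $Z$-space exists over $\tilde r'$, but could not conclude that $\Omega_{a,\tilde r}^Z$ embeds into it.
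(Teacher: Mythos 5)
Your proposal is correct and follows essentially the same route as the paper's own proof: the detour through $\tilde Y_{a,\tilde r}=[\tilde Z_{a,\tilde r}]_{Y,\tilde r}$ via Proposition \ref{6:p5.2}, the choice $\tilde Z_{a,\tilde r'}=[\tilde Y_{a,\tilde r'}]_{Z,\tilde r'}$, and the use of Lemma \ref{l:2.3} to secure the inclusion \eqref{1.3"}. The only difference is cosmetic --- you make explicit the monotonicity of $[\,\cdot\,]_{Z,\tilde r'}$ and the symmetric application of Proposition \ref{6:p5.2}, which the paper leaves implicit.
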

\begin{proof}
Let $\Omega_{a,\tilde r}^Z$ be a pretangent space to $Z$ at the
point $a$ and let $\tilde Z_{a,\tilde r}$ be the corresponding
maximal (in $\tilde Z$), self-stable family. Write
$$
\tilde Y_{a,\tilde r}:=[\tilde Z_{a,\tilde r}]_{Y,\tilde r}.
$$
Then, by Proposition \ref{6:p5.2} (i), $\tilde Y_{a,\tilde r}$ is
maximal (in $\tilde Y$), self-stable family and, by the
supposition, there are a subsequence $\tilde r' $ of $\tilde r$
and a maximal self-stable family $\tilde Y_{a,\tilde r'}$ such
that
\begin{equation}\label{eq2.9}
\{\tilde y':\tilde y\in\tilde Y_{a,\tilde r}\}\subseteq\tilde
Y_{a,\tilde r'}
\end{equation}
and $\Omega_{a,\tilde r'}^Y$, the metric identification of $\tilde
Y_{a,\tilde r'}$, is tangent. For this $\tilde r'$ consider the
family $ \{\tilde z':\tilde z\in\tilde Z_{a,\tilde r}\}. $ By
Lemma \ref{l:2.3} we have  equality \eqref{eq2.5}. Write $ \tilde
Z_{a,\tilde r'}:=[\tilde Y_{a,\tilde r'}]_{Z,\tilde r'}. $ It
follows from \eqref{eq2.9} and \eqref{eq2.5} that $ \{\tilde
z':\tilde z\in\tilde Z_{a,\tilde r}\}\subseteq\tilde Z_{a,\tilde
r'} $ and, moreover, Proposition \ref{6:p5.2} implies that
$\Omega_{a,\tilde r'}^Z$, the metric identification of $\tilde
Z_{a,\tilde z'}$, is tangent.
\end{proof}
Let $Y$ be a subspace of a metric space $(X,d)$. For $a\in Y$ and
$t>0$ we denote by
$$
S_t^Y=S^Y(a,t):=\{y\in Y:d(a,y)=t\}
$$
the sphere (in the subspace $Y$) with the center $a$ and the
radius $t$. Similarly for $a\in Z\subseteq X$ and $t>0$ define
$$
S_t^Z=S^Z(a,t):=\{z\in Z:d(a,z)=t\}.
$$
Write
\begin{equation*}\label{6:eq5.6}
\varepsilon_a(t,Z,Y):=\sup_{z\in S_t^Z}\inf_{y\in Y}d(z,y) \quad
\text{and} \quad
\varepsilon_a(t):=\varepsilon_a(t,Z,Y)\vee\varepsilon_a(t,Y,Z).
\end{equation*}

\begin{proposition}[\!\!\cite{DM,Dov}]\label{6:t5.4}
Let $Y$ and $Z$ be subspaces of a metric space $(X,d)$ and let
$a\in Y\cap Z$. Then $Y$ and $Z$ are strongly tangent equivalent
at the point $a$ if and only if the equality
\begin{equation}\label{6:eq5.8}
\lim_{t\to0}\frac{\varepsilon_a(t)}t=0
\end{equation}
holds.
\end{proposition}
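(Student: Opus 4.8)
The plan is to prove the two implications of the equivalence separately, translating between the ``sequential'' condition of Definition~\ref{6:d5.1} and the ``uniform'' condition \eqref{6:eq5.8} on the spheres $S_t^Y$ and $S_t^Z$.

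First I would show that \eqref{6:eq5.8} implies strong tangent equivalence. Fix an arbitrary normalizing sequence $\tilde r=\{r_n\}$ and take $\tilde y_1=\{y_n^{(1)}\}\in\tilde Y$, $\tilde z_1=\{z_n^{(1)}\}\in\tilde Z$ with finite limits $\tilde d_{\tilde r}(\tilde a,\tilde y_1)$ and $\tilde d_{\tilde r}(\tilde a,\tilde z_1)$. Put $t_n:=d(a,y_n^{(1)})$, so that $y_n^{(1)}\in S_{t_n}^Y$ and $t_n\to0$, since $t_n/r_n$ converges and $r_n\to0$. Because $y_n^{(1)}\in S_{t_n}^Y$, the definition of $\varepsilon_a(t_n,Y,Z)$ gives $\inf_{z\in Z}d(y_n^{(1)},z)\le\varepsilon_a(t_n)$, so I may choose $z_n^{(2)}\in Z$ with $d(y_n^{(1)},z_n^{(2)})<\varepsilon_a(t_n)+r_n/(n+1)$ (and set $z_n^{(2)}:=a$ when $y_n^{(1)}=a$). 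Dividing by $r_n$ and rewriting the main term as $\tfrac{\varepsilon_a(t_n)}{t_n}\cdot\tfrac{t_n}{r_n}+\tfrac1{n+1}$, the first factor tends to $0$ by \eqref{6:eq5.8}, the second stays bounded (it converges), and the last summand vanishes; hence $\lim_n d(y_n^{(1)},z_n^{(2)})/r_n=0$. The symmetric construction of $y_n^{(2)}\in Y$ from $z_n^{(1)}$ yields $\lim_n d(y_n^{(2)},z_n^{(1)})/r_n=0$, which is exactly Definition~\ref{6:d5.1}; as $\tilde r$ was arbitrary, $Y$ and $Z$ are strongly tangent equivalent.

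For the converse I would argue by contraposition. Suppose \eqref{6:eq5.8} fails, so there are $\delta>0$ and a sequence $t_n>0$ with $t_n\to0$ and $\varepsilon_a(t_n)\ge\delta t_n$ for all $n$. Passing to a subsequence I may assume the maximum defining $\varepsilon_a(t_n)$ is realized by the same summand, say $\varepsilon_a(t_n,Z,Y)\ge\delta t_n$ (the other case being symmetric). By the definition of the supremum I can pick $z_n\in S_{t_n}^Z$ with $\inf_{y\in Y}d(z_n,y)>\tfrac{\delta}{2}t_n$. The decisive step is to use the radii themselves as the normalizing sequence: set $r_n:=t_n$, a legitimate normalizing sequence, and consider $\tilde z_1:=\{z_n\}\in\tilde Z$ together with $\tilde y_1:=\tilde a\in\tilde Y$, which have the finite limits $1$ and $0$. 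If $Y$ and $Z$ were tangent equivalent w.r.t. this $\tilde r$, Definition~\ref{6:d5.1} would produce $\tilde y_2=\{y_n^{(2)}\}\in\tilde Y$ with $\lim_n d(y_n^{(2)},z_n)/r_n=0$. But $y_n^{(2)}\in Y$ forces $d(z_n,y_n^{(2)})\ge\inf_{y\in Y}d(z_n,y)>\tfrac{\delta}{2}t_n=\tfrac{\delta}{2}r_n$, whence $d(y_n^{(2)},z_n)/r_n>\delta/2$ for every $n$, contradicting the vanishing limit. Thus $Y$ and $Z$ are not tangent equivalent for this single normalizing sequence, so they are not strongly tangent equivalent.

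I expect the second implication to be the main obstacle, not because it is long but because it hinges on the right construction: the failure of \eqref{6:eq5.8} is a statement about arbitrarily small radii, and one must convert it into the failure of Definition~\ref{6:d5.1} for one cleverly chosen normalizing sequence. Taking $\tilde r=\{t_n\}$ aligns the sphere radii with the normalization, and pairing the bad sequence $\tilde z_1$ against the constant sequence $\tilde a$ isolates the obstruction. In the first implication the only genuine care needed is to let the approximation error shrink faster than $r_n$ (here $r_n/(n+1)$), so that it disappears after dividing by $r_n$, and to treat the degenerate terms $y_n^{(1)}=a$ separately.
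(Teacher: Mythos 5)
Your proof is correct. Note that the paper itself gives no proof of Proposition~\ref{6:t5.4} (it is quoted from \cite{DM,Dov}), and your argument is exactly the standard one from those sources: for sufficiency, approximate $y_n^{(1)}$ by a point of $Z$ within $\varepsilon_a(t_n)+o(r_n)$ and factor the ratio as $\frac{\varepsilon_a(t_n)}{t_n}\cdot\frac{t_n}{r_n}$, and for necessity, take the bad radii $\tilde r=\{t_n\}$ as the normalizing sequence and test Definition~\ref{6:d5.1} against the constant sequence $\tilde a$ (your degenerate cases are also handled properly, and finiteness of $\varepsilon_a(t)$ is automatic since $a\in Y\cap Z$ gives $\varepsilon_a(t)\le t$).
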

\begin{corollary}\label{c:2.5*}
Let $Y$ be a dense subset of a metric space $X$. Then $X$ and $Y$
are strongly tangent equivalent at every point $a\in Y$.
\end{corollary}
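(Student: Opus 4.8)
The plan is to invoke Proposition~\ref{6:t5.4} with the choice $Z=X$, so that proving the corollary reduces to verifying the limit relation \eqref{6:eq5.8} for the pair $X$ and $Y$ at the point $a$. First I would compute the two one-sided defects $\varepsilon_a(t,X,Y)$ and $\varepsilon_a(t,Y,X)$ separately. Since $Y\subseteq X$, every $y\in S_t^Y$ already lies in $X$, so the inner infimum $\inf_{x\in X}d(y,x)$ vanishes (take $x=y$); hence $\varepsilon_a(t,Y,X)=0$ for all $t>0$.

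For the other defect the density hypothesis is exactly what is needed. Because $Y$ is dense in $X$, for any $x\in X$ and any $\delta>0$ there is a point $y\in Y$ with $d(x,y)<\delta$, so $\inf_{y\in Y}d(x,y)=0$ for every $x\in X$. Taking the supremum over $x\in S_t^X$ then gives $\varepsilon_a(t,X,Y)=0$ for every $t>0$, with the usual convention that the supremum over an empty sphere is $0$. Consequently $\varepsilon_a(t)=\varepsilon_a(t,X,Y)\vee\varepsilon_a(t,Y,X)=0$ for all $t>0$, the quotient $\varepsilon_a(t)/t$ is identically zero, and \eqref{6:eq5.8} holds trivially. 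By Proposition~\ref{6:t5.4} this yields that $X$ and $Y$ are strongly tangent equivalent at $a$, which is precisely the assertion.

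I do not expect any genuine obstacle here. Once the roles $Z=X$ and $Y=Y$ are fixed, both defects collapse to zero for complementary reasons: $\varepsilon_a(t,Y,X)$ vanishes because $Y$ sits inside $X$, while $\varepsilon_a(t,X,Y)$ vanishes because $Y$ is dense in $X$, so the characterization \eqref{6:eq5.8} is satisfied in the strongest possible way. The only minor point to keep in mind is the degenerate case in which some sphere $S_t^X$ happens to be empty; this merely makes the corresponding supremum vacuous and does not affect the conclusion.
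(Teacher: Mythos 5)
Your proof is correct and follows exactly the route the paper intends: the corollary is stated without proof as an immediate consequence of Proposition~\ref{6:t5.4}, and your computation — $\varepsilon_a(t,Y,X)=0$ from the inclusion $Y\subseteq X$ and $\varepsilon_a(t,X,Y)=0$ from density, so that \eqref{6:eq5.8} holds trivially — is precisely the implicit argument. The remark about empty spheres $S_t^X$ is a reasonable precaution and does not affect the conclusion.
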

\begin{lemma}\label{p:2.6}
Let $(X,d)$ be a metric space with a marked point $a$, $\tilde
r=\{r_n\}_{n\in\mathbb N}$ a normalizing sequence and let $\tilde
X_{a,\tilde r}$ be a maximal self-stable family. Then for every
$\varepsilon>0$ and every $\tilde x=\{x_n\}\in\tilde X_{a,\tilde
r}$ with $\tilde d_{\tilde r}(\tilde x,\tilde a)>0$ there is
$n_0\in\mathbb N$ such that the double inequality
\begin{equation}\label{*2.10}
(1-\varepsilon)\tilde d_{\tilde r}(\tilde a,\tilde
x)<\frac{d(a,x_n)}{r_n}<(1+\varepsilon)\tilde d_{\tilde r}(\tilde
a,\tilde x)
\end{equation}
holds for all natural numbers $n\geq n_0$.
\end{lemma}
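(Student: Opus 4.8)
The plan is to recognize this statement as essentially an unwinding of the definition of limit, with the positivity hypothesis playing the decisive role. First I would observe that the hypotheses supply exactly one structural fact: since $\tilde X_{a,\tilde r}$ is a maximal self-stable family, it contains $\tilde a$ (in the convention of Proposition~\ref{1:p1.2}), and it contains $\tilde x$; hence $\tilde a$ and $\tilde x$ are mutually stable with respect to $\tilde r$. By Definition~\ref{1:d1.1} this means the finite limit
$$
L := \tilde d_{\tilde r}(\tilde a,\tilde x) = \lim_{n\to\infty}\frac{d(a,x_n)}{r_n}
$$
exists. This existence is the only ingredient drawn from the pretangent-space machinery; the remainder is elementary real analysis.

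Next, using the hypothesis $L>0$, I would apply the $\varepsilon$--$\delta$ characterization of convergence with the \emph{scaled} tolerance $\delta := \varepsilon L$. Because $\varepsilon>0$ and $L>0$, this $\delta$ is a genuine positive number, so there exists $n_0\in\mathbb N$ for which
$$
\left|\frac{d(a,x_n)}{r_n}-L\right|<\varepsilon L
$$
holds for all $n\geq n_0$. Rewriting this single absolute-value inequality in its equivalent two-sided form and factoring out $L$ gives
$$
(1-\varepsilon)L<\frac{d(a,x_n)}{r_n}<(1+\varepsilon)L,
$$
which is precisely \eqref{*2.10} once $L$ is replaced by $\tilde d_{\tilde r}(\tilde a,\tilde x)$.

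I do not anticipate any serious obstacle: the conclusion is a direct quantitative reformulation of the convergence guaranteed by mutual stability. The only point genuinely worth flagging is the necessity of the assumption $\tilde d_{\tilde r}(\tilde x,\tilde a)>0$, since it is exactly this positivity that permits the additive error bound to be recast as a \emph{relative}, two-sided multiplicative estimate; were $L=0$ the factor $L$ would collapse and the stated double inequality would degenerate, which is why that case is deliberately excluded from the hypotheses.
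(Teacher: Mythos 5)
Your proof is correct: the paper itself omits the proof as ``simple,'' and your argument --- extracting the existence of the finite limit $L=\tilde d_{\tilde r}(\tilde a,\tilde x)$ from the mutual stability of $\tilde a$ and $\tilde x$ in $\tilde X_{a,\tilde r}$, then applying the definition of convergence with the scaled tolerance $\varepsilon L>0$ to obtain \eqref{*2.10} --- is exactly the intended elementary argument. Your closing remark on why the hypothesis $\tilde d_{\tilde r}(\tilde x,\tilde a)>0$ is indispensable (it is what turns the additive error into the two-sided multiplicative bound) is also well placed.
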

A simple proof is omitted here.

\begin{lemma}
\label{4:l2.8} Let $(X,d)$ be a metric space with a marked point
$a$,  $\tilde{r}=\left\{r_{n}\right\}_{n\in\mathbb{N}}$ a
normalizing sequence and $\tilde{X}_{a,\tilde{r}}$ a maximal
self-stable family and $\tilde{f}=\left\{
f_{n}\right\}_{n\in\mathbb{N}}$ a sequence of isometries
$f_{n}:X\rightarrow X$ \ with $f_{n}(a)=a$ for all
$n\in\mathbb{N}$. Then the family
\begin{equation}
\tilde{f}(\tilde{X}_{a,\tilde{r}}):=\left\{\left\{
f_{n}(x_{n})\right\} _{n\in\mathbb{N}}:\left\{ x_{n}\right\}_{n\in
\mathbb{N}}\in \tilde{X}_{a,\tilde{r}}\right\} \label{equ2.20}
\end{equation}%
is a maximal self-stable family and, in addition, the pseudometric
spaces $(\tilde{X}_{a,\tilde{r}},\tilde{d})$, $(\tilde{f}(
\tilde{X}_{a,\tilde{r}}),\tilde{d})$  are isometric. Moreover,
$\Omega_{a,\tilde r}$ and $\Omega_{a,\tilde r}^{\tilde f}$, metric
identifications of $\tilde X _{a,\tilde r}$ and, respectively, of
$\tilde f(\tilde X _{a,\tilde r})$, are simultaneously tangent or
not.
\end{lemma}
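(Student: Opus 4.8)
The plan is to exploit that each $f_n$, being distance preserving and onto, is a bijection of $X$ whose inverse $f_n^{-1}$ is again an isometry fixing $a$. This invertibility is the engine behind every part of the statement, and I would set it up at the outset.

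First I would verify self-stability together with the isometry claim, which go hand in hand. For $\tilde x=\{x_n\}$, $\tilde y=\{y_n\}\in\tilde X_{a,\tilde r}$, the identity $d(f_n(x_n),f_n(y_n))=d(x_n,y_n)$ gives
$$
\lim_{n\to\infty}\frac{d(f_n(x_n),f_n(y_n))}{r_n}=\tilde d_{\tilde r}(\tilde x,\tilde y),
$$
so $\tilde f(\tilde x)$ and $\tilde f(\tilde y)$ are mutually stable and carry the same value of $\tilde d$. Thus $\tilde f(\tilde X_{a,\tilde r})$ is self-stable, and the obviously surjective map $\tilde x\mapsto\tilde f(\tilde x)$ is distance preserving, i.e. an isometry of the two pseudometric spaces. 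Since $f_n(a)=a$ forces $\tilde f(\tilde a)=\tilde a$, the constant sequence lies in $\tilde f(\tilde X_{a,\tilde r})$ as well.

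The main step is maximality, and here I would use the inverse isometries. Let $\tilde w=\{w_n\}\in\tilde X$ be mutually stable with every element of $\tilde f(\tilde X_{a,\tilde r})$, and set $\tilde g(\tilde w):=\{f_n^{-1}(w_n)\}$. Applying $f_n$ inside the metric gives, for each $\tilde x\in\tilde X_{a,\tilde r}$,
$$
\frac{d(f_n^{-1}(w_n),x_n)}{r_n}=\frac{d(w_n,f_n(x_n))}{r_n},
$$
whose limit exists because $\tilde w$ is stable with $\tilde f(\tilde x)$. Hence $\tilde g(\tilde w)$ is mutually stable with every $\tilde x\in\tilde X_{a,\tilde r}$, so maximality of $\tilde X_{a,\tilde r}$ yields $\tilde g(\tilde w)\in\tilde X_{a,\tilde r}$, and therefore $\tilde w=\tilde f(\tilde g(\tilde w))\in\tilde f(\tilde X_{a,\tilde r})$. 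This is the part I expect to be the crux: without invertibility one cannot pull an externally stable sequence back into $\tilde X_{a,\tilde r}$.

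Finally, for the tangency statement I would pass to an arbitrary subsequence $\tilde r'=\{r_{n_k}\}$ and record the compatibility $(\tilde f(\tilde x))'=\tilde f'(\tilde x')$ with $\tilde f':=\{f_{n_k}\}$, so that
$$
\{(\tilde f(\tilde x))':\tilde x\in\tilde X_{a,\tilde r}\}=\tilde f'\bigl(\{\tilde x':\tilde x\in\tilde X_{a,\tilde r}\}\bigr).
$$
By Proposition \ref{1:p1.5}(ii), tangency of $\Omega_{a,\tilde r}$ is equivalent to the family $\{\tilde x':\tilde x\in\tilde X_{a,\tilde r}\}$ being maximal self-stable w.r.t. $\tilde r'$ for every $\tilde r'$, and the maximality claim proved above—applied to $\tilde f'$ and, through $f_{n_k}^{-1}$, to its inverse—shows that this family is maximal self-stable exactly when $\tilde f'(\{\tilde x':\tilde x\in\tilde X_{a,\tilde r}\})$ is. Reading the same criterion for $\Omega_{a,\tilde r}^{\tilde f}$ then gives the desired equivalence, so the two spaces are simultaneously tangent or not.
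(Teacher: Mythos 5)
Your proof is correct and follows essentially the same route as the paper's: distance preservation under the $f_n$ gives self-stability and the pseudometric isometry, the inverse isometries $f_n^{-1}$ are used to transfer maximality, and Proposition \ref{1:p1.5} applied along subsequences settles the tangency equivalence. The only differences are cosmetic: you argue maximality directly rather than by contradiction, and you spell out both directions of the tangency equivalence (via $f_{n_k}^{-1}$) where the paper proves one direction and leaves the symmetric converse implicit.
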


\begin{proof}
Since
\begin{equation*}
\tilde{d}(\tilde{x},\tilde{y})=\underset{n\rightarrow \infty }{
\lim }\frac{d(x_{n},y_{n})}{r_{n}}=\underset{n\rightarrow \infty
}{\lim} \frac{d(f_{n}(x_{n}),f_{n}(y_{n}))}{r_{n}},
\end{equation*}%
every two $\left\{ f_{n}(x_{n})\right\} _{n\in\mathbb{N}}$ and
$\left\{f_{n}(y_{n})\right\}_{n\in\mathbb{N}}$ are mutually stable
if $\tilde{x}=\left\{x_{n}\right\}_{n\in\mathbb{N}}$ and
$\tilde{y}=\left\{ y_{n}\right\}_{n\in\mathbb{N}}$ are mutually
stable and the mapping
\begin{equation*}
\tilde{X}_{a,\tilde{r}}\ni
\tilde{x}=\left\{x_{n}\right\}_{n\in\mathbb{N}}\longmapsto \left\{
f_{n}(x_{n})\right\}_{n\in\mathbb{N}}:=\tilde f(\tilde{x})\in
\tilde{f}(\tilde{X}_{a,\tilde r})
\end{equation*}%
is an isometry. It is clear that $\tilde{f}(\tilde{a}%
)=(a,a,...,a,...)\in \tilde{f}(\tilde{X}_{a,\tilde{r}})$. Hence it
suffices to show that $\tilde{f}(\tilde{X}_{a,\tilde r})$ is
maximal self-stable. Suppose there is
$\tilde{z}=\left\{z_{n}\right\}_{n\in\mathbb{N}}\in \tilde{X}$
such that $\tilde{z}\notin \tilde{f}(\tilde{X}_{a,\tilde{r}})$ but
$\tilde{z}$ and $\tilde{x}$ are mutually stable for all $\tilde
x\in\tilde{f}(\tilde{X}_{a,\tilde r})$. It is easy to see that
\begin{equation*}
\tilde{f}^{-1}(\tilde{z}):=\left\{
f_{n}^{-1}(z_{n})\right\}_{n\in\mathbb{N}}\notin
\tilde{X}_{a,\tilde{r}}
\end{equation*}%
where $f_{n}^{-1}$ is the inverse isometry of the isometry $f_{n}$
and that $\tilde{x}$ and $\tilde{f}^{-1}(\tilde{z})$ are mutually
stable for each $\tilde{x}\in \tilde{X}_{a,\tilde{r}}$. Hence $
\tilde{X}_{a,\tilde{r}}$ is not a maximal self-stable family,
contrary to the condition of the lemma.

 Suppose that $\Omega
_{a,\tilde{r}}$ is tangent. Let $\tilde{r}^{\prime }=\left\{
r_{n_{k}}\right\}_{n\in\mathbb{N}}$ be a subsequence of
$\tilde{r}.$ Then, by Proposition \ref{1:p1.5}, the family
$
\left\{ \left\{ x_{n_{k}}\right\} _{k\in\mathbb{N}}:\left\{
x_{n}\right\}_{n\in\mathbb{N}}\in \tilde{X}_{a,\tilde{r}}\right\}
$
is maximal self-stable. The first part of the proof implies that
the family
$
\left\{ \left\{ f_{n_{k}}(x_{n_{k}})\right\}
_{k\in\mathbb{N}}:\left\{ x_{n}\right\} _{n\in\mathbb{N}}\in
\tilde{X}_{a,\tilde{r}}\right\}
$
is also maximal self-stable. Consequently, by Proposition
\ref{1:p1.5}, $\Omega _{a,\tilde{r}}^{\tilde f}$ is tangent.
\end{proof}

\section{Tangent spaces to some model metric spaces}

In this section we describe tangent spaces to $\mathbb R^+,\
\mathbb R$ and $\mathbb C$.
\begin{example}
\label{4:e2.1} Let $X=\mathbb R$ or $X=\mathbb{R}^{+}=\left[
0,\infty \right[$ and let
$
d(x,y)=\left\vert x-y\right\vert.
$
 We claim that each
pretangent space $\Omega_{0,\tilde r}^X$ (to $X$ at the point $0$)
is tangent and isometric to $(X,d)$ for all normalizing sequences
$\tilde r$.
\end{example}

Consider the more difficult case $X=\mathbb R$.

\begin{proposition}
\label{4:p2.4} Let $\tilde{X}_{0,\tilde r}$ be maximal self-stable
family and let $\tilde{b}=\left\{b_{n}\right\}_{n\in\mathbb{N}}$
be an element of $\tilde{X}_{0,\widetilde{r}}$ such that
\begin{equation*}
\tilde{d}_{\tilde{r}}(\tilde{0},\tilde{b})=\underset{n\rightarrow
\infty }{\lim }\frac{\left\vert b_{n}\right\vert }{r_{n}}\neq 0.
\end{equation*}

The following statements are true.

(i) For every $\widetilde{y}=\left\{
y_{n}\right\}_{n\in\mathbb{N}}\in \tilde{X}_{0,\tilde{r}}$ there
is a finite limit $\underset{n\rightarrow \infty}{\lim
}\frac{y_{n}}{b_{n}}$ and, conversely, if $\tilde{y}\in \tilde{X}$
and this limit is finite, then $\tilde{y}\in
\tilde{X}_{0,\tilde{r}}$.

(ii) For every two $\tilde{x}=\left\{ x_{n}\right\}
_{n\in\mathbb{N}}$ and $\tilde{y}=\left\{
y_{n}\right\}_{n\in\mathbb{N}}$ from $\tilde{X}_{0,\tilde{r}}$ the
equality $\tilde{d}_{\tilde{r}}(\tilde{x},\tilde{y})=0$ holds if
and only if
\begin{equation*}
\underset{n\rightarrow \infty}{\lim}\frac{x_{n}}{b_{n}}=\underset{
n\rightarrow \infty}{\lim }\frac{y_{n}}{b_{n}}.
\end{equation*}

(iii) The pretangent space $\Omega _{0,\tilde{r}}$ which
corresponds to $\tilde{X}_{0,\tilde{r}}$ is isometric to
$(\mathbb{R},\left\vert .,.\right\vert)$ and tangent.
\end{proposition}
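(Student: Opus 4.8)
The plan is to establish the three statements of Proposition~\ref{4:p2.4} in the order given, since each builds naturally on the previous one. Throughout I fix the distinguished element $\tilde b = \{b_n\} \in \tilde X_{0,\tilde r}$ with $\tilde d_{\tilde r}(\tilde 0, \tilde b) =: c \neq 0$, so that $|b_n|/r_n \to c > 0$. The key observation driving everything is that for any $\tilde y = \{y_n\} \in \tilde X_{0,\tilde r}$, mutual stability with $\tilde b$ gives a finite limit of $|y_n - b_n|/r_n$, while self-stability with $\tilde 0$ gives a finite limit of $|y_n|/r_n$; combined with $|b_n|/r_n \to c$, these control the ratio $y_n/b_n$.

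For part (i), I would first note that by Lemma~\ref{p:2.6} applied to $\tilde b$ we may assume $b_n \neq 0$ for all large $n$ (and discard or redefine finitely many terms), so $y_n/b_n$ is eventually well-defined. The real line structure is essential here: writing $y_n/b_n = (y_n/r_n)/(b_n/r_n)$, both $|y_n|/r_n$ and $|b_n|/r_n$ converge, with the denominator tending to $c > 0$, so $|y_n/b_n|$ is bounded. To upgrade boundedness of the absolute value to existence of the signed limit, I would use that in $\mathbb R$ the distance $|y_n - b_n|$ together with $|y_n|$ and $|b_n|$ determines the relative sign: from $|y_n-b_n|/r_n \to \tilde d(\tilde y,\tilde b)$ one pins down $\lim y_n/b_n$ via the identity relating $|y_n - b_n|$, $|y_n|$, and $|b_n|$ (passing to the two cases where $y_n, b_n$ have the same or opposite sign along subsequences and ruling out oscillation by the convergence of $|y_n-b_n|/r_n$). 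For the converse, given $\tilde y \in \tilde X$ with $y_n/b_n \to \lambda$ finite, I show $\tilde y$ is mutually stable with every $\tilde x \in \tilde X_{0,\tilde r}$: indeed $|y_n - x_n|/r_n = |(y_n/b_n) - (x_n/b_n)|\cdot(|b_n|/r_n)$, and each factor converges by part~(i) applied to $\tilde x$ and the hypothesis on $\tilde y$; maximality of $\tilde X_{0,\tilde r}$ then forces $\tilde y \in \tilde X_{0,\tilde r}$.

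For part (ii), I would compute directly: for $\tilde x, \tilde y \in \tilde X_{0,\tilde r}$,
\begin{equation*}
\tilde d_{\tilde r}(\tilde x, \tilde y) = \lim_{n\to\infty} \frac{|x_n - y_n|}{r_n} = \lim_{n\to\infty} \left| \frac{x_n}{b_n} - \frac{y_n}{b_n}\right| \cdot \frac{|b_n|}{r_n} = \left| \lim_{n\to\infty}\frac{x_n}{b_n} - \lim_{n\to\infty}\frac{y_n}{b_n}\right| \cdot c,
\end{equation*}
where all limits exist by part~(i). Since $c > 0$, this equals zero if and only if the two limits of $x_n/b_n$ and $y_n/b_n$ coincide, which is exactly the claim. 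For part (iii), this same computation shows that the map sending the equivalence class of $\tilde y$ to $c \cdot \lim_{n} (y_n/b_n) \in \mathbb R$ is well-defined (by part~(ii)) and distance-preserving; surjectivity follows from the converse in part~(i), since for any $\lambda \in \mathbb R$ the sequence $y_n := \lambda b_n$ lies in $\tilde X_{0,\tilde r}$ and maps to $\lambda c$. Hence $\Omega_{0,\tilde r}$ is isometric to $(\mathbb R, |\cdot|)$. To see it is tangent, I invoke Proposition~\ref{1:p1.5}(ii): for any subsequence $\tilde r' = \{r_{n_k}\}$ I must check that $\{\tilde x' : \tilde x \in \tilde X_{0,\tilde r}\}$ is already maximal self-stable with respect to $\tilde r'$. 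This follows because the characterization in part~(i) is insensitive to passing to subsequences — $\tilde b' = \{b_{n_k}\}$ still satisfies $|b_{n_k}|/r_{n_k} \to c \neq 0$, and any $\tilde w = \{w_k\}$ stable against all such $\tilde x'$ has $w_k/b_{n_k}$ convergent, so it is the restriction of the sequence $\lambda b_n$ up to a stable perturbation.

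The main obstacle I expect is the sign-determination argument in part~(i): boundedness of $|y_n/b_n|$ is immediate, but extracting the existence of the genuine (signed) limit requires care, because a priori $y_n/b_n$ could oscillate between a positive and a negative cluster value while keeping $|y_n|/r_n$ convergent. The resolution is that $|y_n - b_n|/r_n$ must also converge (by mutual stability of $\tilde y$ and $\tilde b$), and in $\mathbb R$ the three quantities $|y_n|$, $|b_n|$, $|y_n - b_n|$ together determine whether $y_n$ and $b_n$ share a sign; convergence of all three normalized quantities then eliminates oscillation. This is precisely the step where the one-dimensionality of $\mathbb R$ (as opposed to $\mathbb C$) is used, and it is what makes this case ``more difficult'' than $\mathbb R^+$ in Example~\ref{4:e2.1}, where all quantities are automatically nonnegative.
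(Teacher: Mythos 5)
Your proposal is correct and follows essentially the same route as the paper: the signed limit in (i) is pinned down exactly as in the paper's proof, by observing that sign oscillation along two subsequences would force $\tilde d(\tilde 0,\tilde y)+\tilde d(\tilde 0,\tilde b)=\left\vert\tilde d(\tilde 0,\tilde y)-\tilde d(\tilde 0,\tilde b)\right\vert$ and hence $\tilde d(\tilde 0,\tilde y)\wedge\tilde d(\tilde 0,\tilde b)=0$, while (ii) and (iii) rest on the same factorization $\frac{\vert x_n-y_n\vert}{r_n}=\bigl\vert\frac{x_n}{b_n}-\frac{y_n}{b_n}\bigr\vert\cdot\frac{\vert b_n\vert}{r_n}$. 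The only cosmetic differences are that the paper normalizes via $\tilde r^{\ast}=\{r_n\,sgn(b_n)\}_{n\in\mathbb N}$ where you use $c\cdot\lim_{n\to\infty}y_n/b_n$ directly (these agree since $x_n/r_n^{\ast}=(x_n/b_n)(\vert b_n\vert/r_n)$), and for tangency the paper checks surjectivity of $in_{\tilde r'}$ (criterion (iv) of Proposition \ref{1:p1.5}) while you check maximality of the restricted family (criterion (ii)) --- both hinge on the same splicing construction, which your phrase ``up to a stable perturbation'' should be made explicit as: set $x_{n_k}:=w_k$ and $x_n:=\lambda b_n$ otherwise, so that $\tilde x\in\tilde X_{0,\tilde r}$ by the converse in (i) and $\tilde w=\tilde x'$ literally belongs to the restricted family.
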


\begin{proof}
(i) If $\tilde{y}=\left\{ y_{n}\right\}_{n\in\mathbb{N}}\in
\tilde{X}_{0,\tilde{r}}$, then there are finite limits
\begin{equation*}
\tilde{d}(\tilde{0},\tilde{y})=\underset{n\rightarrow \infty }{
\lim }\frac{\left\vert y_{n}\right\vert }{r_{n}}\text{ and
}\tilde{d}( \tilde{b},\tilde{y})=\underset{n\rightarrow \infty
}{\lim }\frac{ \left\vert y_{n}-b_{n}\right\vert }{r_{n}}.
\end{equation*}%
For the case where $\tilde{d}(\tilde{0},\tilde{y})=0$ we obtain
\begin{equation*}
0=\frac{\tilde{d}(\tilde{0},\tilde{y})}{\tilde{d}(\tilde{
0},\tilde{b})}=\underset{n\rightarrow \infty }{\lim
}\frac{\left\vert y_{n}\right\vert }{\left\vert b_{n}\right\vert
}=\underset{n\rightarrow \infty}{\lim}\frac{y_{n}}{b_{n}}
\end{equation*}
because $\tilde{d}(\tilde{0},\tilde{b})\neq 0$. Suppose
$\tilde{d}(\tilde{0},\tilde{y})\neq 0,$ then we have
\begin{equation}\label{4:eq2.3}
0<\underset{n\rightarrow \infty }{\lim }\frac{\left\vert
y_{n}\right\vert}{\left\vert b_{n}\right\vert
}=\frac{\tilde{d}(\tilde{0},\tilde{y}
)}{\tilde{d}(\tilde{0},\tilde{b})}<\infty.
\end{equation}%
Write for every $t\in\mathbb{R}$
\begin{equation*}
t=\left\vert t\right\vert sgn(t)
\end{equation*}%
where, as usual,
\begin{equation*}
sgn(t)=\left\{
\begin{array}{ccc}
1 & \text{if} & t>0 \\
0 & \text{if} & t=0 \\
-1 & \text{if} & t<0.%
\end{array}%
\right.
\end{equation*}
Then, it follows from \eqref{4:eq2.3}, the limit
$\underset{n\rightarrow \infty }{\lim }\frac{y_{n}}{b_{n}}$ exists
if and only if there is the limit $\underset{n\rightarrow \infty
}{\lim }\frac{sgn(y_{n})}{sgn(b_{n})}$. If the last limit does not
exist, then there are two infinite sequences $\tilde{n}=\left\{
n_{k}\right\} _{k\in\mathbb{N}}$ and $\tilde{m}=\left\{
m_{k}\right\} _{k\in\mathbb{N}}$ of natural numbers such that
\begin{equation*}
sgn(y_{n_{k}})= sgn(b_{n_{k}})\quad  \text{ and }\quad
sgn(y_{m_{k}})= sgn(b_{m_{k}}).
\end{equation*}%
for all $k\in\mathbb N$. Consequently, we obtain
\begin{equation*}
\tilde{d}(\tilde{y},\tilde{b})=\underset{k\rightarrow \infty }{
\lim }\frac{\left\vert y_{n_{k}}-b_{n_{k}}\right\vert
}{r_{n_{k}}}=\lim_{ k\rightarrow \infty
}\frac{||y_{n_k}|-|b_{n_k}||}{r_{n_k}}
 =\left\vert
\tilde{d}(\tilde{0},\tilde{y})-\tilde{d}(
\tilde{0},\tilde{b})\right\vert
\end{equation*}
and similarly we have
\begin{equation*}
\tilde{d}(\tilde{y},\tilde{b})=\underset{k\rightarrow \infty}{\lim
}\frac{\left\vert y_{m_{k}}-b_{m_{k}}\right\vert}{r_{m_{k}}} =
\tilde{d}(\tilde{0},\tilde{y})+\tilde{d}( \tilde{0},\tilde{b}).
\end{equation*}%
Thus we have the equality%
\begin{equation*}
\tilde{d}(\tilde{0},\tilde{y})+\tilde{d}( \tilde{0},\tilde{b})
=\left\vert \tilde{d}(\tilde{
0},\tilde{y})-\tilde{d}(\tilde{0},\tilde{b})\right\vert
\end{equation*}%
which implies that
\begin{equation*}
\tilde{d}(\tilde{0},\tilde{y})\wedge \tilde{d}(\tilde{0},
\tilde{b})=0.
\end{equation*}%
It is shown that for every $\tilde{y}\in \tilde{X}_{0,\tilde{r}}$
there is a finite limit $\underset{n\rightarrow
\infty}{\lim}\frac{y_{n}}{b_{n}}$. Conversely, let
$\tilde{y}\in\tilde{X}$ and
\begin{equation}
\underset{n\rightarrow \infty }{\lim
}\frac{y_{n}}{b_{n}}=c\in\mathbb{R}.  \label{4:eq2.4}
\end{equation}%
We must   show that for every $\tilde{x}\in \tilde{X}_{0,
\tilde{r}}$ there is a finite limit $\underset{k\rightarrow \infty
}{ \lim}\frac{\left\vert y_{n}-b_{n}\right\vert}{r_{n}}$, i.e.
$\tilde{x}$ and $\tilde{y}$ are mutually stable w.r.t.
$\tilde{r}$. Since $\tilde{x} \in \tilde{X}_{0,\tilde{r}}$, we
have a finite limit
\begin{equation}
\underset{n\rightarrow \infty }{\lim
}\frac{x_{n}}{b_{n}}=k\in\mathbb{R}.  \label{4:eq2.5}
\end{equation}
Hence
\begin{equation}
\underset{n\rightarrow \infty }{\lim }\frac{\left\vert
y_{n}-x_{n}\right\vert }{\left\vert r_{n}\right\vert }=\underset{
n\rightarrow \infty }{\lim }\frac{\left\vert b_{n}\right\vert
}{r_{n}} \left\vert
\frac{x_{n}}{b_{n}}-\frac{y_{n}}{b_{n}}\right\vert =\widetilde{d}(
\tilde{0},\tilde{b})\left\vert c-k\right\vert \label{4:eq2.6}
\end{equation}%
where constants $c,k$ are defined by \eqref{4:eq2.4} and,
respectively, by \eqref{4:eq2.5}.

(ii) Statement (ii) follows from \eqref{4:eq2.6}.

(iii) Statement (i) implies that the sequence $\tilde{r}^{\ast
}=\left\{ r_{n}^{\ast }\right\} _{n\in\mathbb{N}}$ with
$$
r_n^{\ast}=r_n sgn(b_{n}),\qquad n\in\mathbb{N},
$$
belongs to
$\tilde{X}_{0,\tilde{r}}$. If we take $\tilde{r}^{\ast }$ instead
$\tilde{b}$ in \eqref{4:eq2.4} and \eqref{4:eq2.5}, then we obtain
the mapping $f:\tilde{X}_{0,\tilde{r}}\rightarrow\mathbb{R}$ where
\begin{equation*}
f(\tilde{x})=\underset{n\rightarrow \infty }{\lim }\frac{x_{n}}{
r_{n}^{\ast }},~~\tilde{x}=\left\{ x_{n}\right\}
_{n\in\mathbb{N}}\in \tilde{X}_{0,\tilde{r}}.
\end{equation*}%
It is easy to see that there is a unique mapping $\psi:$ $\Omega
_{0,\tilde{r}}\rightarrow\mathbb{R}$ such that the diagram

\begin{equation}  \label{4:eq2.7}
\begin{diagram}
\node{\tilde X_{0,\tilde r}} \arrow[2]{e,t}{p} \arrow{ese,b}{f}
                             \node[2]{\Omega_{0,\tilde r}}\arrow{s,r}{\psi}
                             \\\node[3]{\mathbb R}
\end{diagram}
%
\end{equation}
is commutative, where $p$ is the metric identification mapping,
see \eqref{1:eq1.4}. Relations \eqref{4:eq2.4}--\eqref{4:eq2.6}
imply that $\psi $ is an isometry. It remains to prove that
$\Omega _{a,\tilde{r}}$ is tangent. Let
$\tilde{n}=\left\{n_{k}\right\}_{k\in\mathbb{N}}$ be a strictly
increasing, infinite sequence of natural numbers and let $
\tilde{r}^{\prime }=\left\{ r_{n_k}\right\}_{k\in\mathbb{N}}$ be
the corresponding subsequence of the normalizing sequence
$\tilde{r}$. If $\tilde X_{0,\tilde r'}$ is a maximal self-stable
family such that
$$
\tilde X_{0,\tilde r'}\supseteq\{\tilde x':\tilde x\in\tilde
X_{0,\tilde r}\},
$$
then, by Statement (i), for every $\tilde x=\{x_k\}_{k\in\mathbb
N}\in\tilde X_{0,\tilde r}$ there is a finite limit
\begin{equation*}
\underset{k\rightarrow \infty }{\lim
}\frac{x_{k}}{r_{n_{k}}sgn(b_{n_k})}:=p.
\end{equation*}%
Define  $\tilde{y}=\left\{y_{n}\right\}_{n\in\mathbb{N}}\in
\tilde{X}$ \ by the rule
\begin{equation*}
y_{n}:=
\begin{cases}
x_{k}& \text{if there is  }n_{k} \text{ such that }n_{k}=n, \\
r_{n}sgn(b_{n})& \text{otherwise}.
\end{cases}
\end{equation*}
A simple calculation shows that
\begin{equation*}
\underset{n\rightarrow \infty }{\lim
}\frac{y_{n}}{r_{n}sgn(b_n)}=p.
\end{equation*}%
Hence, by Statement (i),  $\tilde{y}$ belongs to
$\tilde{X}_{0,\tilde{r}}$. Using Proposition \ref{1:p1.5} we see
that $\Omega _{0,\tilde{r}}$ is tangent.
\end{proof}

\begin{example}
\label{4:e2.5} Let $X=\mathbb{C}$ be the set of all complex
numbers with the usual metric $\left\vert .,.\right\vert $ and the
marked point $0$ and let $\tilde{r}=\left\{ r_{n}\right\}
_{n\in\mathbb{N}}$ be a normalizing sequence.
\end{example}

\begin{proposition} \label{4:p2.6} Let
$\tilde{X}_{0,\tilde{r}}$ be a maximal
self-stable family with the corresponding pretangent space $\Omega _{0,%
\tilde{r}}$. Then $\Omega _{0,\tilde{r}}$ is tangent and isometric
to $\mathbb{C}$.
\end{proposition}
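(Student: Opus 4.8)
The plan is to prove Proposition~\ref{4:p2.6} by reducing the two-dimensional case to the one-dimensional result of Proposition~\ref{4:p2.4}, exploiting the rotational symmetry of $\mathbb C$. The key observation is that $(\mathbb C,|\cdot|)$ is a complete normed space over $\mathbb R$, so I expect the tangent space to ``see'' the full plane. The natural candidate map is
\begin{equation*}
f:\tilde X_{0,\tilde r}\to\mathbb C,\qquad f(\tilde x)=\lim_{n\to\infty}\frac{x_n}{r_n}.
\end{equation*}
First I would show this limit exists for every $\tilde x=\{x_n\}\in\tilde X_{0,\tilde r}$. Writing $x_n=|x_n|e^{i\theta_n}$, the modulus $|x_n|/r_n$ converges to $\tilde d(\tilde 0,\tilde x)$ by the mutual stability with $\tilde a$, so the issue is convergence of the phase $e^{i\theta_n}$ whenever $\tilde d(\tilde 0,\tilde x)\neq 0$. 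This is the analogue of the $\mathrm{sgn}$-argument in Proposition~\ref{4:p2.4}, and it will be the technical heart of the proof.

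\emph{The main obstacle} is exactly this phase-convergence step. The strategy mirrors the real case: if $e^{i\theta_n}$ did not converge, one could extract two subsequences along which $x_n/|x_n|$ tends to distinct unit vectors $u_1\neq u_2$. Fixing a reference element $\tilde b$ with $\tilde d(\tilde 0,\tilde b)\neq 0$ whose phases converge to some unit vector $w$ (such a $\tilde b$ exists, e.g.\ the constant-direction sequence $\{r_n w\}$, which lies in $\tilde X_{0,\tilde r}$ after a maximality/Zorn extension), the quantity $\tilde d(\tilde x,\tilde b)$ would have to equal both $\tilde d(\tilde 0,\tilde x)\,|u_1-w|$ and $\tilde d(\tilde 0,\tilde x)\,|u_2-w|$ in the limit, using
\begin{equation*}
\frac{|x_n-b_n|}{r_n}=\frac{|x_n|}{r_n}\Bigl|\frac{x_n}{|x_n|}-\frac{b_n}{|x_n|}\Bigr|.
\end{equation*}
Choosing $w$ so that $|u_1-w|\neq|u_2-w|$ forces a contradiction with the existence of the single limit defining mutual stability. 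Hence the phase converges and $f$ is well defined. The converse inclusion---that any $\tilde y\in\tilde X$ for which $y_n/r_n$ converges is forced into the maximal family $\tilde X_{0,\tilde r}$---follows by checking mutual stability of $\tilde y$ against every $\tilde x\in\tilde X_{0,\tilde r}$ via the identity $|x_n-y_n|/r_n=|x_n/r_n-y_n/r_n|$ and the triangle inequality in $\mathbb C$.

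Next I would verify that $f$ is distance-preserving, since
\begin{equation*}
\tilde d(\tilde x,\tilde y)=\lim_{n\to\infty}\frac{|x_n-y_n|}{r_n}=\Bigl|\lim_{n\to\infty}\frac{x_n}{r_n}-\lim_{n\to\infty}\frac{y_n}{r_n}\Bigr|=|f(\tilde x)-f(\tilde y)|,
\end{equation*}
so that $f$ passes through the metric identification $p$ to a well-defined isometric embedding $\psi:\Omega_{0,\tilde r}\to\mathbb C$. Surjectivity of $\psi$ is immediate: for any $c\in\mathbb C$ the sequence $\{r_n c\}$ has image $c$ and lies in $\tilde X_{0,\tilde r}$ by the converse part already established. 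Thus $\Omega_{0,\tilde r}$ is isometric to $\mathbb C$. Finally, to show $\Omega_{0,\tilde r}$ is tangent I would invoke Proposition~\ref{1:p1.5}: given any subsequence $\tilde r'=\{r_{n_k}\}$ and any maximal self-stable family $\tilde X_{0,\tilde r'}$ containing $\{\tilde x':\tilde x\in\tilde X_{0,\tilde r}\}$, I must show this inclusion is onto, i.e.\ every element of $\tilde X_{0,\tilde r'}$ arises by restriction. Given $\tilde z=\{z_k\}\in\tilde X_{0,\tilde r'}$ with $\lim_k z_k/r_{n_k}=c$, I define $\tilde y\in\tilde X$ by $y_{n_k}:=z_k$ and $y_n:=r_n c$ for $n\notin\{n_k\}$; then $\lim_n y_n/r_n=c$, so by the converse inclusion $\tilde y\in\tilde X_{0,\tilde r}$ and $\tilde y'=\tilde z$, exactly as in part~(iii) of Proposition~\ref{4:p2.4}. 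This establishes surjectivity of $\mathrm{in}_{\tilde r'}$ and hence tangency.
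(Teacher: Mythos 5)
Your reduction breaks at the very first step: the map $f(\tilde x)=\lim_{n\to\infty}x_n/r_n$ is in general \emph{not defined} on the given maximal self-stable family, and the phase-convergence argument you sketch cannot repair this. A concrete counterexample: fix $\theta\in\,]0,\pi[\,$ and let $\tilde x=\{r_ne^{i(-1)^n\theta}\}_{n\in\mathbb N}$. Then $|x_n|/r_n\equiv1$ and $|x_n-r_n|/r_n\equiv|e^{i\theta}-1|$, so $\{\tilde 0,\tilde r,\tilde x\}$ is self-stable and extends by Zorn's Lemma to a maximal self-stable family $\tilde X_{0,\tilde r}$ in which $x_n/r_n$ has the two distinct limit points $e^{\pm i\theta}$. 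Your contradiction device fails here precisely because the two limit points $u_1=e^{i\theta}$, $u_2=e^{-i\theta}$ are symmetric with respect to the reference direction $w=1$, so $|u_1-\lambda w|=|u_2-\lambda w|$ for every $\lambda>0$ and mutual stability with $\tilde b=\tilde r$ gives no information: a single reference direction can never separate a conjugate pair of phases. Note also that you cannot ``choose $w$'': the family is already maximal, so there is no ``maximality/Zorn extension'' left to perform, and a constant-direction sequence $\{r_nw\}$ with generic $w$ is simply not mutually stable with the $\tilde x$ above (indeed $|x_n-r_nw|/r_n=|e^{i(-1)^n\theta}-w|$ oscillates), hence need not belong to the family. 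For the same reason your surjectivity step --- that $\{r_nc\}$ lies in $\tilde X_{0,\tilde r}$ for every $c\in\mathbb C$ --- is unjustified for the original family.

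This difficulty is exactly what the paper's four-lemma proof is organized around. It first shows (Lemma \ref{4:l2.13}) that every maximal self-stable family contains \emph{two} elements $\tilde x,\tilde y$ in general position, i.e.\ satisfying the strict triangle condition \eqref{4:eq22.8}; the degenerate alternative is excluded by proving the family would then be concentrated along $\tilde r$ up to real multiples, so that $i\tilde r$ could be adjoined, contradicting maximality. Then (Lemmas \ref{4:l2.8} and \ref{4:l2.9}) it conjugates the family by a sequence of plane isometries --- a rotation $z\mapsto e^{-i\beta_n}z$ followed, for some indices $n$, by complex conjugation --- to obtain an isometric family $\tilde X^{\ast}_{0,\tilde r}$ on which the limits $\lim_{n\to\infty}z^{\ast}_n/r_n$ genuinely exist (Remark \ref{4:r2.9}); your map $f$ is well defined only on this normalized family, not on the original one. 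In the example above, the per-index conjugation turns $\{r_ne^{i(-1)^n\theta}\}$ into the convergent $\{r_ne^{i\theta}\}$. Your remaining steps --- the membership criterion via mutual stability, the isometry onto $\mathbb C$, and tangency via surjectivity of $\mathrm{in}_{\tilde r'}$ and Proposition \ref{1:p1.5} --- do match Lemma \ref{4:l2.7} and the conclusion of Lemma \ref{4:l2.10}, but without the nondegenerate-pair lemma and the isometric normalization the proof does not get off the ground.
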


The proof is divided into four lemmas.

\begin{lemma}
\label{4:l2.7} Let $\tilde{x}=\left\{ x_{n}\right\}
_{n\in\mathbb{N}}$ and $\tilde{y}=\left\{ y_{n}\right\} _{n\in
\mathbb{N}}$ be elements of $\tilde{X}_{0,\tilde{r}}$ such that
\begin{equation}
2\max\left\{\tilde{d}(\tilde{0},\tilde{x}),\tilde{d}(
\tilde{0},\tilde{y}),\tilde{d}(\tilde{x},\tilde{y}
)\right\} <\tilde{d}(\tilde{0},\tilde{x})+\tilde{d}(%
\tilde{0},\tilde{y})+\tilde{d}(\tilde{x},\tilde{y}).
\label{4:eq22.8}
\end{equation}%
Then following statements are equivalent for every
$\tilde{z}=\left\{ z_{n}\right\}_{n\in\mathbb{N}}\in
\widetilde{X}:$

(a) $\tilde{z}$ belongs to $\tilde{X}_{0,\tilde{r}};$

(b) There are finite limits
\begin{equation}
\tilde{d}(\tilde{0},\tilde{z})=\underset{n\rightarrow \infty }{%
\lim }\frac{\left\vert z_{n}\right\vert }{r_{n}},~~\tilde{d}(\tilde{x%
},\tilde{z})=\underset{n\rightarrow \infty }{\lim
}\frac{\left\vert
x_{n}-z_{n}\right\vert }{r_{n}}~\text{and}~\ \tilde{d}(\tilde{y},\tilde{z}%
)=\underset{n\rightarrow \infty }{\lim }\frac{\left\vert
y_{n}-z_{n}\right\vert }{r_{n}}.  \label{4:eq22.9}
\end{equation}%
\end{lemma}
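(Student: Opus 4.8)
The implication (a) $\Rightarrow$ (b) is immediate and carries no content: the constant sequence $\tilde 0$ lies in $\tilde X_{0,\tilde r}$ by Proposition \ref{1:p1.2}, and $\tilde x,\tilde y$ lie there by hypothesis, so if $\tilde z\in\tilde X_{0,\tilde r}$ the self-stability of the family yields the three finite limits in \eqref{4:eq22.9}. The substance is the converse, and here I would use maximality: since $\tilde X_{0,\tilde r}$ is maximal self-stable, it suffices to show that $\tilde z$ is mutually stable with \emph{every} $\tilde w=\{w_n\}\in\tilde X_{0,\tilde r}$, i.e. that $\lim_n|w_n-z_n|/r_n$ exists and is finite; maximality then forces $\tilde z\in\tilde X_{0,\tilde r}$.

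The plan is to reconstruct the rescaled point $z_n/r_n$ from its scalar products with $x_n/r_n$ and $y_n/r_n$, exploiting that \eqref{4:eq22.8} makes the limit triangle non-degenerate. Writing $\langle u,v\rangle:=\Re(u\bar v)$ for the real scalar product on $\mathbb C\cong\mathbb R^2$, the polarization identity $|u-v|^2=|u|^2-2\langle u,v\rangle+|v|^2$ together with \eqref{4:eq22.9}, after division by $r_n^2$, gives
$$
\Big\langle\tfrac{z_n}{r_n},\tfrac{x_n}{r_n}\Big\rangle\to\tfrac12\big(\tilde d(\tilde 0,\tilde z)^2+\tilde d(\tilde 0,\tilde x)^2-\tilde d(\tilde x,\tilde z)^2\big)=:A
$$
and likewise $\langle z_n/r_n,\,y_n/r_n\rangle\to B$, both finite. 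The same identity applied to the pair $x_n,y_n$ shows the Gram matrix $G_n$ of $(x_n/r_n,\,y_n/r_n)$ converges to a symmetric matrix $G$; by the Gram--Heron relation $\det G$ equals the squared doubled area of the triangle with side lengths $\tilde d(\tilde 0,\tilde x),\tilde d(\tilde 0,\tilde y),\tilde d(\tilde x,\tilde y)$. Condition \eqref{4:eq22.8} says precisely that all three strict triangle inequalities hold, so this area, hence $\det G$, is strictly positive.

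Consequently, for large $n$ the vectors $x_n/r_n,\,y_n/r_n$ are linearly independent in $\mathbb R^2$, so I can write $z_n/r_n=\alpha_n(x_n/r_n)+\beta_n(y_n/r_n)$; taking scalar products against $x_n/r_n$ and $y_n/r_n$ expresses $(\alpha_n,\beta_n)$ as $G_n^{-1}$ applied to $\big(\langle z_n/r_n,x_n/r_n\rangle,\langle z_n/r_n,y_n/r_n\rangle\big)$. Since $G_n\to G$ with $\det G>0$ and the right-hand vector converges to $(A,B)$, I obtain finite limits $\alpha_n\to\alpha$, $\beta_n\to\beta$. Now for an arbitrary $\tilde w\in\tilde X_{0,\tilde r}$ the limits $\langle w_n/r_n,x_n/r_n\rangle$ and $\langle w_n/r_n,y_n/r_n\rangle$ exist and are finite by the same polarization argument (all of $\tilde d(\tilde 0,\tilde w),\tilde d(\tilde x,\tilde w),\tilde d(\tilde y,\tilde w)$ exist by self-stability), whence
$$
\Big\langle\tfrac{w_n}{r_n},\tfrac{z_n}{r_n}\Big\rangle=\alpha_n\Big\langle\tfrac{w_n}{r_n},\tfrac{x_n}{r_n}\Big\rangle+\beta_n\Big\langle\tfrac{w_n}{r_n},\tfrac{y_n}{r_n}\Big\rangle
$$
converges. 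Then $|w_n-z_n|^2/r_n^2=|w_n/r_n|^2-2\langle w_n/r_n,z_n/r_n\rangle+|z_n/r_n|^2$ converges to a finite limit, and taking square roots gives the required mutual stability of $\tilde z$ and $\tilde w$.

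The main obstacle is exactly the invertibility of the limiting Gram matrix, i.e. the non-degeneracy of the limit triangle, and this is where \eqref{4:eq22.8} is indispensable. If the three limit distances were collinear (one triangle inequality becoming an equality), the fourth point would fail to be determined by its distances to $0,x_n,y_n$ — reflection across the line through them preserves all three distances — and the coefficients $\alpha_n,\beta_n$ need not converge. Everything else reduces to routine limits of sums, products, and quotients of convergent sequences.
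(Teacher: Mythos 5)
Your proof is correct, and it takes a genuinely different route from the paper's. The paper argues in polar coordinates: writing $x_n=|x_n|e^{i\beta_n}$, $y_n=|y_n|e^{i\theta_n}$, $z_n^{(j)}=\bigl|z_n^{(j)}\bigr|e^{i\gamma_n^{(j)}}$, reducing without loss of generality to $\beta_n=0$, and extracting from the assumed distance limits the existence of $\lim_n\cos\theta_n$, $\lim_n\cos\gamma_n^{(j)}$ and $\lim_n\cos(\gamma_n^{(j)}-\theta_n)$; the non-degeneracy \eqref{4:eq22.8} enters there as $\lim_n\cos\theta_n\neq\pm1$, which permits division by $1-\cos^2\theta_n$ and recovery of $\lim_n\sin\gamma_n^{(1)}\sin\gamma_n^{(2)}$, hence of the limit of $\bigl|z_n^{(1)}-z_n^{(2)}\bigr|/r_n$ --- i.e.\ the paper shows that any two sequences satisfying (b) are mutually stable, and since every member of $\tilde X_{0,\tilde r}$ satisfies (b), maximality closes the argument exactly as in your final step. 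Your Gram-matrix argument replaces the trigonometry: polarization converts (b) into convergence of the inner products $\langle z_n/r_n,x_n/r_n\rangle$ and $\langle z_n/r_n,y_n/r_n\rangle$, while \eqref{4:eq22.8}, via the Heron factorization of $\det G$ into the four factors $(\pm R_x\pm R_y+c)$ with all three strict triangle inequalities holding, gives $\det G>0$, so $G_n^{-1}$ converges and the coefficients $\alpha_n,\beta_n$ converge; bilinearity then yields mutual stability with an arbitrary $\tilde w$. What your approach buys is a uniform, coordinate-free treatment with no case-splitting (the paper must dispose separately of the degenerate case $R_{1,z}R_{2,z}=0$ and perform the normalizing rotation), and it is in fact closer in spirit to the Cramer's-rule computation the paper itself uses later in the cone setting of Lemma \ref{8:l2.16}. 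What the paper's fussier trigonometric computation buys is the by-product recorded in Remark \ref{4:r2.9} --- under the normalization $\beta_n=0$, $\theta_n\in[0,\pi]$ the rescaled sequences themselves converge --- on which the proof of Lemma \ref{4:l2.10} later relies; your argument delivers the analogous fact just as cheaply (convergence of $(\alpha_n,\beta_n)$ forces convergence of $z_n/r_n$ whenever $x_n/r_n$ and $y_n/r_n$ converge), but you would need to state this explicitly if your proof were to replace the paper's.
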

\begin{proof}
The implication (a)$\Longrightarrow $(b) is trivial. Suppose that
(b) holds with $\tilde{z}=\tilde{z}_{1}=\left\{
z_{n}^{(1)}\right\} _{n\in\mathbb{N}}$  and with
$\tilde{z}=\tilde{z}_{2}=\left\{ z_{n}^{(2)}\right\}
_{n\in\mathbb{N}}$. We must  prove that there is a finite limit
\begin{equation}
\tilde{d}(\tilde{z}_{1},\tilde{z}_{2})=\underset{n\rightarrow
\infty }{\lim }\frac{\left\vert z_{n}^{(1)}-z_{n}^{(2)}\right\vert
}{r_{n}}. \label{4:eq2.10}
\end{equation}%
Write%
\begin{equation}
x_{n}:=\left\vert x_{n}\right\vert e^{i\beta
_{n}},~y_{n}:=\left\vert y_{n}\right\vert e^{i\theta _{n}},~\
z_{n}^{(1)}:=\left\vert z_{n}^{(1)}\right\vert e^{i\gamma
_{n}^{(1)}},~\ z_{n}^{(2)}:=\left\vert z_{n}^{(2)}\right\vert
e^{i\gamma _{n}^{(2)}}. \label{4:eq22.11}
\end{equation}%
Since $\tilde{x},\tilde{y}\in \tilde{X}_{0,\tilde{r}}$ and the
first relation in \eqref{4:eq22.9} holds,  there are finite limits
\begin{equation*}
R_{x}:=\underset{n\rightarrow \infty }{\lim }\frac{\left\vert
x_{n}\right\vert }{r_{n}},~R_{y}:=\underset{n\rightarrow \infty }{\lim }%
\frac{\left\vert y_{n}\right\vert
}{r_{n}},~R_{1,z}:=\underset{n\rightarrow
\infty }{\lim }\frac{\left\vert z_{n}^{(1)}\right\vert }{r_{n}},~~R_{2,z}:=%
\underset{n\rightarrow \infty }{\lim }\frac{\left\vert
z_{n}^{(2)}\right\vert }{r_{n}}.
\end{equation*}%
Consequently we have the limit relations%
\begin{equation*}
\tilde{d}(\tilde{x},\tilde{y})=\underset{n\rightarrow \infty }{%
\lim }\left\vert R_{x}e^{i\beta _{n}}-R_{y}e^{i\theta _{n}}\right\vert =%
\underset{n\rightarrow \infty }{\lim }\left\vert
R_{x}-R_{y}e^{i(\theta _{n}-\beta _{n})}\right\vert ,
\end{equation*}%
\begin{equation*}
\tilde{d}(\tilde{z}_1,\tilde{y})=\underset{n\rightarrow \infty }{%
\lim }\left\vert R_{1,z}e^{i(\gamma _{n}^{(1)}-\beta
_{n})}-R_{y}e^{i(\theta _{n}-\beta _{n})}\right\vert,
\end{equation*}
\begin{equation*}
\tilde{d}(\tilde{z}_2,\tilde{y})=\underset{n\rightarrow \infty }{%
\lim }\left\vert R_{2,z}e^{i(\gamma _{n}^{(2)}-\beta
_{n})}-R_{y}e^{i(\theta _{n}-\beta _{n})}\right\vert ,
\end{equation*}%
\begin{equation}
\tilde{d}(\tilde{x},\tilde{z}_{1})=\underset{n\rightarrow \infty
}{\lim }\left\vert R_{1,z}e^{i(\gamma _{n}^{(1)}-\beta
_{n})}-R_{x}\right\vert,\,\,
\tilde{d}(\tilde{x},\tilde{z}_{2})=\underset{n\rightarrow \infty
}{\lim }\left\vert R_{2,z}e^{i(\gamma _{n}^{(2)}-\beta
_{n})}-R_{x}\right\vert  \label{4:eq22.12}
\end{equation}%
and must prove the existence of
\begin{equation}
\tilde{d}(\tilde{z}_{1},\tilde{z}_{2})=\underset{n\rightarrow
\infty }{\lim }\left\vert R_{2,z}e^{i(\gamma _{n}^{(2)}-\beta
_{n})}-R_{1,z}e^{i(\gamma _{n}^{(2)}-\beta _{n})}\right\vert .
\label{4:eq22.13}
\end{equation}%
It is clear from \eqref{4:eq22.12}, \eqref{4:eq22.13} that,
without loss of generality, it is sufficient to take $\beta
_{n}=0$ for all $n\in \mathbb{N} $.

Moreover, \eqref{4:eq22.13} evidently holds if $R_{1,z}\cdot
R_{2,z}=0.$ Hence we may also put
\begin{equation}
R_{1,z}\neq 0\neq R_{2,z}.  \label{4:eq2.14}
\end{equation}%
Note  that \eqref{4:eq22.8} implies%
\begin{equation}
R_{x}\neq 0\neq R_{y}.  \label{4:eq22.15}
\end{equation}%
Since%
\begin{equation*}
\left\vert R_{x}-R_{y}e^{i\theta _{n}}\right\vert
^{2}=R_{x}^{2}+R_{y}^{2}-2R_{x}R_{y}\cos (\theta _{n}),
\end{equation*}%
the first relation in \eqref{4:eq22.12} and \eqref{4:eq22.15}
imply that there exists $\underset{n\rightarrow \infty }{\lim
}\cos (\theta _{n})$ and, in addition, if follows from
\eqref{4:eq22.8} that
\begin{equation}\label{4:eq22.16}
\underset{n\rightarrow \infty }{\lim }\cos (\theta _{n})\neq \pm
1,
\end{equation}%
see Remark \ref{4:r2.8} below. Similarly using \eqref{4:eq2.14},
\eqref{4:eq22.15} and last two relations from \eqref{4:eq22.12} we
see that there are limits
\begin{equation}
\underset{n\rightarrow \infty }{\lim }\cos (\gamma _{n}^{(1)})\text{ and }%
\underset{n\rightarrow \infty }{\lim }\cos (\gamma _{n}^{(2)}).
\label{4:eq22.17}
\end{equation}%
The remaining relations from \eqref{4:eq22.12} imply the existence
of
\begin{equation}
\underset{n\rightarrow \infty }{\lim }\cos (\gamma _{n}^{(1)}-\theta _{n})%
\text{ \ and }\underset{n\rightarrow \infty }{\lim }\cos (\gamma
_{n}^{(2)}-\theta _{n}).  \label{4:eq22.18}
\end{equation}%
Since there are limits \eqref{4:eq22.17} and
\begin{equation*}
\left\vert R_{2,z}e^{i\gamma _{n}^{(2)}}-R_{1,z}e^{i\gamma
_{n}^{(1)}}\right\vert
^{2}=R_{2,z}^{2}+R_{1,z}^{2}-2R_{1,z}R_{2,z}\cos (\gamma
_{n}^{(1)}-\gamma _{n}^{(2)})
\end{equation*}
and
\begin{equation*}
\cos (\gamma _{n}^{(1)}-\gamma _{n}^{(2)})=\cos \gamma
_{n}^{(1)}\cos \gamma _{n}^{(2)}+\sin \gamma _{n}^{(1)}\sin \gamma
_{n}^{(2)},
\end{equation*}%
limit \eqref{4:eq22.13} exists if and only if there is the limit
\begin{equation}
\underset{n\rightarrow \infty }{\lim }\sin \gamma _{n}^{(1)}\sin
\gamma _{n}^{(2)}.  \label{4:eq22.19}
\end{equation}%
Using \eqref{4:eq22.18} and \eqref{4:eq22.16} we obtain%
\begin{equation*}
\underset{n\rightarrow \infty }{\lim }\sin \gamma _{n}^{(1)}\sin
\gamma _{n}^{(2)}=\frac{\underset{n\rightarrow \infty }{\lim
}(\sin \gamma
_{n}^{(1)}\sin \theta _{n})(\sin \gamma _{n}^{(2)}\sin \theta _{n})}{%
\underset{n\rightarrow \infty }{\lim }(1-\cos ^{2}\theta _{n})}
\end{equation*}%
and
\begin{equation*}
\underset{n\rightarrow \infty }{\lim }\sin \gamma _{n}^{(j)}\sin \theta _{n}=%
\underset{n\rightarrow \infty }{\lim }\cos (\gamma _{n}^{(j)}-\theta _{n})-%
\underset{n\rightarrow \infty }{\lim }\cos\theta _{n}\cos \gamma
_{n}^{(j)}
\end{equation*}%
for $j=1,2.$ It implies the existence of \eqref{4:eq22.19}.
\end{proof}
\begin{remark}
\label{4:r2.8} Menger's notion of betweenness is well known for
the metric spaces, see, for example, \cite[p.~55]{Pa}. For points
$x,y,z$ belonging to a pseudometric space $(Y,d)$, we may say that
$x$ lies between $y$ and $z$ if
\begin{equation*}
 d(x,z)\cdot d(y,z)\neq 0\text{ and
}d(y,z)=d(y,x)+d(x,z).
\end{equation*}%
Suppose in Lemma \ref{4:l2.7} we have $\tilde d(\tilde 0, \tilde
x)\cdot \tilde d(\tilde 0,\tilde y)\neq 0,$ then inequality
\eqref{4:eq22.8} does not hold if and only if some point from the
set $\left\{\tilde{0},\tilde{x},\tilde{y} \right\}$ lies between
two other points of this set (in the pseudometric space
$(\tilde{X}_{0,\tilde{r}},\tilde{d})$).
\end{remark}

\begin{remark}
\label{4:r2.9} For the future it is useful to note that if $\beta
_{n}=0$ and $\theta _{n}\in \left[ 0,\pi \right] $ for all
$n\in\mathbb{N}$, then the sequences%
\begin{equation*}
\Big\{ \frac{|y_{n}|}{ r_{n} }e^{i\theta _{n}}\Big\}
_{n\in\mathbb{N}},\quad \bigg\{ \frac{\big\vert
z_{n}^{(1)}\big\vert }{r_{n}}e^{i\gamma _{n}^{(1)}}\bigg\} _{n\in
\mathbb{N}}\quad\text{ and }\quad\bigg\{ \frac{\big\vert z_{n}^{(2)}\big\vert }{r_{n}}%
e^{i\gamma _{n}^{(2)}}\bigg\} _{n\in\mathbb{N}},
\end{equation*}%
see, \eqref{4:eq22.11}, are convergent. Indeed, the function%
\begin{equation*}
\left[ 0,\pi \right] \ni t\longmapsto \cos t\in \left[ -1,1\right]
\end{equation*}%
is a homeomorphism. Hence $\left\{ \theta _{n}\right\}
_{n\in\mathbb{N}}$ is convergent because there is
$\underset{n\rightarrow \infty }{\lim} \cos \theta _{n}$. It
implies the convergence of  $\{\frac{|y_{n}|}{r_{n} }e^{i\theta
_{n}}\}_{n\in\mathbb N}$. Moreover, it
follows from \eqref{4:eq22.16} that%
\begin{equation*}
\underset{n\rightarrow \infty }{\lim }\sin (\theta _{n})\neq 0.
\end{equation*}%
This relation and the existence of limits \eqref{4:eq22.18},
\eqref{4:eq22.17} imply the convergence of the sequences \ $\{
\sin \gamma _{n}^{(1)}\}_{n\in\mathbb{N}}$ and $\{ \sin \gamma
_{n}^{(2)}\} _{n\in\mathbb{N}}$. Consequently $\{ \frac{\vert
z_{n}^{(1)}\vert}{r_{n}}e^{i\gamma _{n}^{(1)}}\}
_{n\in\mathbb{N}}$ and $\{\frac{\vert z_{n}^{(2)}\vert
}{r_{n}}e^{i\gamma _{n}^{(2)}}\} _{n\in \mathbb{N}}$ are also
convergent.
\end{remark}

\begin{lemma}
\label{4:l2.9} Let $X=\mathbb{C}$ be the set of all complex
numbers with the usual metric $\left\vert .,.\right\vert $ and
with the marked point $0$ and let
$\tilde{r}=\left\{r_{n}\right\}_{n\in \mathbb{N}} $ be a
normalizing sequence.  Let $\tilde{X}_{0, \tilde{r}}$ be a maximal
self-stable family for which \eqref{4:eq22.8} holds with some
$\tilde{x},\tilde{y}\in \tilde{X}_{0,\tilde{r}}$. Then there is a
maximal self-stable family
$\tilde{X}_{0,\tilde{r}}^{\ast}\subseteq\tilde X$ such that:

(i) $\tilde{X}_{0,\tilde{r}}^{\ast}$ and $\tilde{X}_{0,
\tilde{r}}$ are isometric;

(ii) There are $\tilde{x}^{\ast
}=\left\{x_{n}^{\ast}\right\}_{n\in\mathbb{N}}$ and
$\tilde{y}^{*}=\left\{ y_{n}^{\ast }\right\}_{n\in \mathbb{N}}$ in
$ \tilde{X}_{0,\tilde{r}}^{\ast}$ for which the inequality
\begin{equation}
2\max \left\{\tilde{d}(\tilde{0},\tilde{x}^{\ast}),\tilde{d
}(\tilde{0},\tilde{y}^{\ast }),\tilde{d}(\tilde{x}^{\ast },
\tilde{y}^{\ast })\right\} <\tilde{d}(\tilde{0},\tilde{x}%
^{\ast })+\tilde{d}(\tilde{0},\tilde{y}^{*})+\tilde{d}(
\tilde{x}^{*},\tilde{y}^{*})  \label{4:eq22.21}
\end{equation}%
holds and
\begin{equation*}
x_{n}^{\ast}=\left\vert x_{n}^{\ast}\right\vert ,~y_{n}^{\ast
}=\left\vert y_{n}^{\ast}\right\vert e^{i\theta_n} \text{ and
}\theta _{n}\in \left[ 0,\pi \right]
\end{equation*}%
for all $n\in\mathbb{N}$.
\end{lemma}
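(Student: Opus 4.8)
The plan is to realize $\tilde X^*_{0,\tilde r}$ as the image $\tilde f(\tilde X_{0,\tilde r})$ of the given family under a carefully chosen sequence $\tilde f=\{f_n\}_{n\in\mathbb N}$ of isometries of $\mathbb C$ fixing $0$, so that Lemma~\ref{4:l2.8} delivers most of the conclusion for free. Recall that the isometries of $(\mathbb C,|.,.|)$ fixing $0$ are exactly the rotations $z\mapsto e^{i\varphi}z$ and the reflections $z\mapsto e^{i\varphi}\bar z$; all are distance-preserving and onto, hence admissible in Lemma~\ref{4:l2.8}. For each $n$ I would pick such an $f_n$ that rotates $x_n$ onto the nonnegative real axis and, when needed, reflects across it so that $y_n$ is carried into the closed upper half-plane.

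Concretely, writing $x_n=|x_n|e^{i\beta_n}$ and $y_n=|y_n|e^{i\theta_n}$ as in \eqref{4:eq22.11}, for indices $n$ with $x_n\ne0$ I would first apply $g_n(z):=e^{-i\beta_n}z$, which sends $x_n$ to $|x_n|\ge0$ and $y_n$ to $|y_n|e^{i(\theta_n-\beta_n)}$. Letting $\varphi_n\in[0,2\pi)$ represent $\theta_n-\beta_n$, I set $f_n:=g_n$ if $\varphi_n\in[0,\pi]$, and $f_n(z):=\overline{g_n(z)}=e^{i\beta_n}\bar z$ if $\varphi_n\in(\pi,2\pi)$; the latter fixes $|x_n|\ge0$ and sends $y_n$ to $|y_n|e^{i(2\pi-\varphi_n)}$ with $2\pi-\varphi_n\in(0,\pi)$. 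Since \eqref{4:eq22.8} forces $\tilde d(\tilde 0,\tilde x)=R_x\ne0$ (cf.\ \eqref{4:eq22.15}), one has $x_n\ne0$ for all large $n$, so only finitely many indices are exceptional; for those I would choose any rotation or reflection placing $y_n$ in the closed upper half-plane, which is harmless because $x_n^*:=f_n(x_n)=0=|x_n^*|$ already.

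Setting $\tilde X^*_{0,\tilde r}:=\tilde f(\tilde X_{0,\tilde r})$, $\tilde x^*:=\tilde f(\tilde x)$ and $\tilde y^*:=\tilde f(\tilde y)$, statement (i) is immediate from Lemma~\ref{4:l2.8}, while the relations $x_n^*=|x_n^*|$ and $y_n^*=|y_n^*|e^{i\theta_n}$ with $\theta_n\in[0,\pi]$ hold by construction, giving the angular part of (ii). For \eqref{4:eq22.21} I would use that each $f_n$ fixes $0$, so $\tilde f(\tilde 0)=\tilde 0$, and that the isometry from Lemma~\ref{4:l2.8} preserves the three pseudodistances among $\tilde 0,\tilde x,\tilde y$; thus $\tilde d(\tilde 0,\tilde x^*)=\tilde d(\tilde 0,\tilde x)$, $\tilde d(\tilde 0,\tilde y^*)=\tilde d(\tilde 0,\tilde y)$ and $\tilde d(\tilde x^*,\tilde y^*)=\tilde d(\tilde x,\tilde y)$, whence \eqref{4:eq22.21} is nothing but \eqref{4:eq22.8}. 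The only delicate point—the main obstacle, though a minor one—is assembling the $f_n$ uniformly over all $n$ and checking that the reflection truly lands $y_n$ in the range $[0,\pi]$ while keeping $x_n^*$ real and nonnegative; once this is arranged correctly, Lemma~\ref{4:l2.8} carries the rest.
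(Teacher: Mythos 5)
Your proposal is correct and follows essentially the same route as the paper: the paper likewise rotates each $x_n$ onto the nonnegative real axis via $f_n(z)=e^{-i\beta_n}z$, then composes with conjugation $g_n(z)=\overline z$ exactly when $\theta_n-\beta_n$ falls outside $[0,\pi]$, and invokes Lemma~\ref{4:l2.8} to conclude that $\tilde g(\tilde f(\tilde X_{0,\tilde r}))$ is maximal self-stable and isometric to $\tilde X_{0,\tilde r}$, with \eqref{4:eq22.21} inherited from \eqref{4:eq22.8} since all $f_n,g_n$ fix $0$. Your merging of rotation and reflection into a single sequence of isometries, and your explicit handling of the finitely many indices with $x_n=0$ (which the paper passes over silently), are only cosmetic differences.
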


\begin{proof}
Let $\tilde{x}=\left\{ \left\vert x_{n}\right\vert e^{i\beta
_{n}}\right\}_{n\in \mathbb{N}}$ and $\tilde{y}=\left\{ \left\vert
y_{n}\right\vert e^{i\theta _{n}}\right\}_{n\in \mathbb{N}}$ be
elements of $\tilde{X}_{0,\tilde{r}}$ for which \eqref{4:eq22.8}
holds. Consider the sequence $\tilde{f}=\left\{ f_{n}\right\}
_{n\in\mathbb{N}}$ of the isometries

\begin{equation*}
f_n:\mathbb{C}\to\mathbb{C},\qquad f_{n}(z)=e^{-\beta_{n}}z.
\end{equation*}%
Then we have
\begin{equation*}
\tilde{f}(\tilde{x})=\left\{ \left\vert x_{n}\right\vert
\right\}_{n\in \mathbb{N}} \text{ and
}\tilde{f}(\tilde{y})=\left\{ \left\vert y_{n}\right\vert
e^{i(\theta _{n}-\beta _{n})}\right\} _{n\in\mathbb{N}}.
\end{equation*}%
We may assume that
\begin{equation*}
-\pi <\theta _{n}-\beta _{n}\leq \pi
\end{equation*}%
for all $n\in\mathbb{N}$. Define a new sequence $\tilde{g}$ of
isometries $g_{n}$ by the rule
\begin{equation*}
g_{n}(z):=\left\{
\begin{array}{ccc}
z & \text{if} & 0\leq \theta _{n}-\beta _{n}\leq \pi , \\
\overline{z} & \text{if} & -\pi <\theta _{n}-\beta _{n}<0.%
\end{array}%
\right.
\end{equation*}%
Using Lemma \ref{4:l2.8} we see that the family
\begin{equation*}
\tilde{X}_{0,\tilde{r}}^{\ast }:=\tilde{g}(\tilde{f}(
\tilde{X}_{0,\tilde{r}}))
\end{equation*}%
satisfies all desirable conditions with
\begin{equation*}
\tilde{x}^{\ast}:=\left\{ g_{n}(f_{n}(x_{n}))\right\}
_{n\in\mathbb{N}}\text{ and }\tilde{y}^{\ast }:=\left\{
g_{n}(f_{n}(y_{n}))\right\} _{n\in\mathbb{N}}.
\end{equation*}
\end{proof}

\begin{lemma}
\label{4:l2.10} Let $X=\mathbb{C}$ be the set of all complex
numbers with the usual metric $\left\vert .,.\right\vert $ and
$\tilde{r}=\left\{ r_{n}\right\}_{n\in\mathbb{N}}$ a normalizing
sequence and $\tilde{X}_{0,\tilde{r}}$ a maximal self-stable
family for which the conditions of Lemma \ref{4:l2.7} are
satisfied. If $\Omega _{0,\widetilde{r}}$ is a pretangent spaces
which corresponds $\tilde{X}_{0,\widetilde{r}},$ then
$\Omega_{0,\tilde{r}}$ is tangent and isometric to $\mathbb{C}$.
\end{lemma}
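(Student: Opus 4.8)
The plan is to bring the family into the normal form furnished by Lemma~\ref{4:l2.9} and then to read both the isometry and the tangency essentially off Lemma~\ref{4:l2.7} and Remark~\ref{4:r2.9}. Since $\tilde X_{0,\tilde r}$ satisfies the hypotheses of Lemma~\ref{4:l2.7}, I would first apply Lemma~\ref{4:l2.9} to replace it by an isometric maximal self-stable family containing $\tilde x^{\ast},\tilde y^{\ast}$ that satisfy \eqref{4:eq22.8} and have the special form $x_n^{\ast}=|x_n^{\ast}|$, $y_n^{\ast}=|y_n^{\ast}|e^{i\theta_n}$ with $\theta_n\in[0,\pi]$. By Lemma~\ref{4:l2.8} the two metric identifications are isometric and are simultaneously tangent or not, so it suffices to prove the assertion for a family already in this normal form; from now on I drop the stars and assume $\beta_n=0$ and $\theta_n\in[0,\pi]$ for all $n$.

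\textbf{Isometry with $\mathbb C$.} For every $\tilde z=\{z_n\}\in\tilde X_{0,\tilde r}$ Lemma~\ref{4:l2.7} supplies the three finite limits \eqref{4:eq22.9}, whence Remark~\ref{4:r2.9} gives that $\{z_n/r_n\}_{n\in\mathbb N}$ converges in $\mathbb C$. I would therefore set
\begin{equation*}
f:\tilde X_{0,\tilde r}\to\mathbb C,\qquad f(\tilde z)=\lim_{n\to\infty}\frac{z_n}{r_n}.
\end{equation*}
Since $\tilde d(\tilde z_1,\tilde z_2)=\lim_n|z_n^{(1)}-z_n^{(2)}|/r_n=|f(\tilde z_1)-f(\tilde z_2)|$, the pseudodistance $\tilde d(\tilde z_1,\tilde z_2)$ vanishes exactly when $f(\tilde z_1)=f(\tilde z_2)$, so $f$ passes to a well-defined, distance-preserving $\psi:\Omega_{0,\tilde r}\to\mathbb C$ with $\psi\circ p=f$, just as in the diagram \eqref{4:eq2.7}. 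Only surjectivity needs a construction: given $w\in\mathbb C$, put $\tilde z=\{w r_n\}$; then $|z_n|/r_n\to|w|$, while $|x_n-z_n|/r_n=|x_n/r_n-w|$ and $|y_n-z_n|/r_n=|y_n/r_n-w|$ both converge (the sequences $\{x_n/r_n\}$ and $\{y_n/r_n\}$ being convergent by Remark~\ref{4:r2.9}). By Lemma~\ref{4:l2.7} this places $\tilde z$ in $\tilde X_{0,\tilde r}$ with $f(\tilde z)=w$, so $\psi$ is an isometry and $\Omega_{0,\tilde r}\cong\mathbb C$.

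\textbf{Tangency.} Following the scheme of Proposition~\ref{4:p2.4}(iii), I would verify condition (ii) of Proposition~\ref{1:p1.5}. Fix a subsequence $\tilde r'=\{r_{n_k}\}_{k\in\mathbb N}$ and let $\tilde X_{0,\tilde r'}$ be any maximal self-stable family (w.r.t. $\tilde r'$) with $\{\tilde z':\tilde z\in\tilde X_{0,\tilde r}\}\subseteq\tilde X_{0,\tilde r'}$; I claim equality. As $\{|x_n|/r_n\}$, $\{|y_n|/r_n\}$ and $\{|x_n-y_n|/r_n\}$ converge, their subsequential limits are unchanged, so $\tilde x',\tilde y'$ still satisfy \eqref{4:eq22.8} w.r.t. $\tilde r'$ and Lemma~\ref{4:l2.7} applies to $\tilde X_{0,\tilde r'}$. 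Take $\tilde w=\{w_k\}\in\tilde X_{0,\tilde r'}$; Remark~\ref{4:r2.9} read along $\tilde r'$ shows $\{w_k/r_{n_k}\}$ converges to some $w\in\mathbb C$. Define $\tilde z=\{z_n\}$ by $z_{n_k}=w_k$ and $z_n=w r_n$ for $n\notin\{n_k\}$; then $z_n/r_n\to w$, so $\tilde z\in\tilde X_{0,\tilde r}$ by the surjectivity computation above, while $\tilde z'=\tilde w$. Hence $\tilde w\in\{\tilde z':\tilde z\in\tilde X_{0,\tilde r}\}$, the restricted family is maximal self-stable, and Proposition~\ref{1:p1.5} yields that $\Omega_{0,\tilde r}$ is tangent.

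The main obstacle has in fact already been dispatched: the delicate analytic point, namely convergence of $\{z_n/r_n\}$ for every member of the family, is exactly the content of Lemma~\ref{4:l2.7} together with Remark~\ref{4:r2.9}, so what remains is bookkeeping. The step I would watch most carefully is the tangency argument, where one must confirm that the non-degeneracy inequality \eqref{4:eq22.8} genuinely survives passage to the subsequence (it does, because its three governing sequences converge, so that Lemma~\ref{4:l2.7} is available w.r.t. $\tilde r'$) and that the off-subsequence filling $z_n=w r_n$ keeps $\tilde z$ inside $\tilde X_{0,\tilde r}$ while restricting back to the prescribed $\tilde w$.
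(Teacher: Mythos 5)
Your proof is correct, and it reproduces the paper's argument almost step for step in its first two stages: the reduction to normal form via Lemma \ref{4:l2.9} combined with the transfer principle of Lemma \ref{4:l2.8}, and the isometry part --- convergence of $z_n/r_n$ from Lemma \ref{4:l2.7} together with Remark \ref{4:r2.9}, the induced map $f$, and surjectivity via the test sequence $\{r_np\}_{n\in\mathbb N}$ certified by the implication (b)$\Rightarrow$(a) of Lemma \ref{4:l2.7} --- is exactly what the paper does. Where you genuinely diverge is the tangency step. The paper builds the analogous surjection $f'$ for the family $\tilde X^*_{0,\tilde r'}$, introduces the isometries $is$ and $is'$, and verifies through the commutative diagrams \eqref{4:eq2.25}--\eqref{4:eq2.26} the identity \eqref{4:eq22.27*}, concluding that $em'$ is surjective because $f$ and $is'$ are; tangency then follows from condition (iii) of Proposition \ref{1:p1.5}. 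You instead verify conditions (ii)/(iv) of Proposition \ref{1:p1.5} directly: given $\tilde w=\{w_k\}\in\tilde X_{0,\tilde r'}$, you extract the limit $w$ of $w_k/r_{n_k}$ (Remark \ref{4:r2.9} does apply along $\tilde r'$, since the restrictions $\tilde x',\tilde y'$ keep the normal form and, by \eqref{1:eq1.3}, all $\tilde d$-values and hence inequality \eqref{4:eq22.8} survive restriction) and then fill in $z_n=wr_n$ off the subsequence, so that $z_n/r_n\to w$, whence $\tilde z\in\tilde X_{0,\tilde r}$ by Lemma \ref{4:l2.7} and $\tilde z'=\tilde w$. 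This is precisely the filling device the paper itself uses for $\mathbb R$ in the proof of Proposition \ref{4:p2.4}(iii), transplanted to $\mathbb C$; it is more elementary and self-contained, whereas the paper's diagrammatic route has the merit of isolating the commutativity machinery that is reused verbatim in the proof of Proposition \ref{8:p2.18} for general cones. Both arguments are complete; your explicit check that the non-degeneracy condition \eqref{4:eq22.8} passes to the subsequence closes the only point at which the filling argument could have failed.
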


\begin{proof}
Let $\tilde{x}$ and $\tilde{y}$ be two elements of $\tilde{X}_{0,
\tilde{r}}$ for which \eqref{4:eq22.8} holds. By
Lemma~\ref{4:l2.9} there exists a maximal self-stable family
$\tilde{X}_{0,\tilde{r} }^{\ast }\subseteq\tilde X$ which is
isometric to $\tilde{X}_{0,\tilde{r}}$ and contains
$\tilde{x}^{\ast }=\left\{ x_{n}^{\ast }\right\}
_{n\in\mathbb{N}}$ and
$\tilde{y}^{\ast}=\left\{y_{n}^{\ast}\right\}_{n\in\mathbb{N}}$
such that \eqref{4:eq22.21} holds and
\begin{equation}
x_{n}^{\ast }=\left\vert x_{n}^{\ast }\right\vert ,~~y_{n}^{\ast
}=\left\vert y_{n}^{\ast }\right\vert e^{i\theta _{n}},~\theta
_{n}\in \left[ 0,\pi \right]   \label{4:eq22.22}
\end{equation}%
for all $n\in\mathbb{N}.$ Relations \eqref{4:eq22.22} imply that
for every \ $\tilde{z}^{\ast }=\left\{ z_{n}^{\ast }\right\}
_{n\in \mathbb{N}
}\in \tilde{X}_{0,\tilde{r}}^{*}$ the sequence%
\begin{equation*}
\frac{\tilde z^{*}}{\tilde r}:=\left\{\frac{z_{n}^{\ast
}}{r_{n}}\right\}_{n\in \mathbb{N} }
\end{equation*}%
is convergent, see Remark \ref{4:r2.9} . Write%
\begin{equation*}
z^{\ast}:=\underset{n\rightarrow \infty }{\lim }\frac{z_{n}^{\ast
}}{r_{n}}
\end{equation*}%
for every $\tilde{z}^{\ast }=\left\{ z_{n}^{\ast }\right\}
_{n\in\mathbb{N}}\in \tilde{X}_{0,\tilde{r}}^{*}$. In particular,
we have
\begin{equation}
x^{\ast}:=\underset{n\rightarrow \infty }{\lim }\frac{x_{n}^{\ast
}}{r_{n}}\,\,\, \text{and}\,\,\, y^{\ast }:=\underset{n\rightarrow
\infty }{\lim }\frac{y_{n}^{\ast }}{r_{n}}.  \label{4:eq22.23}
\end{equation}%
We claim that the function%
\begin{equation}
\tilde{X}_{0,\tilde{r}}^{*}\ni \tilde{z}^{\ast }\overset{f}{%
\longmapsto}z^{\ast }\in \mathbb{C}
\label{4:eq22.24}
\end{equation}%
is distance-preserving and onto. (It immediately implies that
$\Omega _{0,\tilde{r}}^{\ast}$, the metric identification of
$\tilde{X}_{0, \tilde{r}}^{*}$, and $\mathbb{C}$ are isometric, so
$\Omega _{0,\tilde{r}}$ also is isometric to $\mathbb{C}.$)
Indeed, if
$\tilde{w}^{\ast}=\left\{w_{n}\right\}_{n\in\mathbb{N}}\in
\tilde{X}_{0,\tilde{r}}^{\ast}$ ,  then
\begin{equation*}
\tilde{d}(\tilde{w}^{\ast },\tilde{z}^{\ast})=\underset{%
n\rightarrow \infty}{\lim}\frac{\left\vert w_{n}^{\ast
}-z_{n}^{\ast
}\right\vert }{r_{n}}=\underset{n\rightarrow \infty }{\lim }\left\vert \frac{%
w_{n}^{\ast }}{r_{n}}-\frac{z_{n}^{\ast }}{r_{n}}\right\vert
=\left\vert w^{\ast }-z^{\ast }\right\vert .
\end{equation*}%
Consequently it is sufficient to show that for every
$p\in\mathbb{C}$ there is $\tilde{p}^{\ast}\in
\tilde{X}_{0,\tilde{r}}^{\ast}$ such that $p^{\ast}=p.$ Write
\begin{equation*}
\tilde{p}^{\ast}=\left\{r_{n}p\right\}_{n\in\mathbb{N}}.
\end{equation*}%
It is clear that
\begin{equation*}
p^{\ast}=\underset{n\rightarrow \infty }{\lim }\frac{r_{n}p}{%
r_{n}}=p,
\end{equation*}%
thus it is enough to  prove that $\tilde{p}^{\ast }\in
\tilde{X}_{0,\tilde{r}}^{\ast}$.  It follows from \eqref{4:eq22.23} that%
\begin{equation*}
\underset{n\rightarrow \infty}{\lim}\frac{\left\vert r_{n}x^{\ast
}-x_{n}^{\ast }\right\vert }{r_{n}}=\underset{n\rightarrow \infty}{\lim}%
\frac{\left\vert r_{n}y^{\ast }-y_{n}^{\ast }\right\vert
}{r_{n}}=0.
\end{equation*}%
Hence%
\begin{equation*}
\tilde{d}(\tilde{p}^{\ast },\tilde{x}^{\ast })=\underset{%
n\rightarrow \infty }{\lim }\frac{\left\vert r_{n}p-x_{n}^{\ast
}\right\vert }{r_{n}}=\underset{n\rightarrow \infty }{\lim
}\frac{\left\vert r_{n}p-r_{n}x^{\ast }\right\vert
}{r_{n}}=\left\vert p-x^{\ast }\right\vert
\end{equation*}%
and similarly we have $\tilde{d}(\tilde{p}^{\ast },\tilde{y}%
^{\ast })=\left\vert p-\tilde{y}^{\ast }\right\vert .$ Therefore,
by Lemma~\ref{4:l2.7}, $\tilde{p}^{\ast }\in
\tilde{X}_{0,\tilde{r}}^{\ast }.$

It remains to show that $\Omega_{0,\tilde{r}}^{\ast }$ is tangent,
because, by Lemma \ref{4:l2.8}, in this case $\Omega_{0,\tilde r}$
is also tangent. Let $\tilde{r}^{\prime }=\left\{
r_{n_{k}}\right\}_{k\in\mathbb{N}}$ be a subsequence of
$\tilde{r}$ and let $\tilde{X}_{0,\tilde{r }^{\prime}}^{\ast}$ be
maximal self-stable family such that
\begin{equation*}
\tilde{X}_{0,\tilde{r}^{\prime }}^{\ast}\supseteq \left\{ \left\{
x_{n_{k}}^{\ast }\right\}_{k\in\mathbb{N}}:\left\{ x_{n}^{\ast
}\right\}_{n\in\mathbb{N}}\in \tilde{X}_{0,\tilde{r}}^{\ast
}\right\}.
\end{equation*}%
Then $\tilde{X}_{0,\tilde{r}^{\prime}}^{\ast}$ satisfies all
conditions of the lemma  which is being proved. Hence,
similarly \eqref{4:eq22.24}, we can define a function%
\begin{equation*}
\tilde{X}_{0,\tilde{r}^{\prime }}^{\ast }\ni \tilde{z}^{\ast }%
\overset{f^{\prime }}{\longmapsto}z^{\ast }\in \mathbb{C} ,\qquad
f^{\prime }(\tilde{z}^{\ast })=z^{\ast }=\underset{n\rightarrow
\infty}{\lim }\frac{x_{k}^{\ast }}{r_{n_{k}}}
\end{equation*}%
for $\tilde{z}^{\ast }=\left\{z_{k}\right\}_{k\in \mathbb{N}
}\in X_{0,\tilde{r}^{\prime }}^{\ast }.$ Let $\Omega _{0,\tilde{r}%
^{\prime }}^{\ast }$ be the metric identification of $\tilde{X}_{0,%
\tilde{r}^{\prime }}^{\ast }$ and let $is:\mathbb{C}
\longrightarrow \Omega^*_{0,\tilde{r}},$
$is':\mathbb{C}\longrightarrow \Omega _{0,\tilde{r'}}^{\ast }$ be
isometries such that the diagrams
\begin{equation}\label{4:eq2.25}
\begin{diagram}
\node{\tilde X^*_{0,\tilde r}}\arrow[2]{e,t}{f}
                              \arrow{se,t}{p}\node[2]{\mathbb
                              C}\arrow{sw,r}{is}
                              \\\node[2]{\Omega^*_{0,\tilde
                              r}}
\end{diagram}
\qquad\text{and}\qquad
\begin{diagram}
\node{\tilde X^*_{0,\tilde r'}}\arrow[2]{e,t}{f'}
                              \arrow{se,t}{p'}\node[2]{\mathbb
                              C}\arrow{sw,r}{is'}
                              \\\node[2]{\Omega^*_{0,\tilde
                              r'}}
\end{diagram}
\end{equation}
are commutative. Similarly \eqref{1:eq1.4} we can define an
isometric embedding $\text{em}':\Omega^*_{0,\tilde
r}\to\Omega^*_{0,\tilde r'}$ such that the diagram
\begin{equation}\label{4:eq2.26*}
\begin{diagram}
\node{\tilde X^*_{0,\tilde r}}\arrow[2]{e,t}{\text{in}_{\tilde
r'}}
                       \arrow[2]{s,l}{p}\node[2]{\tilde X^*_{0,\tilde
                       r'}}
                       \arrow[2]{s,r}{p'}\\ \\
\node{\Omega^*_{0,\tilde
r}}\arrow[2]{e,b}{em'}\node[2]{\Omega^*_{0,\tilde r'}}
\end{diagram}
\end{equation}
is commutative. We claim that the following diagram
\begin{equation}\label{4:eq2.26}
\begin{diagram}
\node{\tilde X^*_{0,\tilde r}}\arrow[2]{e,t}{\text{in}_{\tilde
r'}}
                       \arrow{se,t}{f}
                       \arrow[2]{s,l}{p}\node[2]{\tilde X^*_{0,\tilde
                       r'}} \arrow{sw,l}{f'}
                       \arrow[2]{s,r}{p'}\\
\node[2]{\mathbb C}     \arrow{sw,r}{is}
                     \arrow{se,b}{is'}\\
\node{\Omega^*_{0,\tilde
r}}\arrow[2]{e,b}{em'}\node[2]{\Omega^*_{0,\tilde r'}}
\end{diagram}
\end{equation}
also is commutative. To prove the commutativity of
\eqref{4:eq2.26} it is sufficient to show that
\begin{equation}
em^{\prime }(is(z))=is^{\prime }(z)  \label{4:eq22.27*}
\end{equation}%
for every $z\in \mathbb{C}
.$ Let $z$ be a point of $%
\mathbb{C}
.$ Since $f$ is a surjection, there is $\tilde{x}\in \tilde{X}_{0,%
\tilde{r}}^{\ast }$ such that $ z=f(\tilde{x}). $ Hence, using the
commutativity of diagrams \eqref{4:eq2.25}--\eqref{4:eq2.26*}  and
and the equality $f=f^{\prime }\circ
in_{\tilde r^{\prime }}$ we obtain%
\begin{equation*}
em^{\prime}(is(z))=em^{\prime}(is(f(\tilde{x})))=em^{\prime}(p(%
\tilde{x}))=
\end{equation*}%
\begin{equation*}
p^{\prime}(in_{\tilde
r^{\prime}}(\tilde{x}))=is^{\prime}(f^{\prime }(in_{\tilde
r^{\prime }}(\tilde{x}))=is^{\prime}(f(\tilde{x}))=is^{\prime
}(z).
\end{equation*}%
Consequently \eqref{4:eq22.27*} holds. The diagram%
\begin{equation*}\label{4:tri_diag}
\begin{diagram}
\node{\tilde X^*_{0,\tilde r}}   \arrow[2]{s,l}{p}
                          \arrow{ese,t}{f}\\
\node[3]{\mathbb C}       \arrow{ese,t}{is'}\\
\node{\Omega^*_{a,\tilde r}}
                            \arrow[3]{e,t}{em'}
\node[4]{\Omega^*_{a,\tilde r'}}
\end{diagram}
\end{equation*}
also is commutative, because \eqref{4:eq2.26} is commutative. Since $f$ and $%
is^{\prime }$ are surjections, $\ em^{\prime }$ is surjective.
Hence, by Proposition \ref{1:p1.5},  $\Omega_{0,\tilde{r}}^{\ast
}$ is tangent.
\end{proof}

The following lemma shows that if $X=\mathbb{C}$, then every
maximal self-stable $\tilde{X}_{0,\tilde{r}}$ satisfies the
conditions of Lemma \ref{4:l2.7}. It is a final part of the proof
of Proposition~\ref{4:p2.6}.

\begin{lemma}
\label{4:l2.13} Let $X=\mathbb{C}$ be the set of all complex
numbers with the usual metric $\left\vert .,.\right\vert ,$ let be
$\tilde{r}=\left\{ r_{n}\right\} _{n\in \mathbb{N} }$ a
normalizing sequence and let $\tilde{X}_{0,\tilde{r}}$ be a
maximal self-stable family. Then there are $\tilde{x},\tilde{
y}\in \tilde{X}_{0,\tilde{r}}$ such that \eqref{4:eq22.8} holds, i.e.,%
\begin{equation*}
2\max \left\{ \tilde{d}(\tilde{0},\tilde{x}),\tilde{d}(%
\tilde{0},\tilde{y}),\tilde{d}(\tilde{x},\tilde{y}%
)\right\} <\tilde{d}(\tilde{0},\tilde{x})+\tilde{d}(%
\tilde{0},\tilde{y})+\tilde{d}(\tilde{x},\tilde{y}).
\end{equation*}
\end{lemma}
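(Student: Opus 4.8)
The plan is to argue by cases according to whether a single, explicitly chosen ``probe'' sequence belongs to the given family, exploiting maximality to manufacture a non-degenerate triangle whenever the probe is absent. Recall that by Remark \ref{4:r2.8} inequality \eqref{4:eq22.8} expresses exactly that none of the three points $\tilde 0,\tilde x,\tilde y$ lies between the other two; for vectors in $\mathbb C$ this ``genuine triangle'' condition amounts to the limiting directions of $\tilde x$ and $\tilde y$ being non-collinear, and my whole effort will be to force such non-collinearity out of an arbitrary prescribed family.

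First I would record that $\tilde X_{0,\tilde r}$ always contains an element $\tilde x=\{x_n\}$ with $R_x:=\tilde d(\tilde 0,\tilde x)>0$. Indeed, consider $\tilde e=\{r_n\}$, for which $\tilde d(\tilde 0,\tilde e)=1$. If $\tilde e\in\tilde X_{0,\tilde r}$ we already have a nonzero element; otherwise maximality yields some $\tilde w\in\tilde X_{0,\tilde r}$ not mutually stable with $\tilde e$, and such $\tilde w$ cannot satisfy $\tilde d(\tilde 0,\tilde w)=0$ (for then $w_n=o(r_n)$ and $|r_n-w_n|/r_n\to 1$ would exist), so $\tilde w$ is the desired nonzero element. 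Using Lemma \ref{4:l2.8} I then apply the rotations $f_n(z)=e^{-i\beta_n}z$ with $\beta_n=\arg x_n$: this produces an isometric maximal self-stable family in which the chosen element has $x_n=|x_n|\in\mathbb R^+$, and since \eqref{4:eq22.8} involves only the pseudometric $\tilde d$, it holds for the original family if and only if it holds for the rotated one. Hence I may assume $x_n=|x_n|>0$ for all large $n$.

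Next I introduce the probe $\tilde y=\{ir_n\}$, for which $\tilde d(\tilde 0,\tilde y)=1$. If $\tilde y\in\tilde X_{0,\tilde r}$, then, $x_n$ being real, $\tilde d(\tilde x,\tilde y)=|R_x-i|=\sqrt{R_x^2+1}$, and \eqref{4:eq22.8} is immediate from the strict inequality $\sqrt{R_x^2+1}<R_x+1$, valid because $R_x>0$. If instead $\tilde y\notin\tilde X_{0,\tilde r}$, maximality furnishes $\tilde w=\{|w_n|e^{i\phi_n}\}\in\tilde X_{0,\tilde r}$ that is not mutually stable with $\tilde y$; thus $|w_n/r_n-i|$ has no limit, so $w_n/r_n$ does not converge. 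In particular $R_w:=\tilde d(\tilde 0,\tilde w)>0$, for otherwise $w_n/r_n\to0$. Since $\tilde x$ and $\tilde w$ are mutually stable, expanding $|x_n-w_n|^2/r_n^2=x_n^2/r_n^2+|w_n|^2/r_n^2-2(x_n|w_n|/r_n^2)\cos\phi_n$ and letting $n\to\infty$ forces (because $R_xR_w>0$) a limit $\cos\phi_n\to\kappa$, with $D^2=R_x^2+R_w^2-2R_xR_w\kappa$ where $D:=\tilde d(\tilde x,\tilde w)$.

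Finally I would verify that $\tilde 0,\tilde x,\tilde w$ is the triangle sought. If $\kappa=\pm1$ then $e^{i\phi_n}\to\pm1$, whence $w_n/r_n\to\pm R_w$ would converge, contradicting the non-convergence just established; therefore $\kappa\in(-1,1)$. The three strict triangle inequalities $D<R_x+R_w$ and $|R_x-R_w|<D$ then reduce, via the formula for $D^2$, to $\kappa>-1$ and $\kappa<1$ respectively, so \eqref{4:eq22.8} holds for the pair $\tilde x,\tilde w$. The main obstacle is precisely this last maneuver: one has no control over the prescribed maximal family, so non-collinearity must be extracted indirectly, and the key observation is that the very absence of the perpendicular probe $\{ir_n\}$ forces a witness $\tilde w$ whose direction oscillates, which is exactly what keeps $\kappa$ bounded away from $\pm1$.
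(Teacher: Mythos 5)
Your proof is correct, and it takes a genuinely different route from the paper's. The paper argues by contradiction: assuming the degenerate equality \eqref{4:eq22.27} for \emph{all} pairs, it first extracts an element with $\tilde d(\tilde 0,\tilde x)=c_0>0$ (otherwise $\tilde r$ itself could be adjoined to the family), normalizes via Lemma \ref{4:l2.8} and a scaling so that $\tilde x=\tilde r$, then classifies the three possible degenerate configurations \eqref{4:eq22.31} and shows by a circle-intersection argument (Fig.~1) that every $\tilde y$ in the family satisfies $\lim_{n}y_n/r_n=t(\tilde y)\in\mathbb R$, i.e.\ a degenerate maximal family would be asymptotically collinear along the real axis, cf.\ \eqref{4:eq22.33}; only then is $i\tilde r$ invoked, being mutually stable with everything yet outside the family, contradicting maximality. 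You instead argue directly with the single probe $\{ir_n\}$ after the same rotational normalization: either the probe lies in the family, and then $(\tilde x,\{ir_n\})$ is an explicit non-degenerate triple because $\sqrt{R_x^2+1}<R_x+1$ when $R_x>0$ (and $\sqrt{R_x^2+1}$ is the maximum of the three distances, so this one inequality is exactly \eqref{4:eq22.8}); or maximality supplies a witness $\tilde w$ not mutually stable with the probe, and the resulting non-convergence of $w_n/r_n$, combined with the convergence of $|w_n|/r_n$ and of $\cos\phi_n$ (the latter forced by mutual stability with $\tilde x$ since $R_xR_w>0$), pins $\kappa=\lim_n\cos\phi_n$ strictly inside $(-1,1)$, which via $D^2=R_x^2+R_w^2-2R_xR_w\kappa$ yields all three strict triangle inequalities for $(\tilde 0,\tilde x,\tilde w)$. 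I checked each step; in particular your preliminary claim that the family contains an element with $\tilde d(\tilde 0,\tilde x)>0$ is sound, since any witness of instability against $\{r_n\}$ cannot be infinitesimal relative to $\tilde r$, and your passage through Lemma \ref{4:l2.8} correctly transfers \eqref{4:eq22.8} back to the original family because the induced bijection preserves $\tilde d$ and fixes $\tilde 0$. Your version is shorter and avoids both the three-case analysis and the planar geometry of Fig.~1; what the paper's contradiction argument buys in exchange is the structural by-product that a counterexample family would have to be asymptotically real, though nothing later in the paper relies on that description. Two cosmetic points for a final write-up: define $\beta_n=\arg x_n$ and $\phi_n=\arg w_n$ arbitrarily at the (finitely many, since $R_x,R_w>0$) indices where $x_n=0$ or $w_n=0$, and note explicitly that $\kappa=\pm1$ gives $\sin\phi_n\to0$ from $\sin^2\phi_n=1-\cos^2\phi_n$, hence $e^{i\phi_n}\to\pm1$, which is the step that converts oscillation of direction into $\kappa\in(-1,1)$.
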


\begin{proof}
Suppose that the equality
\begin{equation}
2\max \left\{ \tilde{d}(\tilde{0},\tilde{x}),\tilde{d}(%
\tilde{0},\tilde{y}),\tilde{d}(\tilde{x},\tilde{y}%
)\right\} =\tilde{d}(\tilde{0},\tilde{x})+\tilde{d}(%
\tilde{0},\tilde{y})+\tilde{d}(\tilde{x},\tilde{y})
\label{4:eq22.27}
\end{equation}%
holds for all $\tilde{x},\tilde{y}\in \tilde{X}_{0,\tilde{r}}$.

Consider first the simplest case where $\tilde{d}(\tilde{0},%
\tilde{x})=0$ for all $\tilde{x}\in \tilde{X}_{0,\tilde{r}}.$ The
last equality implies
\begin{equation*}
\tilde{d}(\tilde{r},\tilde{x})=\underset{n\rightarrow \infty }{%
\lim }\frac{\tilde{d}(x_n,r_{n})}{r_{n}}=\underset{%
n\rightarrow \infty }{\lim }\left\vert
\frac{x_{n}}{r_{n}}-1\right\vert =1
\end{equation*}%
for all $\tilde{x}=\left\{ x_{n}\right\} _{n\in\mathbb{N}}\in
\tilde{X}_{0,\tilde{r}}$. Consequently $\tilde{X}_{0,
\tilde{r}}\cup \left\{\tilde{r}\right\} $ is a self-stable family,
so $\tilde{X}_{0,\tilde{r}}$ is not maximal self-stable, contrary
to the conditions.

Hence there is $\tilde{x}=\left\{x_{n}\right\}_{n\in\mathbb{N}}\in
\tilde{X}_{0,\tilde{r}}$ such that
\begin{equation}
\tilde{d}(\tilde{0},\tilde{x})=c_{0}>0.  \label{4:eq22.28}
\end{equation}
Without loos of generality we may suppose that
\begin{equation}
\tilde{x}=\tilde{r}.  \label{4:eq22.29}
\end{equation}%
Indeed, passing, if necessary, to an isometric
$\tilde{X}_{0,\tilde{r}}^{\ast}$ , see Lemma \ref{4:l2.8}, we may
put $x_{n}=\left\vert x_{n}\right\vert $ for all $n\in\mathbb{N}$.
Next, since the family
\begin{equation*}
c\tilde{X}_{0,\tilde{r}}=\left\{ \left\{ cz_{n}\right\} _{n\in
\mathbb{N}}:\left\{ z_{n}\right\} _{n\in \mathbb{N} }\in
\tilde{X}_{0,\tilde{r}}\right\}
\end{equation*}%
is maximal self-stable if $0\neq c\in\mathbb{C},$ we can take
$\frac{1}{c_{0}}\tilde{X}_{0,\tilde{r}}$ instead of $
\tilde{X}_{0,\tilde{r}}$. Moreover, since
\begin{equation*}
\frac{1}{c_{0}}\tilde{x}:=\left\{
\frac{1}{c_{0}}x_{n}\right\}_{n\in \mathbb{N}}\in
\frac{1}{c_{0}}\tilde{X}_{0,\tilde{r}}\,\,\,\text{and}\,\,\,
\underset{ n\rightarrow \infty }{\lim
}\frac{1}{c_{0}}\frac{x_{n}}{r_{n}}=1,
\end{equation*}%
we see that $\tilde{d}(\frac{1}{c_{0}}\tilde{x},\tilde{r})=0$ and,
consequently,
\begin{equation*}
\tilde{r}\in \frac{1}{c_{0}}\tilde{X}_{0,\tilde{r}}.
\end{equation*}%
The equalities \eqref{4:eq22.27} and \eqref{4:eq22.29} imply that
\begin{equation}
2\max \left\{1,\tilde{d}(\tilde{0},\tilde{y}),\tilde{d}(%
\tilde{r},\tilde{y})\right\} =1+\tilde{d}(\tilde{0},%
\tilde{y})+\tilde{d}(\tilde{r},\tilde{y}) \label{4:eq22.30}
\end{equation}%
for all $\tilde{y}\in \tilde{X}_{0,\tilde{r}}.$ There exist only
the  following three possibilities under which \eqref{4:eq22.30}
holds:
\begin{equation}
\tilde{d}(\tilde{r},\tilde{y})=\tilde{d}(\tilde{y},%
\tilde{0})+1,~~1=\tilde{d}(\tilde{0},\tilde{y})+\tilde{d}%
(\tilde{y},\tilde{r})\text{ and }\tilde{d}(\tilde{0},%
\tilde{y})=1+\tilde{d}(\tilde{r},\tilde{y}). \label{4:eq22.31}
\end{equation}%
Write for $\tilde y\in\tilde X_{0,\tilde r}$
\begin{equation}
t=t(\tilde y):=\left\{
\begin{array}{cc}
-\tilde{d}(\tilde{y},\tilde{0}) &\text{if \ } \tilde{d}(\tilde{r},%
\tilde{y})=\tilde{d}(\tilde{y},\tilde{0})+1, \\
\tilde{d}(\tilde{0},\tilde{y}) & \text{otherwise}.%
\end{array}%
\right.  \label{4:eq22.32}
\end{equation}%
We claim that the limit relation
\begin{equation}
\underset{n\rightarrow \infty }{\lim}\frac{\left\vert
tr_{n}-y_{n}\right\vert }{r_{n}}=0  \label{4:eq22.33}
\end{equation}%
holds for every $\tilde{y}=\left\{
y_{n}\right\}_{n\in\mathbb{N}}\in \tilde{X}_{0,\tilde{r}}$.
Indeed, fix $\tilde y\in\tilde X_{0,\tilde r}$, and suppose that
the first equality from \eqref{4:eq22.31} holds. (Note that
\eqref{4:eq22.33} is
trivial if $\tilde{d}(\tilde 0,\tilde{y})=0$ or $\tilde{d}(\tilde{r}%
,\tilde{y})=0$.) Let us denote by $y^{\ast }$ an arbitrary limit
point of the sequence $\left\{ \frac{y_{n}}{r_{n}}\right\} _{n\in
\mathbb{N}}.$ To prove \eqref{4:eq22.33} it is sufficient to show
that
\begin{equation}
y^{\ast }=t.  \label{4:eq22.34}
\end{equation}%

\begin{figure}[h]\centering
\includegraphics[width=9cm,keepaspectratio]{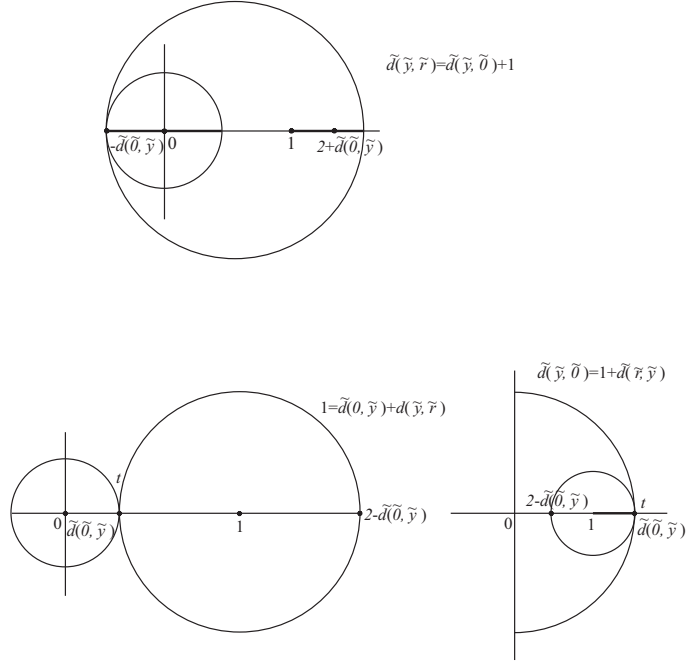}
\caption{Points $\tilde x,\tilde y$ and $\tilde 0$ are situated on
the ``real axis''.}
\end{figure}
The fist equality in \eqref{4:eq22.31} and definition
\eqref{4:eq22.32} imply that
\begin{equation*}
\left\vert y^{\ast }\right\vert =\tilde{d}(\tilde{y},\tilde{0}%
)=\left\vert t\right\vert \text{ and }\left\vert y^{\ast }-1\right\vert =%
\underset{n\rightarrow \infty }{\lim }\frac{\left\vert
y_{n}-r_{n}\right\vert }{r_{n}}=\tilde{d}(\tilde{r},\tilde{y})=1+%
\tilde{d}(\tilde{y},\tilde 0).
\end{equation*}%
Hence $y^{\ast}$ belongs to the intersection of the circumferences
\begin{equation*}
\left\{ z\in \mathbb{C} :\left\vert z\right\vert
=\tilde{d}(\tilde{0},\tilde{y})\right\} \text{ and }\left\{ z\in
\mathbb{C}:\left\vert z-1\right\vert =1+\tilde{d}(\tilde{y},\tilde{0}%
)\right\} .
\end{equation*}
Since this intersection has the unique element $t,$ see Fig.~1, we
obtain \eqref{4:eq22.34}. Similarly  we have
\begin{equation*}
y^{\ast}\in \left\{z\in\mathbb{C}:\left\vert z\right\vert
=\tilde{d}(\widetilde{0},\tilde{y})\right\} \cap
\left\{z\in\mathbb{C}:\left\vert z-1\right\vert
=1-\tilde{d}(\tilde{0},\tilde{y})\right\} =\left\{ t\right\}
\end{equation*}
if $1=\tilde{d}(\tilde{0},\tilde{y})+\tilde{
d}(\tilde{y},\tilde{r})$ and
\begin{equation*}
y^{\ast }\in \left\{ z\in\mathbb{C}:\left\vert z\right\vert
=\tilde{d}(\tilde{0},\tilde{y})\right\} \cap \left\{
z\in\mathbb{C}:\left\vert z-1\right\vert
=\tilde{d}(\tilde{0},\tilde{y})-1\right\}=\{t\}
\end{equation*}%
for the case   $\tilde{d}(\tilde{0},\tilde{%
y})=1+\tilde{d}(\tilde{y},\tilde{z})$, i.e., $y^{\ast }=t$ for all
possible cases.

To complete the proof let us consider the family
\begin{equation*}
\tilde{X}_{0,\tilde{r}}^{\prime }:=\left\{ \left\{ cr_{n}\right\}
_{n\in\mathbb{N}}:c\in\mathbb{C}\right\}.
\end{equation*}%
Since \eqref{4:eq22.33} holds for all $\tilde{y}\in
\tilde{X}_{0,\tilde{r}}$, it is easy to prove that
$[\tilde{X}_{0,\tilde r}^{\prime }]_X\supseteq \tilde{X}_{0,\tilde
r}$ where the operation $[\;\cdot\;]_X$ was defined in
\eqref{6:eq5.1}. Note that $[\tilde X_{0,\tilde r}']_X$ is
self-stable and that
\begin{equation*}
\tilde{d}(i\tilde{r},t\tilde{r})= \underset{n\rightarrow \infty }{%
\lim }\frac{\left\vert ir_{n}-tr_{n}\right\vert
}{r_{n}}=\sqrt{1+t^2} \neq 0
\end{equation*}
for every $t\in\mathbb{R}$. Hence $i\tilde{r}\notin
\tilde{X}_{0,\tilde{r}},$ contrary to the supposition about the
maximality of $\tilde{X}_{0,\tilde{r}}.$
\end{proof}

\section{Tangent  spaces to starlike sets}

Examples \ref{4:e2.1} and \ref{4:e2.5} are some particular cases
of starlike sets on the plane. The next our goal is to prove
Theorem \ref{t:1.7} which describes tangent spaces to  arbitrary
starlike subsets of $\mathbb C$.

For convenience we repeat this theorem here.
\begin{theorem}\label{t:4.1}
Let $Y\subseteq\mathbb C$ be a starlike set with a center $a$ and
let $\tilde r$ be a normalizing sequence. Then all pretangent
spaces to $Y$ at the point $a$ lie in tangent spaces and for each
tangent space $\Omega_{a,\tilde r}^Y$ there is an isometry
\begin{equation}\label{eq4.1}
\psi:\Omega_{a,\tilde r}^Y\to Con_a(Y)\quad\text{with
}\psi(\alpha)= a.
\end{equation}
\end{theorem}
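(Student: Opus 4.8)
The plan is to first reduce to the case $a=0$ by translating, a translation being an isometry of $\mathbb C$, so that $C:=Con_0(Y)$ is a closed cone with vertex $0$ and $0\in Y$. The strategy then splits into three parts: (1) show that $Y$ and $C$ are strongly tangent equivalent at $0$; (2) prove the theorem for the cone $C$ itself; (3) transfer the conclusion from $C$ to $Y$ through the equivalence.

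For part (1) I would use Proposition \ref{6:t5.4}. Since $Y\subseteq C$ we have $\varepsilon_0(t,Y,C)=0$, so it remains to estimate $\varepsilon_0(t,C,Y)$. Writing $K:=C\cap\{|z|=1\}$ and $z=t\omega$, one gets $\varepsilon_0(t,C,Y)/t=\sup_{\omega\in K}\dist(\omega,t^{-1}Y)$. Starlikeness gives $t^{-1}Y\subseteq (t')^{-1}Y$ whenever $t'<t$ (because $(t'/t)y\in[0,y]\subseteq Y$), so the functions $g_t(\omega):=\dist(\omega,t^{-1}Y)$ are $1$-Lipschitz on the compact set $K$ and decrease pointwise, as $t\downarrow 0$, to $\dist(\omega,C)=0$. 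By Dini's theorem this convergence is uniform, hence $\lim_{t\to 0}\varepsilon_0(t)/t=0$ and Proposition \ref{6:t5.4} applies.

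For part (2) I would show that for a closed cone $C$ with vertex $0$ every pretangent space $\Omega^C_{0,\tilde r}$ is tangent and isometric to $C$ via $p(\{x_n\})\mapsto \lim_n x_n/r_n$. The whole point is the existence of this limit for each element of a maximal self-stable family; once it exists it lies in $C$ (which is closed and satisfies $r^{-1}C=C$), the map is automatically distance preserving, and surjectivity follows from the radial sequences $\{r_n\xi\}$, $\xi\in C$. If $C$ lies in a line I would rotate by Lemma \ref{4:l2.8} to reduce to $C=\mathbb R$ or $C=\mathbb R^+$ and invoke Example \ref{4:e2.1} and Proposition \ref{4:p2.4}. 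If $C$ is two-dimensional there are non-collinear $\xi,\eta\in C$, so a pair of radial sequences satisfies the strict inequality \eqref{4:eq22.8}, and the computation of Lemma \ref{4:l2.7} together with Remark \ref{4:r2.9} forces $\{x_n/r_n\}$ to converge; tangency is obtained exactly as in Lemma \ref{4:l2.10}, through surjectivity of $em'$ and Proposition \ref{1:p1.5}.

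Finally, for part (3): given a tangent space $\Omega^Y_{0,\tilde r}$ with maximal self-stable family $\tilde Y_{0,\tilde r}$, set $\tilde C_{0,\tilde r}:=[\tilde Y_{0,\tilde r}]_{C}$. By Proposition \ref{6:p5.2} this is maximal self-stable in $\tilde C$, the map $\alpha\mapsto[\alpha]_C$ is an isometry $\Omega^Y_{0,\tilde r}\to\Omega^C_{0,\tilde r}$, and tangency of $\Omega^Y_{0,\tilde r}$ yields tangency of $\Omega^C_{0,\tilde r}$; composing with the cone isometry of part (2) produces the desired $\psi:\Omega^Y_{0,\tilde r}\to C$ carrying the class of the constant sequence to $0$. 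That all pretangent spaces to $Y$ lie in tangent spaces follows from part (2) for $C$ and Proposition \ref{p:2.4}. The main obstacle is the two-dimensional cone in part (2): one must verify that the convergence and surjectivity arguments imported from the $\mathbb C$-analysis stay inside the proper closed subset $C$, i.e. that the limiting directions produced genuinely belong to $C$ and that the radial sequences exhaust the pretangent space.
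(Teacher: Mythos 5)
The fatal problem is in your part (2). You claim that for a closed cone $C$ with vertex $0$ \emph{every} pretangent space $\Omega^C_{0,\tilde r}$ is tangent, and you derive this by observing that noncollinear $\xi,\eta\in C$ give radial sequences $\{r_n\xi\}$, $\{r_n\eta\}$ satisfying the strict inequality \eqref{4:eq22.8}, after which Lemma \ref{4:l2.7} applies. But Lemma \ref{4:l2.7} requires the nondegenerate pair to lie \emph{in the given maximal self-stable family} $\tilde X_{0,\tilde r}$, and the radial sequences need not belong to it: membership demands mutual stability with every element of the family, not just with $\tilde 0$ and each other. For $X=\mathbb C$ the paper closes exactly this gap in Lemma \ref{4:l2.13}, where maximality is exploited using sequences such as $i\tilde r$ that live in $\tilde{\mathbb C}$; for a proper cone the analogous off-cone direction is unavailable, and in fact your stronger claim is false. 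Example \ref{8:e2.20} exhibits, for a closed convex two-dimensional cone, a maximal self-stable family containing the alternating sequence $z_n=r_ne^{i\theta_1}$ ($n$ odd), $z_n=r_ne^{i\theta_3}$ ($n$ even) --- which is not mutually stable with the radial sequence $\{r_ne^{i\theta_1}\}_{n\in\mathbb N}$ --- whose pretangent space is \emph{not} tangent; see also Proposition \ref{p:1.12}. This is precisely why the theorem asserts only that pretangent spaces \emph{lie in} tangent spaces. The paper's substitute for your false step is Lemma \ref{8:l2.17}: only after passing to a subsequence $\tilde r'$ and enlarging the family can one secure a pair satisfying \eqref{8:eq8.3}, and Proposition \ref{8:p2.18} then yields the correct model-case statement (every pretangent space to the cone lies in a tangent one; the tangent ones are isometric to $C$ with the base point going to the vertex). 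Without this subsequence step your argument, pushed through part (3), would prove that all pretangent spaces to starlike sets are tangent, contradicting the paper's own example; your closing caveat about limits staying in $C$ and radial sequences exhausting the space identifies a real but minor issue and misses this fatal one.

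Your parts (1) and (3) are sound. Part (3) is essentially the paper's assembly: Proposition \ref{6:p5.2} gives the isometry $\alpha\mapsto[\alpha]_C$ and the transfer of tangency between $\Omega^Y_{0,\tilde r}$ and $\Omega^C_{0,\tilde r}$, and Proposition \ref{p:2.4} transfers ``lies in a tangent space'' from $C$ to $Y$. Part (1) is a genuinely different, and arguably cleaner, proof of Lemma \ref{l:4.6}: the monotonicity $t^{-1}Y\subseteq(t')^{-1}Y$ for $t'<t$, which comes from starlikeness, makes the $1$-Lipschitz functions $g_t(\omega)=\dist(\omega,t^{-1}Y)$ decrease pointwise to $0$ on the compact set $C\cap\{|z|=1\}$ (using $Cl(\hat Y)=Con_0(Y)$), so Dini's theorem gives the uniform convergence needed for Proposition \ref{6:t5.4}; this replaces the paper's finite $\frac{\delta_0}2$-net and contradiction argument. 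So the outer architecture matches the paper, but the cone model case must be repaired along the lines of Lemmas \ref{8:l2.16}--\ref{8:l2.17} and Proposition \ref{8:p2.18} before the proof stands.
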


Before proving the theorem we consider its particular cases. If
$Y$ is an one-point set, then all rays \eqref{8:eq22.34} are also
one-point and so $Con_{a}(Y)=\left\{ a\right\} $. Moreover, it is
easy to see that, in this case, all pretangent spaces are tangent
and one-point. Hence the theorem is valid if $Y=\left\{
a\right\}$. For the case when each three point of $Y$ are
collinear, desirable isometry \eqref{eq4.1} was, in fact,
constructed in the proof of Proposition \ref{4:p2.4}, see diagram
\eqref{4:eq2.7}. If $Int(Y)\neq \emptyset $ and $a\in Int(Y),$
then the theorem follows from Proposition \ref{4:p2.6}.
Consequently it is enough examine the case where $a\in
\partial Y$ and $Y$ contains at least three noncollinear points.

\begin{lemma}\label{8:l2.16}
Let $X\subseteq \mathbb C$ be a closed cone with a vertex $0$. Let
$a=0$ be a marked point of $X,$ $\tilde{r}=\left\{ r_{n}\right\}
_{n\in \mathbb{N} }$ a normalizing sequence and
$\tilde{X}_{0,\tilde{r}}$ a maximal self-stable family. Suppose
$X$ contains at least three noncollinear points and  there exist
$\tilde{x}=\left\{ x_{n}\right\} _{n\in \mathbb{N} }$ and \
$\tilde{y}=\left\{ y_{n}\right\} _{n\in \mathbb{N} }$ from
$\tilde{X}_{0,\tilde{r}}$ such that
\begin{equation}\label{8:eq8.3}
2\max\{\tilde d(\tilde0,\tilde x),\tilde d (\tilde0, \tilde y),
\tilde d(\tilde x, \tilde y)\}<\tilde d(\tilde0,\tilde x)+\tilde
d(\tilde0,\tilde y)+\tilde d(\tilde x, \tilde y)
\end{equation}
 holds
and sequences
\begin{equation}
\frac{\tilde{x}}{\tilde{r}}:=\left\{\frac{x_{n}}{r_{n}}\right\}
_{n\in \mathbb{N}
},~~\frac{\tilde{y}}{\tilde{r}}:=\left\{\frac{y_{n}}{r_{n}}\right\}
_{n\in \mathbb{N} }  \label{8:eq22.36}
\end{equation}%
are convergent. Then the following statements hold.

(i) If $\tilde{z}=\left\{ z_{n}\right\} _{n\in \mathbb{N} }\in
\tilde{X}_{0,\tilde{r}},$ then there exists a limit
\begin{equation}
z^{\ast }=\underset{n\rightarrow \infty }{\lim
}\frac{z_{n}}{r_{n}} \label{8:eq22.37}
\end{equation}%
and%
\begin{equation}
z^{\ast }\in X.  \label{8:eq22.38}
\end{equation}

(ii) Conversely, if $\tilde{z}\in \tilde{X}$ and if relations
\eqref{8:eq22.37}, \eqref{8:eq22.38} hold, then $\tilde{z}\in \tilde{X}_{0,%
\tilde{r}}.$

(iii) The mapping $\tilde{X}_{0,\tilde{r}}\overset{f}{%
\longrightarrow }X,~~f(\tilde{z})=z^{\ast },$ is
distance-preserving and onto.
\end{lemma}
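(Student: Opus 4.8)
The plan is to identify $\tilde X_{0,\tilde r}$ with $X$ through the rescaling map $f(\tilde z)=z^{*}=\lim_n z_n/r_n$, exactly as in the proof of Lemma \ref{4:l2.10}; the only genuinely new ingredients are the use of the cone structure and the closedness of $X$, which are needed both to land the limits inside $X$ and to realize every point of $X$.

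For statement (i) I would first record that $x^{*}:=\lim_n x_n/r_n$ and $y^{*}:=\lim_n y_n/r_n$ exist by hypothesis \eqref{8:eq22.36} and satisfy $|x^{*}|=\tilde d(\tilde 0,\tilde x)$, $|y^{*}|=\tilde d(\tilde 0,\tilde y)$, $|x^{*}-y^{*}|=\tilde d(\tilde x,\tilde y)$. Hypothesis \eqref{8:eq8.3} then asserts that all three triangle inequalities among $0,x^{*},y^{*}$ are strict, so (cf. Remark \ref{4:r2.8}) these three points are distinct and noncollinear. Now fix $\tilde z\in\tilde X_{0,\tilde r}$. Since $\tilde X_{0,\tilde r}$ is self-stable, the limits $\tilde d(\tilde 0,\tilde z)$, $\tilde d(\tilde x,\tilde z)$, $\tilde d(\tilde y,\tilde z)$ all exist, and $|z_n/r_n|\to\tilde d(\tilde 0,\tilde z)$ shows that $\{z_n/r_n\}_{n\in\mathbb N}$ is bounded, hence has limit points. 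Any limit point $w$ satisfies simultaneously $|w|=\tilde d(\tilde 0,\tilde z)$, $|w-x^{*}|=\tilde d(\tilde x,\tilde z)$ and $|w-y^{*}|=\tilde d(\tilde y,\tilde z)$, i.e.\ $w$ lies on three circles centred at the noncollinear points $0,x^{*},y^{*}$. The main, and essentially only, difficulty is the uniqueness of such $w$: two circles with distinct centres meet in at most two points, symmetric with respect to the line through the centres, and a third centre off that line cannot be equidistant from both, so the triple intersection is a single point. Thus $\{z_n/r_n\}$ has a unique limit point and therefore converges to some $z^{*}$, which is \eqref{8:eq22.37}. Finally, since each $z_n\in X$ and $X$ is a cone with vertex $0$, we have $z_n/r_n=(1/r_n)z_n\in X$, and closedness of $X$ yields $z^{*}\in X$, which is \eqref{8:eq22.38}.

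For statement (ii), suppose $\tilde z\in\tilde X$ satisfies \eqref{8:eq22.37}--\eqref{8:eq22.38}. For an arbitrary $\tilde w\in\tilde X_{0,\tilde r}$, part (i) supplies $w^{*}=\lim_n w_n/r_n$, and then
\[
\tilde d(\tilde w,\tilde z)=\lim_{n\to\infty}\frac{|w_n-z_n|}{r_n}=\lim_{n\to\infty}\Bigl|\frac{w_n}{r_n}-\frac{z_n}{r_n}\Bigr|=|w^{*}-z^{*}|
\]
is finite, so $\tilde z$ is mutually stable with every element of $\tilde X_{0,\tilde r}$. By maximality of $\tilde X_{0,\tilde r}$ this forces $\tilde z\in\tilde X_{0,\tilde r}$.

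Statement (iii) is then immediate. The displayed computation shows $\tilde d(\tilde w,\tilde z)=|f(\tilde w)-f(\tilde z)|$, so $f$ is distance-preserving. For surjectivity, given $p\in X$ the constant-direction sequence $\{r_n p\}_{n\in\mathbb N}$ lies in $X$ by the cone property, has $z^{*}=p\in X$, hence belongs to $\tilde X_{0,\tilde r}$ by (ii) and is mapped by $f$ onto $p$; thus $f$ is onto.
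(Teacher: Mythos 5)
Your proof is correct, and for the crux of statement (i) it takes a genuinely different route from the paper. The paper writes $z_n=|z_n|e^{i\gamma_n}$ and argues analytically: mutual stability of $\tilde z$ with $\tilde x$ and $\tilde y$ yields the existence of $\lim_{n}\cos(\gamma_n-\beta)$ and $\lim_n\cos(\gamma_n-\theta)$, where $\beta,\theta$ are the limiting arguments of $\tilde x/\tilde r$ and $\tilde y/\tilde r$; condition \eqref{8:eq8.3} guarantees $\sin(\beta-\theta)\neq 0$, and Cramer's rule applied to the resulting linear system in $(\cos\gamma_n,\sin\gamma_n)$ then gives convergence of $z_n/r_n$, with $z^*\in X$ by the cone property and closedness exactly as in your argument. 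You replace this trigonometric computation by a soft compactness-plus-rigidity argument: $\{z_n/r_n\}$ is bounded, every limit point lies on three prescribed circles centred at the points $0,x^*,y^*$, and noncollinearity of these centres (equivalent to the strict inequality \eqref{8:eq8.3}, in the spirit of Remark \ref{4:r2.8}) forces the triple intersection to contain at most one point --- if two distinct points satisfied all three distance equations, all three centres would lie on their common perpendicular bisector. Your version is coordinate-free, uniformly absorbs the degenerate case $\tilde d(\tilde 0,\tilde z)=0$ (where the paper's extraction of $\lim_n\cos(\gamma_n-\beta)$ from the stability relation tacitly needs $R_xR_z\neq 0$), and would transfer verbatim to $\mathbb R^m$ with $m+1$ affinely independent reference points; the paper's computation, in exchange, produces the limit explicitly and continues the machinery already set up in Lemma \ref{4:l2.7}. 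Your parts (ii) and (iii) --- mutual stability via $\tilde d(\tilde w,\tilde z)=|w^*-z^*|$ and maximality, then surjectivity via the sequences $\{r_np\}_{n\in\mathbb N}$, which lie in $X$ by the cone property --- coincide with the paper's own proof.
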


\begin{proof}
Suppose that
\begin{equation*}
\tilde{z}=\left\{\left\vert z_{n}\right\vert e^{i\gamma
_{n}}\right\}_{n\in \mathbb{N} }\in \tilde{X}_{0,\tilde{r}}.
\end{equation*}%
Since $\tilde{x}$ and $\tilde{z}$ are mutually stable, there is a
limit
\begin{equation}
\underset{n\rightarrow \infty }{\lim }\left\vert \frac{z_{n}}{r_{n}}-\frac{%
x_{n}}{r_{n}}\right\vert ^{2}
=R_{x}^{2}+R_{z}^{2}-2R_{x}R_{z}\underset{n\rightarrow \infty
}{\lim }\cos (\gamma _{n}-\beta )  \label{8:eq22.39}
\end{equation}%
where%
\begin{equation*}
R_{z}=\underset{n\rightarrow \infty }{\lim }\frac{\left\vert
z_{n}\right\vert }{r_{n}},~R_{x}=\underset{n\rightarrow \infty }{\lim }\frac{%
\left\vert x_{n}\right\vert }{r_{n}},~R_{x}e^{i\beta }=\underset{%
n\rightarrow \infty }{\lim }\frac{x_{n}}{r_{n}}.
\end{equation*}%
Similarly we have
\begin{equation}
\underset{n\rightarrow \infty }{\lim }\left\vert \frac{z_{n}}{r_{n}}-\frac{%
y_{n}}{r_{n}}\right\vert ^{2}=R_{y}^{2}+R_{z}^{2}-2R_{y}R_{z}\underset{%
n\rightarrow \infty }{\lim }\cos (\gamma _{n}-\theta )
\label{8:eq22.40}
\end{equation}%
where%
\begin{equation*}
R_{y}=\underset{n\rightarrow \infty }{\lim }\frac{\left\vert
y_{n}\right\vert }{r_{n}}\text{ and }~R_{y}e^{i\theta }=\underset{%
n\rightarrow \infty }{\lim }\frac{y_{n}}{r_{n}}.
\end{equation*}%
Consider the system
\begin{eqnarray}
\cos \theta \cos \gamma _{n}+\sin \theta \sin \gamma _{n} &=&\cos
(\gamma
_{n}-\theta )  \label{8:eq22.41} \\
\cos \beta \cos \gamma _{n}+\sin \beta \sin \gamma _{n} &=&\cos
(\gamma _{n}-\beta ).  \notag
\end{eqnarray}%
The inequality \eqref{8:eq8.3} implies that%
\begin{equation*}
\left\vert
\begin{array}{cc}
\cos \theta & \sin \theta \\
\cos \beta & \sin \beta%
\end{array}%
\right\vert =\sin (\beta -\theta )\neq 0.
\end{equation*}%
Hence, by Cramer's rule, we obtain from \eqref{8:eq22.41}%
\begin{equation*}
\cos \gamma _{n}=\frac{\cos (\gamma _{n}-\theta )\sin \beta -\cos
(\gamma _{n}-\beta )\sin \theta }{\sin (\beta -\theta )},
\end{equation*}%
\begin{equation*}
\sin \gamma _{n}=\frac{\cos (\gamma _{n}-\beta )\cos \theta -\cos
(\gamma _{n}-\theta )\cos \beta }{\sin (\beta -\theta )}.
\end{equation*}%
Consequently, the existence of limits \eqref{8:eq22.39} and
\eqref{8:eq22.40} implies the existence of limit
\eqref{8:eq22.37}. Note also that the elements of the sequence
$\frac{\tilde{z}}{\tilde{r}}$ are some points of $X$. Hence we
have \eqref{8:eq22.38} because $X$ contains all its limit points
as a closed set.

Now suppose that $\tilde{z}\in \tilde{X}$ and relations
\eqref{8:eq22.37}, \eqref{8:eq22.38} hold. Since
$\tilde{X}_{0,\tilde{r}}$
is maximal self-stable, to prove $\tilde{z}\in \tilde{X}_{0,%
\tilde{r}}$ it is sufficient to show that that $\tilde{z}$ and $%
\tilde{w}=\left\{ w_{n}\right\}_{n\in\mathbb{N}}$ are mutually
stable for each $\tilde{w}\in \tilde{X}_{0,
\tilde{r}}.$ Let $\tilde{w}$ be an arbitrary element of $\tilde{X%
}_{0,\tilde{r}}$.  Statement (i) implies that there is
$w^{\ast}\in X$ such that
\begin{equation*}
\underset{n\rightarrow \infty }{\lim }\frac{\left\vert
w_{n}-r_{n}w^{\ast }\right\vert }{r_{n}}=0.
\end{equation*}
Hence, by \eqref{8:eq22.37},
\begin{equation}
\tilde{d}(\tilde{w},\tilde{z})=\underset{n\rightarrow \infty }{%
\lim }\frac{\left\vert z_{n}-w_{n}\right\vert }{r_{n}}=\left\vert
z^{\ast }-w^{\ast }\right\vert , \label{8:eq22.42}
\end{equation}%
i.e., $\tilde{z}$ and $\tilde{w}$ are mutually stable.

To prove Statement (iii) note that \eqref{8:eq22.42} means that
the function
\begin{equation*}
\tilde{X}_{0,\tilde{r}}\ni \tilde{z}\overset{f}{\longmapsto}%
z^{\ast }\in X
\end{equation*}%
is distance-preserving. Moreover, Statement (ii) implies that  for
every $z^{\ast }\in X$ we have $\left\{ z^{\ast }r_{n}\right\}
_{n\in\mathbb{N} }\in \tilde{X}_{0,\tilde{r}},$  i.e., $f$ is
onto.
\end{proof}

A modification of the proof of Lemma \ref{4:l2.13} gives the
following.

\begin{lemma}
\label{8:l2.17} Let $X$ be a set from Lemma \ref{8:l2.16}. Then
for every
maximal self-stable $\tilde{X}_{0,\tilde{r}}$ there are $\tilde{r%
}^{\prime }$ and $\tilde{X}_{0,\tilde{r}^{\prime}}$ such that
\begin{equation*}
\tilde{X}_{0,\tilde{r}^{\prime }}\supseteq \left\{\tilde{x}%
^{\prime }:\tilde{x}\in \tilde{X}_{0,\tilde{r}}\right\}
\end{equation*}%
and \eqref{8:eq8.3} holds \ for some $\tilde{x},\tilde{y}\in
\tilde{X}_{0,\tilde{r}^{\prime }}.$
\end{lemma}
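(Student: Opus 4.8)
The plan is to follow the proof of Lemma \ref{4:l2.13} almost verbatim, replacing its final \emph{contradiction} step by a \emph{construction}. First I would dispose of the trivial case: if \eqref{8:eq8.3} already holds for some $\tilde x,\tilde y\in\tilde X_{0,\tilde r}$, I take $\tilde r'=\tilde r$ and $\tilde X_{0,\tilde r'}=\tilde X_{0,\tilde r}$, and there is nothing to prove. So I assume that the opposite (equality) relation \eqref{4:eq22.27} holds for every pair $\tilde x,\tilde y\in\tilde X_{0,\tilde r}$; by Remark \ref{4:r2.8} this means that in the pseudometric space $(\tilde X_{0,\tilde r},\tilde d)$ one of the points $\tilde 0,\tilde x,\tilde y$ always lies between the other two.

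The one genuinely new point compared with Lemma \ref{4:l2.13} is this: there we were free to rotate, via Lemma \ref{4:l2.8}, a chosen element onto the positive real axis and thus arrange $\tilde x=\tilde r$, after which $x_n/r_n\equiv1$ converges with no extra work. For a cone $X$ a rotation need not be a self-isometry of $X$, so this normalization is unavailable and $x_n/r_n$ may fail to converge. This is exactly where the passage to a subsequence enters. I would fix $\tilde x_0\in\tilde X_{0,\tilde r}$ with $\tilde d(\tilde 0,\tilde x_0)=c_0>0$ (if no such element exists, then $\tilde d(\tilde 0,\tilde x)=0$ for all $\tilde x$ and every ratio $x_n/r_n$ tends to $0$, a degenerate situation handled directly below) and pass to a subsequence $\tilde r'=\{r_{n_k}\}$ along which $(x_0)_{n_k}/r_{n_k}\to w_0$ for some $w_0\in X$ with $|w_0|=c_0$; here $w_0\in X$ because each ratio lies in the cone $X$ and $X$ is closed.

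Now I would rerun the circle--intersection argument of Lemma \ref{4:l2.13} (relations \eqref{4:eq22.31}--\eqref{4:eq22.34} and Fig.~1) with $w_0$ in place of $1$. For an arbitrary $\tilde y\in\tilde X_{0,\tilde r}$ and any limit point $y^{*}$ of $\{y_{n_k}/r_{n_k}\}$ one has $|y^{*}|=\tilde d(\tilde 0,\tilde y)$ and $|y^{*}-w_0|=\tilde d(\tilde x_0,\tilde y)$, while the betweenness relation forces these two radii into the degenerate case of the triangle inequality; hence $y^{*}$ is the unique (tangency) point of the two circles, which lies on the line $\mathbb R w_0$. Thus every limit point equals the same real multiple $s_y w_0$, and since $\{y_{n_k}/r_{n_k}\}$ is bounded it follows that $y_{n_k}/r_{n_k}\to s_y w_0$ for \emph{every} $\tilde y\in\tilde X_{0,\tilde r}$. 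In particular all subsequential ratios converge along the single sequence $\tilde r'$, which is precisely what was missing.

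Finally, because $X$ contains three noncollinear points it is not contained in the line $\mathbb R w_0$, so I may choose $w_1\in X$ with $w_0,w_1$ linearly independent over $\mathbb R$. Put $\tilde p:=\{r_{n_k}w_0\}_{k}$ and $\tilde q:=\{r_{n_k}w_1\}_{k}$, whose terms lie in $X$ since $X$ is a cone. The convergence just established gives, for each $\tilde x\in\tilde X_{0,\tilde r}$, that $\tilde d(\tilde p,\tilde x')=|w_0-s_xw_0|$ and $\tilde d(\tilde q,\tilde x')=|w_1-s_xw_0|$ exist, and clearly $\tilde d(\tilde p,\tilde q)=|w_0-w_1|$; hence $\{\tilde x':\tilde x\in\tilde X_{0,\tilde r}\}\cup\{\tilde p,\tilde q\}$ is self-stable. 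By Zorn's Lemma I extend it to a maximal self-stable family $\tilde X_{0,\tilde r'}$, which then satisfies the inclusion \eqref{1.3"}. Since $0,w_0,w_1$ form a nondegenerate triangle, the pair $\tilde p,\tilde q\in\tilde X_{0,\tilde r'}$ satisfies the strict inequality \eqref{8:eq8.3}, completing the proof. The main obstacle, and the reason a subsequence is unavoidable, is the loss of rotational freedom noted above; the leverage that makes it harmless is that pinning down the single reference ratio $(x_0)_{n_k}/r_{n_k}\to w_0$ automatically forces convergence of all other ratios through the tangency argument.
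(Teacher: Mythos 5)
Your proof is correct and takes essentially the approach the paper intends: the paper offers no separate argument for Lemma \ref{8:l2.17}, saying only that it follows by ``a modification of the proof of Lemma \ref{4:l2.13}'', and your write-up supplies exactly that modification --- replacing the rotational normalization $\tilde{x}=\tilde{r}$ (unavailable since rotations need not preserve the cone $X$) by a compactness/subsequence step fixing $w_0$, rerunning the tangent-circles argument of \eqref{4:eq22.31}--\eqref{4:eq22.34} with $w_0$ in place of $1$, and ending with the constructive Zorn extension by $\tilde{p}=\{r_{n_k}w_0\}$, $\tilde{q}=\{r_{n_k}w_1\}$ instead of the contradiction used there. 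All steps check out, including the dichotomy (strict inequality for some pair versus equality \eqref{4:eq22.27} for all pairs), the degenerate case $\tilde{d}(\tilde{0},\tilde{x})\equiv 0$, and the strictness of \eqref{8:eq8.3} for $\tilde{p},\tilde{q}$ from the noncollinearity of $0, w_0, w_1$.
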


The following proposition is a model case of Theorem \ref{t:4.1}.

\begin{proposition}
\label{8:p2.18} Let $X$ be a set from Lemma \ref{8:l2.16}. Then
the conclusion of Theorem \ref{t:4.1} is valid for every
$\Omega_{0, \tilde{r}}.$
\end{proposition}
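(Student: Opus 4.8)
Since $X$ is itself a closed cone with vertex $0$, we have $Con_0(X)=X$, so the conclusion of Theorem \ref{t:4.1} to be established reduces to two assertions: \emph{(a)} every pretangent space $\Omega_{0,\tilde r}$ lies in a tangent space, and \emph{(b)} whenever $\Omega_{0,\tilde r}$ is tangent there is an isometry $\psi\colon\Omega_{0,\tilde r}\to X$ with $\psi(p(\tilde 0))=0$. My plan is to first isolate a \emph{model} class of maximal self-stable families for which both the isometry and tangency follow directly from Lemma \ref{8:l2.16}, and then to reduce the general case to this model class by passing to a subsequence, using Lemma \ref{8:l2.17}.

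The model step is the cone analogue of Lemma \ref{4:l2.10}. Suppose $\tilde X_{0,\tilde r}$ is maximal self-stable, that \eqref{8:eq8.3} holds for some $\tilde x,\tilde y\in\tilde X_{0,\tilde r}$, and that the sequences $\tilde x/\tilde r,\tilde y/\tilde r$ in \eqref{8:eq22.36} converge. Then Lemma \ref{8:l2.16}(iii) furnishes a distance-preserving surjection $f\colon\tilde X_{0,\tilde r}\to X$, $f(\tilde z)=z^{\ast}$; since $f(\tilde 0)=0$, this descends under the metric identification to an isometry $\psi\colon\Omega_{0,\tilde r}\to X$ with $\psi(p(\tilde 0))=0$. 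To see that $\Omega_{0,\tilde r}$ is moreover tangent, I would invoke Proposition \ref{1:p1.5} and verify surjectivity of $em'$ for an arbitrary subsequence $\tilde r'$. The key point is that the restricted pair $\tilde x',\tilde y'$ still satisfies \eqref{8:eq8.3} (the numbers $\tilde d$ are subsequence-invariant by \eqref{1:eq1.3}) and that $\tilde x/\tilde r',\tilde y/\tilde r'$ remain convergent, so Lemma \ref{8:l2.16} applies equally to any maximal $\tilde X_{0,\tilde r'}$ containing the restrictions and yields a distance-preserving surjection $f'$. Transporting $f,f'$ to isometries $is\colon X\to\Omega_{0,\tilde r}$ and $is'\colon X\to\Omega_{0,\tilde r'}$ and repeating the diagram chase of Lemma \ref{4:l2.10} verbatim, with $\mathbb C$ replaced by $X$, gives $em'\circ is=is'$, whence $em'$ is onto. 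Thus $\Omega_{0,\tilde r}$ is tangent and isometric to $X$.

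It remains to remove the two extra hypotheses. Given an arbitrary maximal self-stable $\tilde X_{0,\tilde r}$, Lemma \ref{8:l2.17} produces a subsequence $\tilde r'$ and a maximal $\tilde X_{0,\tilde r'}\supseteq\{\tilde x':\tilde x\in\tilde X_{0,\tilde r}\}$ for which \eqref{8:eq8.3} holds with some $\tilde x,\tilde y$. As $|x_n|/r_n$ and $|y_n|/r_n$ are bounded (their limits are the finite numbers $\tilde d(\tilde 0,\tilde x),\tilde d(\tilde 0,\tilde y)$), I would pass to a further subsequence $\tilde r''$ of $\tilde r'$ along which both quotients in \eqref{8:eq22.36} converge, by Bolzano--Weierstrass, and then choose a maximal $\tilde X_{0,\tilde r''}$ containing $\{\tilde z'':\tilde z\in\tilde X_{0,\tilde r'}\}$. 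This family satisfies the hypotheses of the model step, so $\Omega_{0,\tilde r''}$ is tangent and isometric to $X$; and since $\{\tilde x'':\tilde x\in\tilde X_{0,\tilde r}\}\subseteq\tilde X_{0,\tilde r''}$, Definition \ref{8:d2.14} shows that $\Omega_{0,\tilde r}$ lies in the tangent space $\Omega_{0,\tilde r''}$, which is \emph{(a)}. For \emph{(b)}, if $\Omega_{0,\tilde r}$ is itself tangent, then by Definition \ref{1:d1.4} the embedding $em'\colon\Omega_{0,\tilde r}\to\Omega_{0,\tilde r''}$ is an isometry; composing it with the model isometry $\Omega_{0,\tilde r''}\to X$ yields the required $\psi$, and since restricting the constant sequence $\tilde 0$ leaves it unchanged, $em'$ preserves the base point, so $\psi(p(\tilde 0))=0$.

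The main obstacle is the tangency half of the model step: everything else is a repackaging of Lemma \ref{8:l2.16} together with a subsequence extraction, whereas the surjectivity of $em'$ must be read off the commuting square relating $f,f',is,is'$ and the embedding $em'$, so I would need to reproduce faithfully the diagram in the proof of Lemma \ref{4:l2.10} and check that each face still commutes when the target $\mathbb C$ is replaced by the cone $X$. The one subtlety to watch is that, unlike in the $\mathbb C$ case, one cannot rotate $\tilde x,\tilde y$ into a normal position by isometries preserving $X$; this is exactly why convergence of $\tilde x/\tilde r,\tilde y/\tilde r$ is taken as a hypothesis in Lemma \ref{8:l2.16} and then secured by Bolzano--Weierstrass rather than by a normalization.
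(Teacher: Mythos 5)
Your proposal is correct and follows essentially the same route as the paper's proof: Lemma \ref{8:l2.17} followed by a Bolzano--Weierstrass extraction to reach a subsequence where Lemma \ref{8:l2.16} applies, the diagram chase of Lemma \ref{4:l2.10} (with $\mathbb C$ replaced by $X$) to get surjectivity of the embedding and hence tangency via Proposition \ref{1:p1.5}, and, in the tangent case, the composition of the isometric embedding with the descended isometry onto $X$ preserving the base point. The only difference is organizational --- you isolate a ``model step'' that the paper argues inline --- and your closing remark about why convergence of $\tilde x/\tilde r$, $\tilde y/\tilde r$ must be hypothesized (no normalizing rotations preserve the cone) accurately reflects the paper's design.
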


\begin{proof}
Let $\tilde r$ be a normalizing sequence and let $\tilde
X_{0,\tilde r} $ be a maximal self-stable family with
 a corresponding pretangent space $\Omega_{0,\tilde r} $. By Lemma
\ref{8:l2.17}  we may suppose, passing, if necessary, to a
subsequence of $\tilde r$, that there exist $\tilde{x}=\left\{
x_{n}\right\} _{n\in\mathbb{N}}$ and \ $\tilde{y}=\left\{
y_{n}\right\} _{n\in\mathbb{N}}$ in $\tilde{X}_{0,\tilde{r}}$ such
that \eqref{8:eq8.3} holds.
Since the sequences%
\begin{equation*}
\left\{ \frac{x_{n}}{r_{n}}\right\}_{n\in\mathbb{N}},~~\left\{
\frac{y_{n}}{r_{n}}\right\} _{n\in\mathbb{N}}
\end{equation*}%
are bounded, there is a subsequences $\tilde{r}^{\prime }=\left\{
r_{n_{k}}\right\} _{k\in\mathbb{N}}$ of sequence $\tilde{r}$  such
that
\begin{equation*}
\left\{ \frac{x_{n_{k}}}{r_{n_{k}}}\right\} _{k\in\mathbb{N}
},~~\left\{ \frac{y_{n_{k}}}{r_{n_{k}}}\right\} _{k\in\mathbb{N}}
\end{equation*}%
are convergent. Let  $\tilde{X}_{0,\tilde{r}^{\prime }}$ be a
maximal self-stable  family such that
\begin{equation}\label{*5}
\tilde{X}_{0,\tilde{r}^{\prime }}\supseteq
\left\{\tilde{x}^{\prime }:\tilde{x}\in
\tilde{X}_{0,\tilde{r}}\right\}.
\end{equation}%
Write  $\Omega _{0,\tilde{r}^{\prime}}$ for the metric
identifications of  of $\tilde{X}_{0,\tilde{r}^{\prime}}$. We
claim that $\Omega _{0,\tilde{r}^{\prime }}$ is tangent.
Indeed, since all suppositions of Lemma \ref{8:l2.16} are valid, a limit%
\begin{equation*}
z^{\ast }=\underset{k\rightarrow \infty }{\lim
}\frac{z_{k}}{r_{n_{k}}}
\end{equation*}%
exists for every $\tilde{z}=\left\{ z_{k}\right\} _{k\in\mathbb{N}
}\in \tilde{X}_{0,\tilde{r}^{\prime}}$ and the mapping
\begin{equation}\label{*6}
\tilde{X}_{0,\tilde{r}^{\prime }}\ni \tilde{z}\overset{f^{\prime
}}{\longmapsto }z^{\ast}\in X
\end{equation}%
is distance-preserving and onto. Hence there is an isometry
$is^{\prime}:X\longrightarrow \Omega_{0,\tilde{r}^{\prime}}$ such
that the diagram
\begin{equation}\label{*7}
\begin{diagram}
\node{\tilde X_{0,\tilde r'}}      \arrow[2]{e,t}{f'}
                                    \arrow{se,b}{p'}
\node[2]{X}\arrow{sw,b}{is'}\\
\node[2]{\Omega_{0,\tilde r'}}
\end{diagram}
\end{equation}
is commutative. In particular we have $is'(0)=p'(\tilde 0)$.
Similarly, for every infinite subsequence $\tilde{r} ^{\prime
\prime }$ of $\tilde{r}^{\prime }$ and for every maximal
self-stable
\begin{equation*}
\tilde{X}_{0,\tilde{r}^{\prime \prime }}\supseteq \left\{\tilde{%
x}^{\prime }:\tilde{x}\in \tilde{X}_{0,\tilde{r}^{\prime
}}\right\}
\end{equation*}%
there are a distance-preserving surjection $f^{\prime \prime }:\tilde{X}%
_{0,\tilde{r}^{\prime \prime }}\longrightarrow X$ and an isometry $%
is^{\prime \prime }:X\longrightarrow \Omega _{0,\tilde{r}^{\prime
\prime }}$   with the commutative diagram
$$
\begin{diagram}
\node{\tilde X_{0,\tilde r}} \arrow[2]{e,t}{f''}\arrow{se,b}{p''}
 \node[2]{X}
           \arrow{sw,b}{is''}                      \\
\node[2]{\Omega_{0,\tilde r''}}
\end{diagram}
$$
and with $is''(0)=p''(\tilde 0)$, where $p^{\prime \prime }$ is
the metric identification mapping of the pseudometric space
$\tilde{X}_{0,\tilde{r}^{\prime \prime}}.$ As in the case of
diagram \eqref{4:eq2.26*} we can find an isometric embedding
$\text{em}'':\Omega_{0,\tilde r'}\to\Omega_{0,\tilde r''}$ such
that $\text{em}''\circ p'=p''\circ\text{in\,}r''$ where
$\text{in}''(\tilde x')=(\tilde x')'\in\tilde X_{0,\tilde r''}$
for $\tilde x'\in\tilde X_{0,\tilde r'}$. Repeating the proof of
the commutativity of \eqref{4:eq2.26} we see that the diagram
\begin{equation}\label{8:eq2.45}
\begin{diagram}
\node{\tilde X_{0,\tilde r'}} \arrow[2]{e,t}{\text{in}\,r''}
                                \arrow{se,t}{f'}
                               \arrow[2]{s,l}{p'}
\node[2]{\tilde X_{0,r''}}     \arrow{sw,t}{f''}
                                \arrow[2]{s,r}{p''}\\
\node[2]{X}                     \arrow{sw,t}{is'}
                                \arrow{se,t}{is''}\\
\node{\Omega_{0,\tilde r'}}    \arrow[2]{e,t}{em''}
\node[2]{\Omega_{0,\tilde r''}}
\end{diagram}
\end{equation}
is also commutative. Hence $em^{\prime \prime }$ is surjective because $%
f^{\prime }$ and $is^{\prime \prime }$ are surjections.
Consequently, by Proposition \ref{1:p1.5}, $\Omega
_{0,\tilde{r}^{\prime }}$ is tangent and, as was shown above, for
the isometry $is':X\to \Omega_{0,\tilde r}$ we have
$is'(0)=\alpha$ where $\alpha=p'(\tilde 0)$. Thus, by
definition, $\Omega _{0,\tilde{r}}$ lies in the tangent space $\Omega _{0,%
\tilde{r}^{\prime }}.$

Suppose now that $\Omega_{0,\tilde r}$, the metric identification
of $\tilde X_{0,\tilde r}$, is tangent. To prove the existence of
an isometry
$$
\psi:\Omega_{0,\tilde r}\to X\quad\text{with}\quad \psi(\alpha)=0,
$$
consider, as in the first part of the present proof, a maximal
self-stable family $\tilde X_{0,\tilde r}$ such that inclusion
\eqref{*5} holds and diagram \eqref{*7} is commutative for
function \eqref{*6}. Since $is'$ is an isometry, the commutativity
of \eqref{*7} implies $f'=(is')^{-1}\circ p'$ where $(is')^{-1}$
is the inverse function of $is'$. Then combining the last equality
with \eqref{1:eq1.4} we obtain the commutative diagram
\begin{equation}\label{*8}
\begin{diagram}
\node{\tilde X_{0, \tilde r}} \arrow[2]{e,t}{in_{\tilde r'}}
\arrow{s,l}{p}
                               \node[2]{\tilde X_{0, \tilde r'}}\arrow[2]{e,t}{f'}
                                \arrow{s,r}{p'} \node[2]{X}\\
\node{\Omega_{0, \tilde r}} \arrow[2]{e,t}{em'}
              \node[2]{\Omega_{0, \tilde r'}} \arrow{ene,b}{(is')^{-1}}
\end{diagram}
\end{equation}
Write $ \psi=(is')^{-1}\circ em'. $ For tangent spaces the mapping
$em'$ is an isometry, so $\psi$ is an isometry as a superposition
of two isometries. Note that the commutativity of diagram
\eqref{*8} implies the equality $\psi(\alpha)=0$ for
$\alpha=p(\tilde 0)$.
\end{proof}

\begin{lemma}\label{l:4.6}
Let $Y\subseteq\mathbb C$ be a starlike set with a center $a$ and
let $X:=Con_a(Y)$. Then $X$ and $Y$ are strongly tangent
equivalent at the point $a$.
\end{lemma}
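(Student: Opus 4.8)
The plan is to invoke Proposition~\ref{6:t5.4}, which reduces the claim to the verification of
$$
\lim_{t\to0}\frac{\varepsilon_a(t)}{t}=0,\qquad \varepsilon_a(t)=\varepsilon_a(t,X,Y)\vee\varepsilon_a(t,Y,X).
$$
Since $Y\subseteq X=Con_a(Y)$, each $y\in S_t^Y$ already lies in $X$, so $\varepsilon_a(t,Y,X)=0$ and hence $\varepsilon_a(t)=\varepsilon_a(t,X,Y)=\sup_{x\in S_t^X}\dist(x,Y)$. After translating so that $a=0$ (translations are isometries of $\mathbb C$ respecting starlikeness and the cone construction, so nothing essential changes), I would describe $X$ concretely: writing $D=\{b/|b|:b\in Y\setminus\{0\}\}$ for the set of unit directions of $Y$, one checks that $X=\overline{\{r\omega:r\ge0,\ \omega\in D\}}$ and that the set of unit vectors of $X$ is exactly the compact set $\bar D\subseteq\{|z|=1\}$, so that $S_t^X=\{t\omega:\omega\in\bar D\}$.

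The decisive move is a rescaling. For $|x|=t$ write $x=t\omega$ with $\omega\in\bar D$; then
$$
\dist(t\omega,Y)=t\,\dist\!\Big(\omega,\tfrac1tY\Big),
$$
so that $\varepsilon_a(t,X,Y)/t=\sup_{\omega\in\bar D}g_t(\omega)$ with $g_t(\omega):=\dist(\omega,\tfrac1tY)$. Each $g_t$ is $1$-Lipschitz in $\omega$, hence continuous, and the starlikeness of $Y$ gives the monotonicity $\tfrac1tY\subseteq\tfrac1{t'}Y$ for $0<t'\le t$ (because $\lambda Y\subseteq Y$ for $\lambda\in(0,1]$), so the functions $g_t$ decrease as $t\downarrow0$.

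Next I would establish pointwise convergence $g_t(\omega)\to0$ for each fixed $\omega\in\bar D$: choosing $\omega_k\in D$ with $\omega_k\to\omega$ and $b_k\in Y$ with $\omega_k=b_k/|b_k|$, starlikeness yields $t\omega_k\in[0,b_k]\subseteq Y$ whenever $t\le|b_k|$, whence $\omega_k\in\tfrac1tY$ and $g_t(\omega)\le|\omega-\omega_k|$ for all small $t$. Letting $k\to\infty$ forces $\lim_{t\to0}g_t(\omega)=0$.

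The point requiring real care — the passage from pointwise to uniform convergence, which is precisely what the supremum over $\bar D$ demands — is handled by Dini's theorem: the sequence $\{g_{1/n}\}_{n\in\mathbb N}$ is a decreasing sequence of continuous functions on the compact set $\bar D$ converging pointwise to the continuous function $0$, hence converges uniformly, and monotonicity in $t$ then gives $\sup_{\omega\in\bar D}g_t(\omega)\to0$ as $t\to0$. This yields $\varepsilon_a(t)/t\to0$, and the lemma follows from Proposition~\ref{6:t5.4}. I expect the concrete identification of $X$ with $\overline{\{r\omega:r\ge0,\ \omega\in D\}}$ and the checking of the Dini hypotheses (compactness of $\bar D$, continuity of the limit, and the monotonicity coming from starlikeness) to be the only steps needing attention; the rest is routine.
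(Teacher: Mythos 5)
Your proof is correct, and while it shares the paper's skeleton, the decisive step is executed by a genuinely different mechanism. Like the paper, you reduce via Proposition~\ref{6:t5.4} to showing $\lim_{t\to0}\varepsilon_a(t,X,Y)/t=0$ (the other direction vanishing since $Y\subseteq X$), and your identification $X=\overline{\{r\omega:r\ge0,\ \omega\in D\}}$ with unit sphere $\bar D$ is exactly the paper's equality $Cl(\hat Y)=X$ together with $Cl(S_1^{\hat Y})=S_1^X$, where $\hat Y$ is the smallest (not necessarily closed) cone over $Y$. The divergence is in how uniformity over the compact sphere is obtained. The paper argues by contradiction: assuming $\limsup_{t\to0}\varepsilon_a(t,X,Y)/t=\delta_0>0$, it extracts a finite $\frac{\delta_0}2$-net $\{y_1,\dots,y_n\}\subseteq S_1^{\hat Y}$ of $S_1^X$ and invokes starlikeness only along these finitely many rays to produce one $\gamma>0$ below which every point of $S_t^X$ lies within $t\delta_0/2$ of $Y$. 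You instead argue directly: the observation that starlikeness gives $\lambda Y\subseteq Y$ for $\lambda\in(0,1]$ --- a structural monotonicity the paper never isolates --- makes the $1$-Lipschitz functions $g_t=\dist(\cdot,\tfrac1tY)$ on $\bar D$ decrease pointwise to $0$ as $t\downarrow0$, and Dini's theorem upgrades pointwise to uniform convergence. The two mechanisms are cousins (Dini's theorem is itself proved by a finite-subcover argument in the spirit of the paper's finite net), but your route is direct rather than by contradiction, and it cleanly separates the two uses of starlikeness: monotonicity of $t\mapsto\tfrac1tY$, and density of $D$ in $\bar D$ for the pointwise limit. Two cosmetic remarks only: the degenerate case $Y=\{a\}$ (where $\bar D=\emptyset$ and $S_t^X=\emptyset$) deserves the one-line dismissal the paper gives it, and your ``one checks'' for the description of $X$ is precisely the paper's short verification that the closure of a cone is a closed cone, whence $Cl(\hat Y)=Con_a(Y)$; both are routine and do not affect correctness.
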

\begin{proof}
If $Y=\{a\}$ then $X=\{a\}$ and this lemma is trivial.
Consequently, we may suppose that $a$ is a limit point of $Y$.

Let us denote by $\hat Y$ the smallest (but not necessarily
closed) cone with the vertex $a$ and such that
$
\hat Y\supseteq Y.
$
Then we have the equality
\begin{equation}\label{*1}
Cl(\hat Y)=X
\end{equation}
where $Cl(\hat Y)$ is the closure of $\hat Y$ in $\mathbb C$.
Indeed, it is easy to prove that $Cl(\hat Y)$ is a closed cone.
Consequently, the inclusion $Cl(\hat Y)\supseteq Con_a(Y)=X$
holds. On the other hand $Con_a(Y)$ is a cone. Hence
$Con_a(Y)\supseteq\hat Y$. It implies
$$
X=Cl(Con_a(Y))\supseteq Cl(\hat Y)
$$
and \eqref{*1} follows.

Write for $t>0$
$$
S_t^Y:=\{y\in Y:|a-y|=t\}\quad\text{and}\quad S_t^X:=\{x\in
X:|a-x|=t\},
$$
i.e., $S_t^Y$ and $S_t^X$ are the spheres in $Y$ and,
respectively, in $X$ with the center $a$ and radius $t$.

Since $Y\subseteq X$ it is sufficient, by Proposition
\ref{6:t5.4}, to prove that
\begin{equation}\label{*2}
\lim_{t\to0}\frac{\varepsilon_a(t,X,Y)}{t}=0
\end{equation}
where
$$
\varepsilon_a(t,X,Y)=\sup_{x\in S_t^X}\inf_{y\in Y}|x-y|.
$$
If equality \eqref{*2} does not hold then there exists
$\delta_0>0$ such that
\begin{equation}\label{*3}
\limsup_{t\to0}\frac{\varepsilon_a(t,X,Y)}t=\delta_0.
\end{equation}
Equality  \eqref{*1} implies that
\begin{equation}\label{*4}
Cl(S_1^{\hat Y})=S_1^X
\end{equation}
where $S_1^{\hat Y}=\{y\in\hat Y:|y-a|=1\}$. Since $S_1^X$ is a
compact subset of $\mathbb C$, it follows from \eqref{*4} that
there is a finite $\frac{\delta_0}2$-net
$\{y_1,\dots,y_n\}\subseteq S_1^{\hat Y}$ for the set $S_1^X$. The
starlikeness of $Y$ implies that there is $\gamma>0$ such that the
implication
$$
(|z-a|<\gamma)\Rightarrow(z\in Y)
$$
is true for every point $z\in\bigcap_{i=1}^nl_a(y_i)$ where
$l_a(y_i)$ are rays starting from $a$ and passing through $y_i$,
see \eqref{8:eq22.34}. Hence for all $t\in\,]0,\gamma[\,$ we have
the inequality
$$
\frac{\varepsilon_a(t,X,Y)}t<\frac{\delta_0}2,
$$
contrary to \eqref{*3}.
\end{proof}

\begin{proof}[Proof of Theorem \ref{t:4.1}]
Without loss of generality we may put $a=0$. Let $\tilde
Y_{a,\tilde r}$ be a maximal self-stable family and let
$\Omega_{a,\tilde r}^Y$ be the corresponding pretangent space.
Write $X=Con_a(Y)$. Then by Lemma \ref{l:4.6} the sets $X$ and $Y$
are strongly tangent equivalent at the point $a$. It follows from
Proposition \ref{8:p2.18} that every pretangent space
$\Omega_{a,\tilde r}^X$ lies in some tangent $\Omega_{a,\tilde
r'}^X$. Consequently, using Proposition \ref{p:2.4}, we have that
$\Omega_{a,\tilde r}^Y$ lies in some tangent space
$\Omega_{a,\tilde r'}^Y$. Suppose now that $\Omega_{a,\tilde r}^X$
is tangent. Write $\tilde X:=[\tilde Y_{a,\tilde r}]_X$. Then
Statement (ii) of Proposition \ref{6:eq5.2} implies that
$\Omega_{a,\tilde r}^X$, the metric identification of $\tilde
X_{a,\tilde r}$, also is tangent. Hence, by Proposition
\ref{8:p2.18}, there is an isometry
$$
\psi^X:\Omega_{a,\tilde r}^X\to X,\qquad \psi^X(\alpha)=a,
$$
with
$$
\tilde X_{a,\tilde r}\ni\tilde
a=(a,\dots,a,\dots)\overset{p}{\to}\alpha\in\Omega_{a,\tilde r}^X.
$$
where $p$ is the projection of $\tilde X_{a,\tilde r}$ on
$\Omega_{a,\tilde r}$. Statement (ii) of Proposition \ref{6:p5.2}
implies that the mapping
$$
\Omega_{a,\tilde
r}^Y\ni\alpha\overset{\nu}{\longmapsto}[\alpha]_X\in\Omega_{a,\tilde
r}^X
$$
is an isometry.  It is easy     to see that the mapping
$\Omega_{a,\tilde r}^Y\overset{\nu}{\to}\Omega_{a,\tilde
r}^X\overset{\psi^X}{\to}X$ is an isometry with the desirable
properties.
\end{proof}
In the next proof we use the notation from the proof of Lemma
\ref{l:4.6}.
\begin{proof}[Proof of Corollary \ref{8:t2.15}]
It follows from the first part of Theorem \ref{t:1.7} that we must
only to prove the equality
\begin{equation}\label{*9}
Con_a(Y)=Conv_a(Y)
\end{equation}
for convex sets $Y\subseteq\mathbb C$ and  $a\in Y$. To prove
this, note that the cone $\hat Y$ is convex for the convex $Y$,
see, for example, \cite[Chapter I, \S 2, Corollary~2.6.3]{Ro} and
that $Cl(\hat Y)$ also is convex \cite[Chapter II, \S6, Theorem
6.1]{Ro}. Moreover, as has been stated in the proof of Lemma
\ref{l:4.6}, the closure of every cone is a cone. Hence
$$
Cl(\hat Y)\supseteq Conv_a (Y).
$$
This inclusion and equality \eqref{*1} imply the inclusion
$Con_a(Y)\supseteq Conv_a(Y)$. Since the reverse inclusion
$Conv_a(Y)\supseteq Con_a(Y)$ is trivial, we obtain \eqref{*9}.
\end{proof}
\begin{proof}[Proof of Theorem \ref{t:1.11}]
Let $Z$ be a starlike set with the center $a$ such that $Z$ and
$X$ are strongly tangent equivalent at the point $a$. To prove the
theorem under consideration we can repeat the proof of Theorem
\ref{t:4.1} using $Z$ instead of $Con_a(Y)$, $X$ instead of $Y$
and Theorem \ref{t:4.1} in place of Proposition \ref{8:p2.18}.
\end{proof}
\begin{lemma}\label{l:4.6*}
Let $X\subseteq\mathbb C$ be a set with a marked point $a$ and let
$l$ be a ray with the vertex $a$. Then we have the equality
\begin{equation}\label{*10}
\lim_{\beta\to0}p(R(X,l,\beta))=\lim_{\beta\to0}p(R(Cl(X),l,\beta)).
\end{equation}
\end{lemma}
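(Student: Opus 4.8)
The plan is to deduce \eqref{*10} from two opposite inequalities between the monotone limits, the engine being the elementary fact that the right-side porosity at $0$ of a subset of $\mathbb R$ does not change when the set is replaced by its closure.

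First I would record the relevant monotonicity. Since $\beta_1<\beta_2$ gives $\Gamma(a,l,\beta_1)\subseteq\Gamma(a,l,\beta_2)$, we get $R(X,l,\beta_1)\subseteq R(X,l,\beta_2)$, and because a larger subset of $\mathbb R$ has smaller right-side porosity, the function $\beta\mapsto p(R(X,l,\beta))$ is non-increasing; as porosity never exceeds $1$, the limit $\lim_{\beta\to0}p(R(X,l,\beta))$ exists as a bounded monotone limit, and likewise for $Cl(X)$. The trivial inclusion $X\subseteq Cl(X)$ yields $R(X,l,\beta)\subseteq R(Cl(X),l,\beta)$, whence $p(R(Cl(X),l,\beta))\le p(R(X,l,\beta))$ for every $\beta$, and therefore one of the two inequalities in \eqref{*10}.

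The core step is the reverse estimate, for which I would prove the set inclusion
$$
R(Cl(X),l,\beta)\subseteq Cl\bigl(R(X,l,\beta')\bigr)\cup\{0\}\qquad(\beta'>\beta).
$$
Indeed, if $t=|z-a|>0$ with $z\in Cl(X)\cap\Gamma(a,l,\beta)$, choose $z_k\in X$ with $z_k\to z$; then $|z_k-a|\to t>0$ and $\dist(z_k,l)\to\dist(z,l)\le\beta t$, so the strict inequality $\beta<\beta'$ leaves room to conclude that $\dist(z_k,l)\le\beta'|z_k-a|$ for all large $k$. Hence $z_k\in\Gamma(a,l,\beta')$, so $|z_k-a|\in R(X,l,\beta')$ and $t=\lim_k|z_k-a|\in Cl(R(X,l,\beta'))$; the value $t=0$ is absorbed into the extra $\{0\}$. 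Applying monotonicity of porosity under inclusion, together with the closure-invariance of porosity and the fact that adjoining the single point $0$ changes no interval length, I obtain
$$
p(R(X,l,\beta'))=p\bigl(Cl(R(X,l,\beta'))\cup\{0\}\bigr)\le p(R(Cl(X),l,\beta))\qquad(\beta'>\beta).
$$
Fixing $\beta'$ and taking $\beta=\beta'/2$ bounds $p(R(X,l,\beta'))$ by $\lim_{\beta\to0}p(R(Cl(X),l,\beta))$; letting $\beta'\to0$ then delivers the remaining inequality, and \eqref{*10} follows.

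The main obstacle is the closure-invariance lemma, namely that $p(A)=p(Cl(A))$ at $0$ for $A\subseteq\mathbb R$. I would establish it by observing that an \emph{open} interval is disjoint from $A$ if and only if it is disjoint from $Cl(A)$ (an open set meeting $Cl(A)$ must already meet $A$), so the open gaps of $A$ and of $Cl(A)$ inside $[0,h]$ coincide; since the length of the longest interval in the complement equals the supremum of the lengths of these open gaps, we get $l(0,h,A)=l(0,h,Cl(A))$ for every $h$, hence equal porosities. The only other point needing care is the harmless bookkeeping around $t=0$ noted above, which is justified by the same insensitivity of lengths to a single point.
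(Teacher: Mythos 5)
Your proof is correct, and it follows the same two-inequality skeleton as the paper --- the easy direction from $X\subseteq Cl(X)$ (the paper's \eqref{*11}), the closure-invariance $p(A)=p(Cl(A))$ (the paper's \eqref{*12}), and the widening trick $\beta<\beta'$ (the paper's $\gamma<\beta$ in \eqref{*14}) --- but you implement the central step by a genuinely more elementary route. Where the paper transfers closures from the plane to the real line via the identity \eqref{*13}, obtained by invoking the closedness of the distance function $z\mapsto|z-a|$ (with citations to Engelking and Kuratowski), you prove directly, by a sequence argument with $\varepsilon$-room, the inclusion $R(Cl(X),l,\beta)\subseteq Cl\bigl(R(X,l,\beta')\bigr)\cup\{0\}$ for $\beta'>\beta$, taking the closure on the real line where only the trivial continuity inclusion $f(Cl(S))\subseteq Cl(f(S))$ is ever needed; your argument thus shows the closed-map machinery is dispensable (indeed, even in the paper's chain only the ``$\supseteq$'' half of \eqref{*13} is actually used). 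You also supply details the paper asserts without proof: the monotonicity of $\beta\mapsto p(R(X,l,\beta))$ guaranteeing that both limits in \eqref{*10} exist, the closure-invariance of porosity via the coincidence of open gaps of $A$ and $Cl(A)$, and the harmlessness of adjoining the point $0$ (which is safe precisely because $0$ is the left endpoint of $[0,h]$, so deleting or adding it never shortens a gap). What each approach buys: the paper's version is shorter and modular, leaning on standard topology references; yours is self-contained, avoids the closed-map criterion entirely, and makes explicit the small limit-interchange bookkeeping ($\beta=\beta'/2$, then $\beta'\to0$) that the paper compresses into its choice $\gamma=\beta/(1+\beta)$.
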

\begin{proof}
To prove \eqref{*10} note that
\begin{equation}\label{*11}
p(R(X,l,\beta))\geq p(R(Cl(X),l,\beta))
\end{equation}
because $X\subseteq Cl(X)$. On the other hand, Definition
\ref{9:d1} implies the equality
$$
p(A)=p(Cl(A))
$$
for every $A\subseteq\mathbb R$. Consequently,
\begin{equation}\label{*12}
p(R(X,l,\beta))=p(Cl(R(X,l,\beta))).
\end{equation}
Applying the well-known criterion, see \cite[Proposition
2.1.15]{En}, we see  that a distance function
$$
\mathbb C\ni x\longmapsto|x-a|\in\mathbb R
$$
is closed. The characteristic property of closed maps,
\cite[Chapter 1, \S13, XIV, formula (7)]{Ku} implies
\begin{equation}\label{*13}
Cl(R(X,l,\beta))=\{|z-a|:z\in Cl(X\cap\Gamma(a,l,\beta))\}.
\end{equation}
Moreover, it is easy to see that
\begin{equation}\label{*14}
Cl(X\cap\Gamma(a,l,\beta))\supseteq Cl(X)\cap\Gamma(a,l,\gamma)
\end{equation}
for every $\gamma<\beta$. Relations \eqref{*12}--\eqref{*14} imply
the inequality
$$
p(R(X,l,\beta))\leq p(R(Cl(X),l,\gamma))
$$
for every $\gamma<\beta$. For example we have
$$
p(R(X,l,\beta))\leq p(R(Cl(X),l,\frac\beta{1+\beta})).
$$
Letting $\beta\to0$ in the last inequality and in \eqref{*11} we
obtain \eqref{*10}.
\end{proof}
\begin{lemma}\label{l:4.7}
Let $X\subseteq \mathbb C$ be a set with a marked point $a$, $l$ a
ray with the vertex $a$, $\tilde r=\{r_n\}_{n\in\mathbb N}$ a
normalizing sequence, $\beta_0$ a positive constant and let
$\tilde x,\tilde y$ belong to $\tilde X$, $\tilde
x=\{x_n\}n_{n\in\mathbb N}$, $\tilde y=\{y_n\}_{n\in\mathbb N}$.
Suppose the following conditions are satisfied:
\begin{itemize}
\item[$(i)$] The family $\{\tilde x,\tilde y,\tilde a\}$ is
self-stable w.r.t. $\tilde r$;

\item[$(ii)$] The point $\tilde x$ lies between $\tilde a$ and
$\tilde y$, i.e.,
\begin{equation}\label{*15}
\tilde d_{\tilde r}(\tilde a,\tilde y)=\tilde d_{\tilde r}(\tilde
a,\tilde x)+\tilde d_{\tilde r}(\tilde x,\tilde
y)\quad\text{and}\quad \tilde d_{\tilde r}(\tilde a,\tilde
x)\tilde d_{\tilde r}(\tilde x,\tilde y)\ne0;
\end{equation}

\item[$(iii)$] For every $\beta>0$ there is $n_0\in\mathbb N$ such
that
$
x_n\in\Gamma(a,l,\beta)
$
for all $n\geq n_0$.
\end{itemize}
Then there is $m_0\in\mathbb N$ such that
$y_m\in\Gamma(a,l,\beta_0)
$
for all $m\geq m_0$.
\end{lemma}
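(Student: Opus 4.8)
The plan is to reduce the statement to the convergence of the sequence $\{y_n/r_n\}$ and to exploit the equality case of the triangle inequality that is encoded in the betweenness hypothesis \eqref{*15}. First I would normalize the geometry: applying a translation I may assume $a=0$, and applying a rotation of the plane I may assume that $l$ is the nonnegative real half-axis (both transformations preserve all the distances $|x_n|$, $|y_n|$, $|x_n-y_n|$, $\dist(\,\cdot\,,l)$ involved, and carry $\Gamma(a,l,\beta)$ to the corresponding sector at the new position, see \eqref{1.5*}). By $(i)$ the three limits $R_x:=\tilde d_{\tilde r}(\tilde 0,\tilde x)$, $R_y:=\tilde d_{\tilde r}(\tilde 0,\tilde y)$ and $D:=\tilde d_{\tilde r}(\tilde x,\tilde y)$ exist and are finite, and by $(ii)$ we have $R_x>0$, $D>0$ and $R_y=R_x+D$.

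Next I would extract the limiting direction of $\tilde x$ from $(iii)$. For $a=0$ and $l$ the nonnegative real axis one has, for $z=\rho e^{i\phi}$ with $\rho>0$, the elementary identity $\dist(z,l)/|z|=|\sin\phi|$ when $|\phi|\le\pi/2$ and $\dist(z,l)/|z|=1$ otherwise. Since $|x_n|/r_n\to R_x>0$ the terms $x_n$ are eventually nonzero, and $(iii)$, which amounts to $\dist(x_n,l)/|x_n|\to0$, then forces $\arg x_n\to0$. Combined with $|x_n|/r_n\to R_x$ this yields $x_n/r_n\to R_x$, a positive real number.

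The heart of the argument is a tangency computation of exactly the type pictured in Fig.~1 and used in Lemma~\ref{4:l2.13}. The sequence $\{y_n/r_n\}$ is bounded because $|y_n|/r_n\to R_y$; let $w$ be any of its limit points. Passing to the corresponding subsequence in the relations $|y_n|/r_n\to R_y$ and $|y_n/r_n-x_n/r_n|\to D$, and using $x_n/r_n\to R_x$, shows that $w$ lies on both circles $\{|w|=R_y\}$ and $\{|w-R_x|=D\}$. Writing $w=R_y e^{i\psi}$ and substituting into $|w-R_x|^2=D^2$ gives, after inserting $R_y=R_x+D$, the relation $2R_x(R_x+D)(1-\cos\psi)=0$; since $R_x>0$ this forces $\cos\psi=1$, that is $w=R_y$. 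Thus the two circles are internally tangent at the single point $R_y$ on the positive real axis, so every limit point of $\{y_n/r_n\}$ equals $R_y$.

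A bounded sequence whose only limit point is $R_y$ converges, so $y_n/r_n\to R_y>0$. In particular $\arg y_n\to0$, whence $\dist(y_n,l)/|y_n|=|\sin(\arg y_n)|\to0$, and for the fixed $\beta_0>0$ there is $m_0$ with $y_m\in\Gamma(a,l,\beta_0)$ for all $m\ge m_0$, which is the desired conclusion (indeed the argument gives slightly more, namely that $\tilde y$ has the same limiting direction $l$ as $\tilde x$). I expect the tangency step to be the main obstacle: one must recognize that the equality case $R_y=R_x+D$ of the triangle inequality, together with the already-known limiting direction of $\tilde x$, pins the limiting direction of $\tilde y$ down to a single point, and the short circle computation above is the clean way to make this rigorous.
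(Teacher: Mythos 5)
Your proof is correct and follows essentially the same route as the paper's: both reduce the claim to limit points of the bounded sequence $\{y_n/r_n\}$, use condition $(iii)$ together with $\tilde d_{\tilde r}(\tilde a,\tilde x)>0$ to place $\lim_n x_n/r_n$ on $l$, and exploit the betweenness equality $\tilde d_{\tilde r}(\tilde a,\tilde y)=\tilde d_{\tilde r}(\tilde a,\tilde x)+\tilde d_{\tilde r}(\tilde x,\tilde y)$ to pin any such limit point to the single point of $l$ at distance $\tilde d_{\tilde r}(\tilde a,\tilde y)$ from $a$. The only difference is presentational: the paper argues by contradiction, extracting a subsequence outside $\Gamma(a,l,\beta_0)$ whose limit $y^*$ would satisfy the strict inequality $|a-y^*|<|a-x^*|+|x^*-y^*|$ (asserted with a reference to Fig.~2), whereas you argue directly and make that step explicit via the internally tangent circles $\{|w|=R_y\}$ and $\{|w-R_x|=D\}$, so your write-up actually supplies the detail the paper leaves to the reader.
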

\begin{proof}
If the conclusion of the lemma does not hold, then there is a
strictly increasing sequence $\{n_k\}_{k\in\mathbb N}$ of natural
numbers such that
\begin{equation}\label{*16}
y_{n_k}\in\mathbb C\setminus\Gamma(a,l,\beta_0)
\end{equation}
for all $n_k$. Condition $(i)$ implies that
$\{\frac{y_{n_k}}{r_{n_k}}\}_{k\in\mathbb N}$ is a bounded
sequence. Hence, passing if necessary to a subsequence, we may
suppose that $\tilde y'$ is convergent. Condition $(iii)$ and the
existence of $\tilde d(\tilde a,\tilde x)$ imply that
$\{\frac{x_{n_k}}{r_{n_k}}\}_{k\in\mathbb N}$ also is convergent
and
\begin{equation}\label{*17}
x^*:=\lim_{k\to\infty}\frac{x_{n_k}}{r_{n_k}}\in l.
\end{equation}
Write
$$
y^*:=\lim_{k\to\infty}\frac{y_{n_k}}{r_{n_k}}.
$$
It follows from \eqref{*16} that
\begin{equation}\label{*18}
y^*\in\mathbb C\setminus Int(\Gamma(a,l,\beta_0))
\end{equation}
where $Int(\Gamma(a,l,\beta_0))$ is the interior of the sector
$\Gamma(a,l,\beta)$. Using \eqref{*17} and \eqref{*18} it is easy
to show that
$$
|a-y^*|<|a-x^*|+|x^*-y^*|,
$$
see Fig. 2. The last inequality contradicts \eqref{*15} because
$\tilde d_{\tilde r}(\tilde a,\tilde y)=|a-x^*|$, $\tilde
d_{\tilde r}(\tilde a,\tilde y)=|a-y^*|$ and $\tilde d_{\tilde
r}(\tilde x,\tilde y)=|x^*-y^*|$.
\begin{figure}[hbt]\centering
\includegraphics[width=11cm,keepaspectratio]{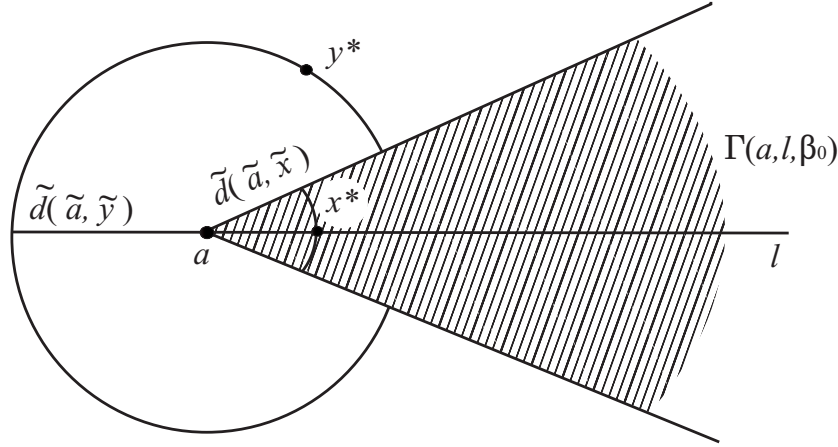}
\caption{The point $x^*$ cannot lie between $a$ and $y^*$.}
\end{figure}
\end{proof}
\begin{proof}[Proof of Theorem \ref{t:1.9}]
Firstly, note that without loss of generality we may assume $X$ to
be  closed. Indeed, by Corollary \ref{c:2.5*}, the sets $X$ and
$Cl(X)$ are strongly tangent equivalent for every $a\in X$, so
using Statement $(ii)$ of Proposition \ref{6:p5.2} we see that for
$X$ and for $Cl(X)$ the supposition of Theorem \ref{t:1.9} is true
(or false) simultaneously. Analogously, using Lemma \ref{l:4.6*}
we can replace $X$ by $Cl(X)$ in the conclusion of Theorem
\ref{t:1.9}.

Suppose there is a ray $l=l_a(b)$ such that
\begin{equation}\label{a10}
1>\lim_{\beta\to0}p(R(X,l,\beta)):=\gamma_0>0.
\end{equation}
Since function $p(R(X,l,\beta))$ is decreasing in $\beta$,
Definition \ref{9:d1} implies that for every $k\in]0,1[$ there is
$\beta_0>0$ with
\begin{equation}\label{a11}
\gamma_0\geq
p_0:=p(R(X,l,\beta_0))=\limsup_{h\to0}\frac{l(0,h,R(X,l,\beta_0))}h>k\gamma_0
\end{equation}
where $l(0,h,R(X,l,\beta_0))$ is the length of the longest
interval in $[0,h]\setminus R(X,l,\beta_0)$. Consequently, there
is a decreasing sequence $\{h_n\}_{n\in\mathbb N},\ h_n>0$,
with\linebreak $\lim_{n\to\infty}h_n=0$ such that
\begin{equation}\label{4.30*}
\lim_{n\to\infty}\frac{l(0,h_n,R(X,l,\beta_0))}{h_n}=p_0.
\end{equation}
Write $]r_n,t_n[$ for the longest open interval in
$[0,h_n]\setminus R(X,l,\beta_0)$. If $r_{m_0}=0$ for some
$m_0\in\mathbb N$, then \eqref{a10} does not hold. Thus we may
suppose $r_m>0$ for all $m\in\mathbb N$. Let
$\{\beta_n\}_{n\in\mathbb N}$ be a decreasing sequence of positive
numbers with $\lim_{n\to\infty}\beta_n=0$. Write
\begin{equation}\label{a12}
\tau_n:=\sup\{d(x,a):x\in B(a,r_n)\cap X\cap\Gamma(a,l,\beta_n)\}
\end{equation}
where $B(a,r_n)=\{x\in X:|x-a|\leq r_n\}$. The previous reasoning
gives the strict inequality
\begin{equation}\label{a13}
\tau_n>0
\end{equation}
for all $n\in\mathbb N$. Since $X$ is closed and $B(a,r_n)$ is
compact, there is  a sequence $\tilde x=\{x_n\}_{n\in\mathbb N}$
such that
\begin{equation}\label{a14}
|x_n-a|=\tau_n
\end{equation}
and
\begin{equation}\label{a15}
x_n\in B(a,\tau_n)\cap X\cap\Gamma(a,l,\beta_n)
\end{equation}
for all $n\in\mathbb N$. Let us obtain now some estimations for
$\lim_{m\to\infty}\frac{r_m}{\tau_m}$ and for
$\lim_{m\to\infty}\frac{t_m}{\tau_m}$. Using \eqref{4.30*} and the
definition of intervals $]r_m,t_m[$ and of the porosity of
$R(X,l,\beta_0)$ we see that
$$
p_0\geq\limsup_{m\to\infty}\frac{|r_m-t_m|}{t_m}\geq
\liminf_{m\to\infty}\frac{|r_m-t_m|}{h_m}=p_0,
$$
that is
\begin{equation}\label{a16}
p_0=\lim_{m\to\infty}\frac{|r_m-t_m|}{t_m}
\end{equation}
and so
\begin{equation}\label{a17}
\lim_{m\to\infty}\frac{r_m}{t_m}=1-p_0.
\end{equation}
Moreover, since the inclusion
$$
R(X,l,\beta_n)\supseteq R(X,l,\beta_{n+1})
$$
holds for all $n\in\mathbb N$, we obtain the inequality
\begin{equation}\label{a18}
\limsup_{m\to\infty}\frac{|\tau_m-t_m|}{t_m}\leq p(R(X,l,\beta_n))
\end{equation}
for all $\mathbb N$. Letting $n\to\infty$ we have
$$
\limsup_{m\to\infty}\frac{|\tau_m-t_m|}{t_m}\leq\gamma_0.
$$
Passing to a subsequence we may suppose that there exists a limit
\begin{equation}\label{a19}
\lim_{m\to\infty}\frac{|\tau_m-t_m|}{t_m}\leq\gamma_0.
\end{equation}
It is clear that
$
|r_m-t_m|\leq|\tau_m-t_m|
$
for all $m\in\mathbb N$. Hence, by \eqref{a11}, \eqref{a16} and by
\eqref{a19},
\begin{equation}\label{a20}
k\gamma_0<\lim_{m\to\infty}\frac{|r_m-t_m|}{t_m}\leq\lim_{m\to\infty}
\frac{|\tau_m-t_m|}{t_m}\leq\gamma_0.
\end{equation}
By the construction we have $\tau_m\leq r_m\leq t_m$. Thus
\eqref{a20} implies
\begin{equation}\label{a21}
k\gamma_0<1-\lim_{m\to\infty}\frac{\tau_m}{t_m}\leq\gamma_0
\end{equation}
or, in  an equivalent form,
\begin{equation}\label{a22}
\frac1{1-k\gamma_0}<\lim_{m\to\infty}\frac{t_m}{\tau_m}\leq\frac1{1-\gamma_0}.
\end{equation}
Similarly we can rewrite \eqref{a21} as
$$
k\gamma_0<\lim_{m\to\infty}\frac{t_m-r_m}{t_m}+\lim_{m\to\infty}\frac{r_m-\tau_m}{t_m}\leq
\gamma_0.
$$
From this, using \eqref{a16}, \eqref{a17}, we obtain
$$
k\gamma_0<p_0+\lim_{m\to\infty}\frac{1-\frac{\tau_m}{r_m}}{\frac1{1-p_0}}\leq\gamma_0
$$
and, after simple calculations,
\begin{equation}\label{a23}
\frac{1-p_0}{1-k\gamma_0}<\lim_{m\to\infty}\frac{r_m}{\tau_m}\leq\frac{1-p_0}{1-\gamma_0}.
\end{equation}
Note that the inequality
\begin{equation}\label{a24}
\frac{1-p_0}{1-\gamma_0}<\frac1{1-k\gamma_0}
\end{equation}
holds if
\begin{equation}\label{a25}
1>k>\frac12(\gamma_0+1).
\end{equation}
Indeed, inequality \eqref{a24} is equivalent to
\begin{equation}\label{a26}
p_0+k\gamma_0-kp_0\gamma_0>\gamma_0.
\end{equation}
By \eqref{a11} we have $p_0>k\gamma_0$. Consequently, to prove
\eqref{a26} it is sufficient to show
$$
2k\gamma_0-kp_0\gamma_0>\gamma_0
$$
that is equivalent to $2k>kp_0+1$ because $\gamma_0>0.$ But
$kp_0<p_0\leq\gamma_0$, see \eqref{a11}, so \eqref{a25} implies
\eqref{a24}.

Let us take the sequence $\tilde \tau=\{\tau_n\}_{n\in\mathbb N}$
as a normalizing sequence. Let $\tilde X_{a,\tilde \tau}$ be a
maximal self-stable family such that $\tilde x\in\tilde
X_{a,\tilde\tau}$ and let $\Omega_{a,\tilde \tau}^X$ be the
corresponding pretangent spaces. The supposition of the theorem
which is being proved, implies that there is a subsequence
$\{n_k\}_{k\in\mathbb N}$ of natural numbers such that
$\Omega_{a,\tilde \tau}^X$ lies in tangent space $\Omega_{a,\tilde
\tau'}^X$, $\tilde\tau'=\{\tau_{n_k}\}_{k\in\mathbb N}$. Replacing
$n$ by $n_k$ in \eqref{a14}, \eqref{a15} we may assume that
$\Omega_{a,\tilde\tau'}^X=\Omega_{a,r}^X$, i.e. $\Omega_{a,\tilde
\tau}^X$ is tangent. Write $\mu:=p(\tilde x)$ where $p$ is the
metric identification mapping of $\tilde X_{a,\tilde \tau}$ and
where $\tilde x=\{x_n\}_{n\in\mathbb N}$ was defined by
\eqref{a14}, \eqref{a15}. It follows from \eqref{a24} and from the
supposition of the theorem that there is $\tilde
y=\{y_n\}_{n\in\mathbb N}\in\tilde X_{a,\tilde \tau}$ with
\begin{equation}\label{a27}
1=\tilde d_{\tilde\tau}(\tilde x,\tilde
a)\leq\frac{1-p_0}{1-\gamma_0} <\tilde d_{\tilde\tau}(\tilde
a,\tilde y)<\frac1{1-k\gamma_0}
\end{equation}
and such that $\mu$ lies between $\alpha:=p(\tilde a)$ and
$\xi:=p(\tilde y)$. Since all conditions of Lemma \ref{l:4.7} are
satisfied by the triple $\tilde x,\tilde y,\tilde a$, there is
$n_0\in\mathbb N$ such that
\begin{equation}\label{a28}
y_n\in\Gamma(a,l,\beta_0)
\end{equation}
for all $n\geq n_0$. Lemma \ref{p:2.6} and relation \eqref{a27}
imply that there is $\varepsilon_0>0$ such that the double
inequality
\begin{equation}\label{a29}
(1+\varepsilon_0)\tau_n\frac{1-p_0}{1-\gamma_0}<d(a,y_n)<\frac{\tau_n}{1-k\gamma}
\end{equation}
holds for all sufficiently large $n$. Moreover, it follows  from
\eqref{a22}, \eqref{a23} that there is $N(\varepsilon)\in\mathbb
N$ such that
\begin{equation}\label{a30}
\frac{\tau_n}{1-k\gamma_0}<t_n<(1+\varepsilon_0)\frac{\tau_n}{1-\gamma_0}
\end{equation}
and
\begin{equation}\label{a31}
\tau_n\frac{1-p_0}{1-k\gamma_0}<r_n<(1+\varepsilon_0)\frac{\tau_n(1-p_0)}{1-\gamma_0}
\end{equation}
for all $n\geq N(\varepsilon)$. The left inequality in \eqref{a29}
and the right one in \eqref{a31} give
$
r_n<d(a,y_n).
$
Similarly, from the right inequality in \eqref{a29} and  from the
left one in \eqref{a30} we obtain
$
d(a,y_n)<t_n.
$
Thus we have
\begin{equation}\label{a32}
d(a,y_n)\in]r_n,t_n[
\end{equation}
for sufficiently large $n$. In addition \eqref{a28} implies
\begin{equation}\label{a33}
d(a,y_n)\in R(X,l,\beta_0).
\end{equation}
 Relations \eqref{a32}, \eqref{a33}
contradict the definition of intervals $]r_n,t_n[$. Hence double
inequality \eqref{a10} does not hold and the theorem follows.
\end{proof}

\begin{example}\label{8:e2.20}
Let $X=\{re^{i\varphi}\in\mathbb C:r\in\mathbb R^+\text{ and
}\varphi\in[\theta_1,\theta_2]\},\ 0<\theta_1-\theta_2\leq\pi\}$
be the closed convex cone, $a=0$ a marked point of $X$ and $\tilde
r=\{r_n\}_{n\in\mathbb N}$ a normalizing sequence. Write for all
$n\in\mathbb N$
\begin{equation}\label{8:eq2.53}
z_n:=\begin{cases} r_ne^{i\theta_1}&\text{if } n \text{ is odd}\\
r_ne^{i\theta_3}&\text{if } n \text{ is even}
\end{cases}
\end{equation}
where a number $\theta_3$ belongs to $]\theta_1,\theta_2[$\,.

Let $\tilde X_{0,\tilde r}\ni\tilde z=\{z_n\}_{n\in\mathbb N}$ be
a maximal self-stable family with the corresponding pretangent
space $\Omega_{0,\tilde r}$. We claim that $\Omega_{0,\tilde r}$
is not tangent.

Indeed, suppose that $\Omega_{0,\tilde r}$ is tangent. Write
$\tilde r^{(1)}:=\{r_{2n+1}\}_{n\in\mathbb N}$ and $\tilde
r^{(2)}:=\{r_{2n}\}_{n\in\mathbb N}$. By Statement $(ii)$ of
Proposition \ref{1:p1.5} the families
$$
\tilde X_{0,\tilde r^{(1)}}:=\{\{x_{2n+1}\}_{n\in\mathbb
N}:\{x_n\}_{n\in\mathbb N}\in\tilde X_{0,\tilde r}\} $$ and $$
\tilde X_{0,\tilde r^{(2)}}:=\{\{x_{2n}\}_{n\in\mathbb
N}:\{x_n\}_{n\in\mathbb N}\in\tilde X_{0,\tilde r}\}
$$
are also maximal self-stable and corresponding spaces
$\Omega_{0,\tilde r^{(1)}}$, $\Omega_{0,\tilde r^{(2)}}$ are
tangent. Passing, if necessary, to subsequences we may suppose
that for every $\tilde x=\{x_n\}_{n\in\mathbb N}\in\tilde
X_{0,\tilde r}$ there are limits
\begin{equation}\label{8:eq2.54}
f^{(1)}(\tilde
x):=\lim_{n\to\infty}\frac{x_{2n+1}}{r_{2n+1}}\quad\text{and}\quad
f^{(2)}(\tilde x):=\lim_{n\to\infty}\frac{x_{2n}}{r_{2n}}.
\end{equation}
Let $\lfloor\tilde X_{0,\tilde r}\rfloor$ be a system of distinct
representatives of the factor space $\Omega_{0,\tilde r}$, i.e.,
for every $\tilde x\in\tilde X_{0,\tilde r}$ there is a unique
$\tilde y\in\lfloor\tilde X_{0,\tilde r}\rfloor$ such that $\tilde
d(\tilde x,\tilde y)=0$. Then, by Statement $(iii)$ of Lemma
\ref{8:l2.16}, the mappings
$$
\lfloor\tilde X_{0,\tilde r}\rfloor\ni\tilde x\longmapsto
f^{(1)}(\tilde x)\in X\quad \text{and}\quad \lfloor\tilde
X_{0,\tilde r}\rfloor\ni\tilde x\longmapsto f^{(2)}(\tilde x)\in X
$$
are isometries, where $f^{(1)}(\tilde x)$ and $f^{(2)}(\tilde x)$
are defined by \eqref{8:eq2.54}. Therefore, we have
$$
f^{(1)}(\tilde z)=e^{i\theta_1}\quad \text{and}\quad
f^{(2)}(\tilde z)=e^{i\theta_3}.
$$
 Hence, there is an isometry $\psi:X\to X$ such
that
\begin{equation}\label{8:eq2.55}
\psi(e^{i\theta_3})=e^{i\theta_1}.
\end{equation}
Note that $e^{i\theta_3}\in Int \,X$ and $e^{i\theta_1}\in\partial
X$. Consequently, equality \eqref{8:eq2.55} contradicts the
Brouwer Theorem on the invariance of the open sets.
\end{example}
This example  and Statement $(ii)$ of Proposition \ref{6:p5.2}
imply Proposition~\ref{p:1.12}.

\bigskip{\bf Acknowledgement.} The results
of this paper were partly produced during the visit of the first
author to the Mersin University (TURKEY) in February-April 2008
under the support of the T\"{U}B\.{I}TAK-Fellowships For Visiting
Scientists Programme.

\end{document}